\def\namedlabel#1#2{\begingroup
   \def\@currentlabel{#2}%
   \label{#1}\endgroup
}
\theoremstyle{plain}
\newtheorem{proposition}{Proposition}[section]
	\crefname{IntroductionClaim}{proposition}{propositions}
\newtheorem{lemma}{Lemma}[section]
\newtheorem{theorem}{Theorem}[section]
\newtheorem*{theorem*}{Theorem}
\theoremstyle{definition}
\newtheorem{definition}{Definition}[section]
\theoremstyle{remark}
\newtheorem{remark}{Remark}[section]
\numberwithin{equation}{section}
\newcounter{tmp}
\newcounter{constantCounter}\setcounter{constantCounter}{1}
\newcommand{\cste}{{C_{\arabic{constantCounter}}}}
\newcommand{\ncste}{\stepcounter{constantCounter}\cste}
\let\expandafter\oldproof\csname\string\proof\endcsname
\let\oldendproof\endproof
\renewenvironment{proof}[1][\proofname]{%
  \oldproof[\setcounter{constantCounter}{0}{#1}.]%
}{\setcounter{constantCounter}{0}\oldendproof}
\title[Asymptotic of steady vortex pairs in the lake equation]{Asymptotic of steady vortex pair\\in the lake equation}
\date{February 9, 2018}
\keywords{Lake model, singular vortex pair, desingularization, asymptotic behavior, energy maximization}
\subjclass[2010]%
\author{Justin Dekeyser}
\address[Justin Dekeyser]{Université~catholique~de~Louvain,
Département~de~Mathématique,\newline
2~Chemin~du~Cyclotron, 1348~Louvain-La-Neuve, \textsc{Belgium}}
\email[Justin Dekeyser]{Justin.Dekeyser@uclouvain.be}
\newcommand{\reals}{\mathbb{R}}
\newcommand{\plane}{\reals^2}
\newcommand{\integers}{\mathbb{N}}
\DeclareMathOperator{\du}{d}
\newcommand{\dif}{{\,\du}}
\newcommand{\scalarproduct}[3]{({#1}|{#2})_{#3}}
\newcommand{\norm}[2]{\Vert{#1}\Vert_{#2}}
\newcommand{\domain}{\Omega}
\newcommand{\depth}{\textrm{b}}
\newcommand{\closure}[1]{\overline{#1}}
\newcommand{\weakclosure}[1]{\closure{#1}^{\textrm{w}}}
\newcommand{\curve}{\mathcal{C}}
\newcommand{\nbislands}{m}
\newcommand{\lebesguemeasure}{\textrm{m}}
\newcommand{\mumeasure}{\dif\mu}
\newcommand{\gradient}{\nabla}
\newcommand{\flipgradient}{\gradient^\perp}
\newcommand{\largefunctionspace}{\mathcal{H}}
\newcommand{\functionspace}{\largefunctionspace_0}
\newcommand{\divergence}{\textrm{div}}
\newcommand{\curl}{\textrm{curl}}
\newcommand{\circulation}{\textrm{c}}
\newcommand{\staticflow}{\psi}
\newcommand{\positivepart}[1]{\left(#1\right)_+}
\newcommand{\negativepart}[1]{\left(#1\right)_-}
\newcommand{\vortex}{\zeta}
\newcommand{\vortexoperator}{\mathfrak{K}}
\newcommand{\rectifycirculation}{\mathfrak{H}}
\newcommand{\boundary}{\partial}
\newcommand{\laplacian}{-\Delta}
\newcommand{\greenlaplace}{\textrm{g}}
\newcommand{\rectifykernel}{\textrm{R}}
\newcommand{\regulargreenlaplace}{\textrm{H}}
\newcommand{\distance}{\textrm{d}}
\newcommand{\diameter}{\textrm{diam}}
\newcommand{\energy}{\textrm{E}}
\newcommand{\rearrangement}{\textrm{Rearg}}
\newcommand{\symmetrizearoundpoint}[2]{{#1}\sharp{#2}}
\newcommand{\vortexstrength}{\mathcal{S}}
\newcommand{\threshold}{\varsigma}
\newcommand{\positivefirstorderflow}[1]{\mathcal{T}^+_{#1}}
\newcommand{\negativefirstorderflow}[1]{\mathcal{T}^-_{#1}}
\newcommand{\error}{\Theta}
\newcommand{\distribution}[1]{\textrm{D}\left[{#1}\right]}
\newcommand{\scale}[1]{\mathfrak{s}{#1}}
\newcommand{\scalemumeasure}{\dif\mathfrak{s}\mu}
\begin{document}

\begin{abstract}
We bring new results in the study the asymptotic behavior of shrinking vortex pairs
obtained by maximization of the kinetic energy in a 2-dimensional lake over a class of rearrangements.
After improving recent results obtained for the first order asymptotic behavior of such pairs,
we focus on second order asymptotic properties. We show that among all points of maximal depth, the vortex locates according to
an adaptation of the Kirchoff-Routh function, and we study the asymptotic shape of optimal vortices. We also explore a relaxed maximization
problem with uniform constraints, for which we prove that the distribution consists of two vortex patches.
\end{abstract}
\maketitle

%\tableofcontents 

\section{Introduction}

\begingroup
\setcounter{tmp}{\value{theorem}}
\renewcommand\thetheorem{\Alph{theorem}}

The aim of this paper is to bring new results in the study of the asymptotic behavior of steady vortex pairs
in the lake equations. The \emph{steady} lake equations in their velocity-pressure formulation read as~\cite{CamassaHolmLevermore}
	\[ \begin{cases}
		\divergence\big( \depth u \big) = 0 ,&\text{on}\ \domain\subseteq\plane, \\
		(u\cdot\gradient)u=\gradient p ,&\text{on}\ \domain,\\
		\displaystyle \oint_{\curve_i}\scalarproduct{u}{\tau_i}{\plane} = \circulation_i,&\forall i\in\{0,\dots,\nbislands\} ,
	\end{cases} \]
where $\curve_0,\dots,\curve_\nbislands$ are the connected components of $\boundary\domain$ and $\circulation_i$ is the circulation along $\curve_i$.
The function $\depth:\domain\to\reals^+$ is referred as the \emph{depth function}.
Let
	\[ \energy(\vortex) = \frac{1}{2}\int_\domain|u|^2\mumeasure = \frac{1}{2}\int_\domain\vortex\vortexoperator\vortex\mumeasure ,\qquad \mumeasure(x)=\depth\dif x ,\]
be the kinetic energy of the system, where $\vortex=\depth^{-1}\curl(u)$ is the \emph{potential vortex field} and
$\vortexoperator\vortex$ is the \emph{stream function} defined as the solution of the elliptic problem:
	\[ \begin{cases}
		-\divergence\Big( \depth^{-1}\gradient\vortexoperator\vortex\Big) = \depth\vortex,\\
		\displaystyle \oint_{\curve_i}\scalarproduct{\depth^{-1}\flipgradient\vortexoperator\vortex}{\tau_i}{\plane} = \circulation_i, \quad\forall i\in\{0,\dots,\nbislands\} .
	\end{cases} \]
The velocity field $u$ is linked to the stream function by mean of the identity $u:=\depth^{-1}\flipgradient\vortexoperator\vortex$,
while the potential vortex satisfies the transport equation
	\[ \partial_t\vortex + \scalarproduct{u}{\gradient\vortex}{\plane} = 0 .\]
This suggests that the measure $\mu$ is the relevant invariant measure for the problem.
In fact, it is known that \emph{steady} weak solutions of the lake problem may be obtained by energy maximization over the set of all $\mu$-rearrangements
of some given potential vortex
(see~\cite{myArticle}, and \citelist{\cite{BurtonRearrangementOfFunctions}\cite{BurtonSteadyConfiguration}\cite{BurtonVariationalProblems}}
for similar questions for the 3D Euler equations with axis-symmetry). To state it in a more concrete fashion, let us consider
a given distribution function
	\[ D : \reals^+\to [0,\mu(\domain)] \]
such that
	\[ \int_{\reals^+}D(\lambda)\dif\lambda = 1 \]
and, for some $p>1$,
	\[ \int_{\reals^+}\lambda^p\ D(\lambda)\dif\lambda < +\infty .\]
We are interested in potential vortex fields $\vortex_\epsilon$ that satisfy the distributional conditions
	\begin{equation}\label{distributionConstraint}
		\mu\big(\{\positivepart{\vortex_\epsilon}\geq\lambda\} \big) = 
		\frac{\epsilon^2}{\delta}\,D\bigg( \frac{\epsilon^2\lambda}{\delta\tau\vortexstrength_\epsilon} \bigg),
		\qquad
		\mu\big(\{\negativepart{\vortex_\epsilon}\geq\lambda\} \big) = 
		\frac{\epsilon^2}{\delta}\,D\bigg( \frac{\epsilon^2\lambda}{\delta(1-\tau)\vortexstrength_\epsilon} \bigg) ,
	\end{equation}
where $\delta=\sup\limits_{\lambda>0}D(\lambda)$, $\tau\in[0,1]$ and $\vortexstrength_\epsilon>0$.
In these settings, the positive part $\positivepart{\vortex_\epsilon}$ and the negative part $\negativepart{\vortex_\epsilon}$
of the vortex would satisfy
	\[ \mu\big( \{\positivepart{\vortex_\epsilon}>0\} \big) \leq \epsilon^2,
		\qquad \mu\big(\{\negativepart{\vortex_\epsilon}>0\}\big) \leq \epsilon^2, \]
and we would also have the vortex-strength prescriptions
	\[ \int_\domain\positivepart{\vortex_\epsilon}\mumeasure = \tau\vortexstrength_\epsilon ,
		\qquad \int_\domain\negativepart{\vortex_\epsilon}\mumeasure = (1-\tau)\vortexstrength_\epsilon .\]
The following result is the starting point of our analysis:
\begingroup
\setcounter{theorem}{1} %assign desired value to theorem counter
\renewcommand\thetheorem{(\fnsymbol{theorem})}
\begin{theorem}[\cite{myArticle}*{Theorem~3.1}]\label{strongConcentrationReference}
Let $(\domain,\depth)$ be a lake with $\domain\subseteq\plane$ bounded and of class $C^1$, and $\depth\in C^1(\closure{\domain})$
with $\inf_\domain\depth>0$, or $\depth=\phi^\alpha$ with $\alpha>0$ and $\phi$ a regularization of the distance at the boundary.
Let $\big\{\vortex_\epsilon\in L^p(\domain,\mu):\epsilon>0\big\}$ be a family of solutions of the steady lake equations
obtained by energy maximization over constraint~\eqref{distributionConstraint}.

The accumulation points of $\big\{\positivepart{\vortex_\epsilon}:\epsilon>0\big\}$
and $\big\{\negativepart{\vortex_\epsilon}:\epsilon>0\big\}$ are Dirac masses
both centered around a point of maximal depth.
\end{theorem}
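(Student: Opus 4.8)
The plan is to pin down the exact logarithmic rate of the maximal kinetic energy, with its optimal constant, and then to deduce concentration and the location of the limits from two matching-bound arguments. Passing to a subsequence, assume $\vortexstrength_\epsilon$ converges (the statement being trivial for a zero or an infinite limit), say to a positive number; then the measures $\positivepart{\vortex_\epsilon}\mumeasure$ and $\negativepart{\vortex_\epsilon}\mumeasure$, of masses $m^+_\epsilon:=\tau\vortexstrength_\epsilon$ and $m^-_\epsilon:=(1-\tau)\vortexstrength_\epsilon$ and carried by sets of $\mu$-measure at most $\epsilon^2$, are bounded and have weak-$\star$ accumulation points $\nu^+,\nu^-$, with $m^\pm:=\nu^\pm(\domain)=\lim_\epsilon m^\pm_\epsilon$. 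Write $\depth_{\max}=\max_{\closure{\domain}}\depth$ and recall that $\vortexoperator\vortex_\epsilon^\pm(x)=\int_\domain G(x,y)\vortex_\epsilon^\pm(y)\mumeasure(y)$ modulo an $\epsilon$-independent bounded harmonic correction (from the circulation data, negligible at our order), where $G$ is the Green function of $-\divergence(\depth^{-1}\gradient\,\cdot\,)$, with $G(x,y)=\tfrac1{2\pi}\sqrt{\depth(x)\depth(y)}\log\tfrac1{|x-y|}+O(1)$ near the diagonal and $0\le G(x,y)\le\tfrac{\depth_{\max}}{2\pi}\log\tfrac{\diameter\domain}{|x-y|}+C$ on $\domain\times\domain$.

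\emph{Step 1 (sharp energy rate).} For a lower bound I would take as competitor a vortex whose positive and negative parts carry the distributions prescribed by~\eqref{distributionConstraint} and are supported in two small Euclidean balls, one centred at a point of maximal depth and the other at a fixed distance $d_0$ of it (a unique maximal point being handled by letting $d_0\to0$ at the very end). Using the diagonal asymptotics of $G$, the energy of this competitor is, for fixed $d_0$, at least $\tfrac1{4\pi}\bigl(\depth_{\max}(m^+)^2+\depth(x^-_{d_0})(m^-)^2\bigr)\log\tfrac1\epsilon-C(d_0)$ with $\depth(x^-_{d_0})\to\depth_{\max}$, so maximality and then $d_0\to0$ give $\liminf_\epsilon(\log\tfrac1\epsilon)^{-1}\energy(\vortex_\epsilon)\ge\tfrac{\depth_{\max}}{4\pi}\bigl((m^+)^2+(m^-)^2\bigr)$. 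For the reverse inequality I would drop the nonnegative cross term in $2\energy(\vortex_\epsilon)=\int_\domain\positivepart{\vortex_\epsilon}\vortexoperator\positivepart{\vortex_\epsilon}\mumeasure+\int_\domain\negativepart{\vortex_\epsilon}\vortexoperator\negativepart{\vortex_\epsilon}\mumeasure-2\int_\domain\positivepart{\vortex_\epsilon}\vortexoperator\negativepart{\vortex_\epsilon}\mumeasure$ and bound each self-interaction by the Riesz rearrangement inequality followed by a bathtub estimate — the extremal configuration being a round patch of $\mu$-measure $\epsilon^2$ — which yields
	\[ \int_\domain\vortex_\epsilon^\pm\,\vortexoperator\vortex_\epsilon^\pm\mumeasure\ \le\ \frac{\depth_{\max}}{2\pi}\Bigl(\int_\domain\vortex_\epsilon^\pm\mumeasure\Bigr)^{\!2}\log\frac1\epsilon+C . \]
Comparing the two estimates forces all inequalities to saturate: $\int_\domain\vortex_\epsilon^\pm\vortexoperator\vortex_\epsilon^\pm\mumeasure=\tfrac{\depth_{\max}}{2\pi}(m^\pm)^2\log\tfrac1\epsilon\,(1+o(1))$ and $\int_\domain\positivepart{\vortex_\epsilon}\vortexoperator\negativepart{\vortex_\epsilon}\mumeasure=o(\log\tfrac1\epsilon)$.

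\emph{Step 2 (each part is a single Dirac mass).} If $\nu^+$ were not a Dirac mass, there would be a ball $B$ with $\nu^+(\boundary B)=0$ and $\min\{\nu^+(B),\,m^+-\nu^+(B)\}\ge\eta_0>0$. I would split $\positivepart{\vortex_\epsilon}$ into its restrictions to $B$, to a slightly larger concentric ball minus $B$, and to the exterior of that larger ball, the annulus chosen to carry vanishing $\nu^+$-mass. The first and third pieces eventually sit at a fixed positive distance, so their mutual interaction is $O(1)$; the middle piece has vanishing mass and $\mu$-measure $\le\epsilon^2$, hence contributes $o(\log\tfrac1\epsilon)$; and applying the self-interaction bound of Step 1 to the two main pieces, of masses $a_\epsilon\to\nu^+(B)$ and $m^+_\epsilon-a_\epsilon$, together with $a^2+(m^+-a)^2\le(m^+)^2-2\eta_0(m^+-\eta_0)$, bounds $\int_\domain\positivepart{\vortex_\epsilon}\vortexoperator\positivepart{\vortex_\epsilon}\mumeasure$ by strictly less than the value found in Step 1 — a contradiction. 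Hence $\nu^+=m^+\delta_{x^+}$, and symmetrically $\nu^-=m^-\delta_{x^-}$.

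\emph{Step 3 (the centres lie at maximal depth) and the main obstacle.} Since $\positivepart{\vortex_\epsilon}\mumeasure\rightharpoonup m^+\delta_{x^+}$, the mass of $\positivepart{\vortex_\epsilon}$ outside any ball $B_\rho(x^+)$ tends to $0$; applying the self-interaction bound localized to $B_\rho(x^+)$, where $\depth\le\depth(x^+)+\omega(\rho)$ with $\omega$ the modulus of continuity of $\depth$, and letting $\epsilon\to0$ and then $\rho\to0$, gives $\limsup_\epsilon(\log\tfrac1\epsilon)^{-1}\int_\domain\positivepart{\vortex_\epsilon}\vortexoperator\positivepart{\vortex_\epsilon}\mumeasure\le\tfrac{\depth(x^+)}{2\pi}(m^+)^2$. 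Against the equality of Step 1 this forces $\depth(x^+)\ge\depth_{\max}$, hence $\depth(x^+)=\depth_{\max}$, and symmetrically $\depth(x^-)=\depth_{\max}$; in particular the centres are interior points, which also disposes of the case $\depth=\phi^\alpha$. I expect the crux to be exactly the Green-function input of Steps 1 and 3 in that degenerate regime: controlling, uniformly up to $\boundary\domain$, the Green function of a uniformly elliptic but boundary-degenerate operator — both its precise diagonal singularity and a global upper bound with the constant $\depth_{\max}/(2\pi)$ — whereas a secondary technicality is to run the rearrangement and bathtub estimates with only the $L^p$ information supplied by a general distribution $D$, rather than an $L^\infty$ bound on $\vortex_\epsilon$.
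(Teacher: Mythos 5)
Note first that the paper does not prove this theorem: it is imported verbatim from \cite{myArticle}*{Theorem~3.1}, and the mechanism of the original proof is recorded in \cref{weakConcentration} — an energy comparison showing that almost all of the vortex strength must live on the superlevel sets $D^\kappa_\epsilon$, $U^\kappa_\epsilon$ of the first order flows $\positivefirstorderflow{\vortex_\epsilon}$, $\negativefirstorderflow{\vortex_\epsilon}$, together with a diameter bound and a depth estimate for those sets. Your route is genuinely different in execution: you extract weak-$\star$ limits of the (normalized) measures $\positivepart{\vortex_\epsilon}\mumeasure$, $\negativepart{\vortex_\epsilon}\mumeasure$, pin down the sharp rate $\energy_\epsilon(\vortex_\epsilon)=\tfrac{\sup_\domain\depth}{4\pi}\big((m^+)^2+(m^-)^2\big)\log\tfrac1\epsilon\,(1+o(1))$ by matching a two-ball competitor against a Hölder upper bound, and then rule out splitting of the limit measure by strict subadditivity of $a\mapsto a^2$. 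Both arguments rest on the same hard analytic input, namely the expansion $\vortexoperator\vortex(x)=\tfrac{\depth(x)}{2\pi}\int_\domain\log\tfrac{\diameter(\domain)}{|x-y|}\vortex\mumeasure+\int_\domain F(x,y)\vortex\mumeasure$ with $F$ uniformly bounded (the ``continuous lake'' hypothesis, proved in the appendix of \cite{myArticle}); you correctly identify this as the crux, especially in the degenerate case $\depth=\phi^\alpha$. What the paper's route buys is quantitative information — diameter $\epsilon^\varsigma$ of the concentration sets and the residual-mass estimate of order $\vortexstrength_\epsilon/\log\tfrac1\epsilon$, which is precisely what \cref{strongConcentration} later upgrades — whereas your dichotomy argument delivers exactly the stated weak-$\star$ conclusion by a softer, more self-contained mechanism.

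Three points need repair, none fatal. First, the Riesz rearrangement inequality is not available for the weighted measure $\mu$ (the paper only proves an asymptotic version, \cref{RieszSobolev}, and only after concentration is known); but you do not need it for the upper bound: Hölder's inequality combined with the $L^p_\mu$ constraint~\eqref{distribConstraint} — exactly the computation behind \cref{upperBoundFirstOrder} — already gives $\int_\domain\vortex^\pm_\epsilon\,\vortexoperator\vortex^\pm_\epsilon\mumeasure\le\tfrac{\sup_\domain\depth}{2\pi}(m^\pm_\epsilon)^2\log\tfrac1\epsilon+C\vortexstrength_\epsilon^2$, and the same localized estimate drives Steps~2 and~3. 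Second, the cross term is not nonnegative: the kernel of $\vortexoperator$ is $\depth(x)\greenlaplace(x,y)$ plus the bounded but signless corrections $\rectifykernel$ and the circulation terms, so you only get $\int_\domain\positivepart{\vortex_\epsilon}\vortexoperator\negativepart{\vortex_\epsilon}\mumeasure\ge-C\vortexstrength_\epsilon^2$ — which is all the matching argument requires, but should be stated as such. Third, rather than assuming $\vortexstrength_\epsilon$ converges to a positive limit (the degenerate limits are not ``trivial'' for the unnormalized measures), divide through by $\vortexstrength_\epsilon$: every estimate in your scheme is homogeneous of degree two in $\vortexstrength_\epsilon$ with errors $O(\vortexstrength_\epsilon^2)$ against leading terms $\vortexstrength_\epsilon^2\log\tfrac1\epsilon$, so the argument applies to the normalized measures without any assumption on the behavior of $\vortexstrength_\epsilon$.
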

\endgroup
\setcounter{theorem}{0} %assign desired value to theorem counter
In order to prove \cref{strongConcentrationReference}, the author used the following integral kernel representation
of the stream function:
	\[ \vortexoperator\vortex(x)
		= \frac{\depth(x)}{2\pi}\int_\domain\frac{\diameter(\domain)}{|x-y|}\vortex(y)\mumeasure(y)
			+ \int_\domain F(x,y)\vortex(y)\mumeasure(y) ,\]
where the function $F$ is a correction function
depending on $\depth$ and the circulation conditions.
Relying on the integral kernel expansion of the stream function,
the vortex pair was proved to be of the form $\vortex_\epsilon=\vortex_\epsilon\chi_{D_\epsilon} + \vortex_\epsilon\chi_{\domain\setminus D_\epsilon}$,
where $\vortex_\epsilon\chi_{D_\epsilon}$ tends to a singular vortex pair in the limit $\epsilon\to 0$, while
	\[ \int_{\domain\setminus D_\epsilon}\vortex_\epsilon\mumeasure \sim \vortexstrength_\epsilon\,\frac{1}{\log\frac{1}{\epsilon}} .\]
We recall that the leading term in the energy $\energy_\epsilon(\vortex_\epsilon)$ grows like
	\[ \norm{\vortexoperator\vortex_\epsilon}{\infty}\int_\domain\vortex_\epsilon\mumeasure \sim \vortexstrength_\epsilon^2\log\frac{1}{\epsilon} ,\]
while the relevant second order terms are of order $\vortexstrength_\epsilon^2$.
Thus, if one wants to obtained a more accurate picture of the asymptotic behavior of the pair by
going beyond the leading term, one should try to obtain estimates for the $\vortexstrength_\epsilon^2$-order terms.

Unfortunately, such estimates were hard to derive with the purely integral-comparison techniques previously used.
Indeed, we could naively try to compare the optimal energy $\energy_\epsilon(\vortex_\epsilon)$
with the energy of some nearly-spherical competitor $(\vortex_\epsilon\chi_{D_\epsilon})^\star+\vortex_\epsilon\chi_{\domain\setminus D_\epsilon}$
obtained by some symmetrization technique. Although such competitor would be suitable for estimations,
the error we commit is estimated as
	\[ \energy_\epsilon(\vortex_\epsilon) - \energy_\epsilon(\vortex_\epsilon^\star)
		\lesssim \norm{\vortexoperator\vortex}{\infty}\ \int_{\domain\setminus D}\vortex\mumeasure
		\sim \Bigg( \int_\domain\vortex\mumeasure\Bigg)^2 \sim \vortexstrength_\epsilon^2 . \]
Since the error of such a comparison process could not be estimated of lower order than $\vortexstrength_\epsilon^2$, and because
the second order relevant terms in the energy expansion $\energy_\epsilon(\vortex_\epsilon)$ are of order $\vortexstrength_\epsilon^2$,
the analysis could not be performed further.

\subsection*{Statement of the results}
In view of the above discussion, a natural strategy is to improve the concentration result obtained in~\cite{myArticle}.
This is the first step of the paper:
\begin{theorem}\label{introductionEssentialConcentration}
In the settings of \cref{strongConcentrationReference}, there exists $\varsigma\in(0,1]$
such that
	\[ \limsup_{\epsilon\to0}\frac{\diameter\big(\{\positivepart{\vortex_\epsilon}>0\}\big)}{\epsilon^\varsigma} < +\infty ,\]
and
	\[ \limsup_{\epsilon\to0}\frac{\diameter\big(\{\negativepart{\vortex_\epsilon}>0\}\big)}{\epsilon^\varsigma} < +\infty .\]
\end{theorem}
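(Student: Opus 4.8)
The plan is to bootstrap from the weak concentration already known (Theorem~\ref{strongConcentrationReference}): we know that $\positivepart{\vortex_\epsilon}$ and $\negativepart{\vortex_\epsilon}$ converge weakly-$*$ to Dirac masses at a point $x_0$ of maximal depth, and that the support of each has $\mu$-measure $\le\epsilon^2$. Writing $\vortex_\epsilon^+=\positivepart{\vortex_\epsilon}$ supported on a set $A_\epsilon^+$ with $\mu(A_\epsilon^+)\le\epsilon^2$, the task is to upgrade ``diameter $\to0$'' to a polynomial rate $\diameter(A_\epsilon^+)\lesssim\epsilon^\varsigma$. First I would record the optimality condition coming from energy maximization over the rearrangement class: the maximizer $\vortex_\epsilon$ is a rearrangement of a fixed profile, and by the Burton-type bathtub principle $\vortex_\epsilon^+$ is, on its support, a nonincreasing function of the stream function $\vortexoperator\vortex_\epsilon$; equivalently, there is a Lagrange multiplier (a ``flux constant'') $\mu_\epsilon^+$ such that $A_\epsilon^+$ is, up to the rearrangement profile, a superlevel set $\{\vortexoperator\vortex_\epsilon>\mu_\epsilon^+\}$ intersected appropriately, and similarly $A_\epsilon^-=\{\vortexoperator\vortex_\epsilon<-\mu_\epsilon^-\}$-type set for the negative part. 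So controlling the diameter of the support reduces to controlling the geometry of superlevel sets of $\vortexoperator\vortex_\epsilon$.

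Next I would exploit the kernel representation
\[ \vortexoperator\vortex_\epsilon(x)=\frac{\depth(x)}{2\pi}\int_\domain\frac{\diameter(\domain)}{|x-y|}\vortex_\epsilon(y)\mumeasure(y)+\int_\domain F(x,y)\vortex_\epsilon(y)\mumeasure(y), \]
where $F$ is bounded (or at worst has a locally integrable, much milder singularity than the Newtonian one) on compact subsets away from $\boundary\domain$; since the mass concentrates near the interior point $x_0$ of maximal depth, for small $\epsilon$ the whole action takes place on a fixed compact neighbourhood of $x_0$, so the $F$-term contributes $O(\vortexstrength_\epsilon)$ uniformly and is harmless at the scale we care about. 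The leading term is a weighted Newtonian potential of the signed measure $\vortex_\epsilon\mumeasure$, whose positive and negative parts each have total mass $\sim\vortexstrength_\epsilon$ and are each spread over a set of measure $\le\epsilon^2$. The key estimate is then a standard potential-theoretic bound: a positive measure of mass $M$ carried by a set of Lebesgue measure $\le\epsilon^2$ produces a logarithmic potential whose oscillation over that set is controlled, and more importantly, at a point $x\in A_\epsilon^+$ whose distance to the bulk of the positive mass is $r$, the potential is $\lesssim M\log(1/r)$ with matching lower bounds when $r$ is comparable to $\diameter(A_\epsilon^+)$. Comparing the value of $\vortexoperator\vortex_\epsilon$ at a ``far'' point of $A_\epsilon^+$ (distance $\sim\diameter(A_\epsilon^+)$ from the centroid) against a ``near'' point, and using that both exceed the common flux constant $\mu_\epsilon^+$, forces $\log(1/\diameter(A_\epsilon^+))$ to be bounded below by a positive constant times $\log(1/\epsilon)$ — up to lower-order contributions from the negative vortex (which is at a controlled distance, since both blobs sit near $x_0$, and whose influence on $A_\epsilon^+$ is Lipschitz at scale $\gg\diameter$) and from $F$. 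That inequality is exactly $\diameter(A_\epsilon^+)\lesssim\epsilon^\varsigma$ for some $\varsigma\in(0,1]$; the same argument gives the bound for $A_\epsilon^-$.

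I expect the main obstacle to be making the ``flux constant'' comparison rigorous when the two signed blobs are close: a priori $\vortexstrength_\epsilon$ could itself grow (it is not normalized), so one must track how $\mu_\epsilon^+$ and $\mu_\epsilon^-$ compare to $\vortexstrength_\epsilon\log(1/\epsilon)$, and show that the cross-interaction energy between $A_\epsilon^+$ and $A_\epsilon^-$ does not conspire to flatten the potential on $A_\epsilon^+$ (which would happen only if the two blobs interpenetrated at scale $\diameter$, contradicting the weak separation one can extract from Theorem~\ref{strongConcentrationReference} plus the sign structure). A clean way around this is to first prove, by a soft argument, that $\distance(A_\epsilon^+,A_\epsilon^-)$ does not go to zero faster than any power we need — or, more robustly, to run the whole potential estimate for the measure $\vortex_\epsilon^+\mumeasure$ alone, absorbing $-\vortex_\epsilon^-\mumeasure$ and the $F$-term into a single ``background'' potential $g_\epsilon$ that is Lipschitz on a neighbourhood of $x_0$ with Lipschitz constant $O(\vortexstrength_\epsilon)$, so that on $A_\epsilon^+$ one has $\vortexoperator\vortex_\epsilon = \tfrac{\depth}{2\pi}(\text{self-potential of }\vortex_\epsilon^+) + g_\epsilon$ with $\mathrm{osc}_{A_\epsilon^+}g_\epsilon \lesssim \vortexstrength_\epsilon\diameter(A_\epsilon^+)$, which is negligible compared with the $\vortexstrength_\epsilon\log(1/\diameter)$ self-term. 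Once the argument is set up this way it is purely local and the depth function enters only through the harmless factor $\depth(x)\in[c,C]$ near $x_0$, so the rate $\varsigma$ that comes out depends only on the ratio of the relevant constants and is automatically in $(0,1]$.
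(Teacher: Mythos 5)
Your overall strategy --- Burton's bathtub principle to identify $\{\positivepart{\vortex_\epsilon}>0\}$ with a superlevel set $\{\vortexoperator\vortex_\epsilon+\rectifycirculation_\epsilon\vortex_\epsilon\geq\gamma_\epsilon^+\}$ of the stream function, followed by a potential-theoretic estimate on that superlevel set --- is the right one, and it is essentially the Turkington/Elcrat--Miller route the paper takes. But the decisive quantitative step is missing. Knowing that every point of the support sees a stream function value $\geq\gamma_\epsilon^+$ gives nothing unless you also prove a \emph{lower bound} on the flux constant of the form
\[ \gamma_\epsilon^+ \;\geq\; \frac{\sup_\domain\depth}{2\pi}\,\tau\vortexstrength_\epsilon\log\frac{1}{\epsilon}\,\big(1-o(1)\big) ; \]
only then does the upper bound for the logarithmic potential at a point sitting at distance $r$ from the bulk of the mass force $\log(1/r)\gtrsim\log(1/\epsilon)$. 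Your ``far point versus near point, both exceeding the common flux constant'' comparison is vacuous as stated: both values exceeding $\gamma_\epsilon^+$ is consistent with $\gamma_\epsilon^+=0$. You correctly flag this as the main obstacle, but you offer no mechanism to resolve it. The paper's proof is devoted almost entirely to this point: it sets $\phi=\positivepart{\vortexoperator\vortex_\epsilon+\rectifycirculation_\epsilon\vortex_\epsilon-\gamma_\epsilon^+}$, uses the measure constraint $\mu(\{\vortex_\epsilon>0\})\leq\epsilon^2$ together with a Sobolev embedding and H\"older's inequality to get $\norm{\phi}{\functionspace}\leq C\vortexstrength_\epsilon$ (with additional bookkeeping for the harmonic components $\staticflow_0,\dots,\staticflow_\nbislands$ created by the islands), and then feeds this back into the energy identity $\int_\domain\depth^{-1}|\gradient\phi|^2=\int_\domain\phi\,\vortex_\epsilon\mumeasure-\cdots$ to extract the lower bound on $\gamma_\epsilon^+$ up to an error $o\big(\vortexstrength_\epsilon\log\frac{1}{\epsilon}\big)$; the inclusion of the support in the small set $D^\kappa_\epsilon$ of \cref{weakConcentration} then follows.

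A second, related gap is the treatment of the negative blob. You absorb its potential into a ``background'' $g_\epsilon$ claimed to be Lipschitz near the positive blob with constant $O(\vortexstrength_\epsilon)$; the true constant is $O(\vortexstrength_\epsilon/d)$ with $d=\distance\big(\{\positivepart{\vortex_\epsilon}>0\},\{\negativepart{\vortex_\epsilon}>0\}\big)$, and at this stage no lower bound on $d$ is available: the separation estimate (\cref{repulsionLog}) is proved \emph{later in the paper} and \emph{uses} the essential concentration you are trying to establish, so the ``soft pre-separation argument'' you invoke risks circularity. The paper's $H^1$ estimate on $\phi$ sidesteps this entirely, since it never needs to localize the negative part relative to the positive one.
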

In comparison with \cref{strongConcentrationReference}, \cref{introductionEssentialConcentration} claims that concentration occurs essentially is two large balls
of decreasing radius. With this result, we are able to study the second order behavior of the vortex pair:
\begin{theorem}\label{introductionSecondOrder}
In the settings of \cref{strongConcentrationReference}, let $F:\domain\times\domain\to\reals$ be the function defined in the Green's function expansion,
and let $G:\domain\times\domain\to\reals$ be the function defined by
	\[ G(x,y) = \frac{\depth(x)+\depth(y)}{2\pi}\log\frac{\diameter(\domain)}{|x-y|} .\]
Assume that $\depth$ admits at least two maximizers $X,Y$ in $\domain$ such that
	\[ \liminf_{\epsilon\to 0}\distance\big( \{\positivepart{\vortex_\epsilon}>0 \}, X \big) = 0 ,
		\qquad \liminf_{\epsilon\to 0}\distance\big( \{\negativepart{\vortex_\epsilon}>0 \}, Y \big) = 0 . \]
Then $(X,Y)$ minimizes the function
	\begin{align*}
		(x,y)&\in\domain\times\domain
		\\&\mapsto \bigg[\tau(1-\tau)G(x,y)
		-\tau^2F(x,x) - (1-\tau)^2F(y,y) +\tau(1-\tau)\big(F(x,y)+F(y,x)\big)\bigg]
	\end{align*}
over the set $\big(\domain\cap\{\depth=\sup_\domain\depth\}\big)\times\big(\domain\cap\{\depth=\sup_\domain\depth\}\big)$.
\end{theorem}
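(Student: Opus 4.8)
The plan is to expand the energy one order beyond the leading term and then argue by comparison, as for the Kirchhoff--Routh function. Fix a subsequence (not relabelled) realizing both liminf's, so that $\distance(\{\positivepart{\vortex_\epsilon}>0\},X)\to0$ and $\distance(\{\negativepart{\vortex_\epsilon}>0\},Y)\to0$; by \cref{strongConcentrationReference} and \cref{introductionEssentialConcentration} the sets $\{\positivepart{\vortex_\epsilon}>0\}$, $\{\negativepart{\vortex_\epsilon}>0\}$ then have diameter $O(\epsilon^\threshold)$, collapse to $X$ resp.\ $Y$, and stay within $O(\epsilon^\threshold)$ of $\{\depth=\sup_\domain\depth\}$. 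Put $\int_\domain\positivepart{\vortex_\epsilon}\mumeasure=\tau\vortexstrength_\epsilon$ and $\int_\domain\negativepart{\vortex_\epsilon}\mumeasure=(1-\tau)\vortexstrength_\epsilon$, and let $\greenlaplace(x,y)=\tfrac{\depth(x)}{2\pi}\log\tfrac{\diameter(\domain)}{|x-y|}+F(x,y)$ be the kernel of $\vortexoperator$ from the Green's function expansion; it is symmetric, since $\vortexoperator$ is self-adjoint for the measure $\mu$. Writing each $\iint$ for integration over $\domain\times\domain$ against $\mumeasure(x)\mumeasure(y)$,
\begin{align*}
\energy_\epsilon(\vortex_\epsilon) &= \tfrac12\iint\greenlaplace(x,y)\positivepart{\vortex_\epsilon}(x)\positivepart{\vortex_\epsilon}(y)
+\tfrac12\iint\greenlaplace(x,y)\negativepart{\vortex_\epsilon}(x)\negativepart{\vortex_\epsilon}(y)\\
&\quad-\iint\greenlaplace(x,y)\positivepart{\vortex_\epsilon}(x)\negativepart{\vortex_\epsilon}(y),
\end{align*}
and I would treat the three integrals separately. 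As $\positivepart{\vortex_\epsilon},\negativepart{\vortex_\epsilon}$ accumulate at $X\neq Y$, where $\greenlaplace$ is continuous, the cross integral equals $\tau(1-\tau)\vortexstrength_\epsilon^2\greenlaplace(X,Y)+o(\vortexstrength_\epsilon^2)$. For each self-integral, split $\greenlaplace$: since $F$ is continuous up to the diagonal (the logarithmic singularity sitting entirely in the first term), the $F$-part of $\tfrac12\iint\greenlaplace\positivepart{\vortex_\epsilon}\positivepart{\vortex_\epsilon}$ equals $\tfrac12\tau^2\vortexstrength_\epsilon^2F(X,X)+o(\vortexstrength_\epsilon^2)$, and likewise with $Y$; in the logarithmic part one replaces $\depth(x)$ and the two densities of $\mumeasure$ by $\sup_\domain\depth$, committing an error $O(\epsilon^\threshold\vortexstrength_\epsilon^2\log\tfrac1\epsilon)=o(\vortexstrength_\epsilon^2)$ because the oscillation of $\depth$ on the supports is $O(\epsilon^\threshold)$. \textbf{This last replacement is the crucial point} and is exactly where the quantitative concentration of \cref{introductionEssentialConcentration} enters: the merely qualitative Dirac concentration of \cref{strongConcentrationReference} leaves an error of the non-negligible order $\vortexstrength_\epsilon^2$, the obstruction explained in the introduction.

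After this reduction the two logarithmic self-energies are, up to $o(\vortexstrength_\epsilon^2)$, fixed multiples of the translation-invariant quantities $\iint\log\tfrac{\diameter(\domain)}{|x-y|}\positivepart{\vortex_\epsilon}(x)\positivepart{\vortex_\epsilon}(y)\dif x\dif y$ and its counterpart for $\negativepart{\vortex_\epsilon}$. By the Riesz rearrangement inequality these are bounded above by their values at the symmetric decreasing rearrangement, and since \eqref{distributionConstraint} prescribes the distributions of $\positivepart{\vortex_\epsilon},\negativepart{\vortex_\epsilon}$, the resulting bound is a quantity $\mathbf{A}_\epsilon$ that depends on $\epsilon$ alone, not on the positions of the vortices. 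Hence
\[ \energy_\epsilon(\vortex_\epsilon)\le\mathbf{A}_\epsilon+\vortexstrength_\epsilon^2\Bigl(\tfrac12\tau^2F(X,X)+\tfrac12(1-\tau)^2F(Y,Y)-\tau(1-\tau)\greenlaplace(X,Y)\Bigr)+o(\vortexstrength_\epsilon^2). \]
I would then test $\vortex_\epsilon$ against competitors anchored at arbitrary maximal-depth points. Let $(X',Y')\in(\domain\cap\{\depth=\sup_\domain\depth\})^2$; if $X'=Y'$ then $G(X',Y')=+\infty$, so for $\tau\in(0,1)$ the functional of the statement is $+\infty$ at $(X',Y')$ and nothing is to be proved, whence I may assume $X'\neq Y'$. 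Take $\tilde\vortex_\epsilon=\tilde\vortex_\epsilon^+-\tilde\vortex_\epsilon^-$, where $\tilde\vortex_\epsilon^+$ is the Euclidean radially decreasing function centred at $X'$ with the $\mu$-distribution prescribed for $\positivepart{\vortex_\epsilon}$, and $\tilde\vortex_\epsilon^-$ the same centred at $Y'$; for small $\epsilon$ their supports are disjoint, so $\tilde\vortex_\epsilon$ is admissible. Running the expansion for $\tilde\vortex_\epsilon$, its logarithmic self-energies now \emph{attain} $\mathbf{A}_\epsilon+o(\vortexstrength_\epsilon^2)$ — being, up to the $o(\vortexstrength_\epsilon^2)$ distortion $\mumeasure\leftrightarrow\sup_\domain\depth\dif x$ on its support, its own symmetric rearrangement — with the same $\mathbf{A}_\epsilon$, so
\[ \energy_\epsilon(\tilde\vortex_\epsilon)=\mathbf{A}_\epsilon+\vortexstrength_\epsilon^2\Bigl(\tfrac12\tau^2F(X',X')+\tfrac12(1-\tau)^2F(Y',Y')-\tau(1-\tau)\greenlaplace(X',Y')\Bigr)+o(\vortexstrength_\epsilon^2). \]

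Since $\vortex_\epsilon$ maximizes $\energy_\epsilon$ over the admissible class, $\energy_\epsilon(\vortex_\epsilon)\ge\energy_\epsilon(\tilde\vortex_\epsilon)$. Subtracting the two displays, cancelling $\mathbf{A}_\epsilon$, dividing by $\vortexstrength_\epsilon^2$ and letting $\epsilon\to0$ gives
\[ \tfrac12\tau^2F(X,X)+\tfrac12(1-\tau)^2F(Y,Y)-\tau(1-\tau)\greenlaplace(X,Y)\ \ge\ \tfrac12\tau^2F(X',X')+\tfrac12(1-\tau)^2F(Y',Y')-\tau(1-\tau)\greenlaplace(X',Y'). \]
Now symmetry of $\greenlaplace$ gives $2\greenlaplace(x,y)=G(x,y)+F(x,y)+F(y,x)$; inserting this, each side becomes $-\tfrac12$ times the functional of the statement, and the inequality reads $W(X,Y)\le W(X',Y')$, with $W$ that functional. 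As $(X',Y')$ was an arbitrary pair of maximal-depth points, $(X,Y)$ minimizes $W$ over $(\domain\cap\{\depth=\sup_\domain\depth\})^2$, as asserted. The two delicate ingredients are the $o(\vortexstrength_\epsilon^2)$ estimates: the polynomial localization (at rate $\epsilon^\threshold$) of the supports near $\{\depth=\sup_\domain\depth\}$, which makes the replacement $\depth\mapsto\sup_\domain\depth$ licit to second order, and the verification that the radial competitor reproduces the position-independent energy $\mathbf{A}_\epsilon$ to the same precision.
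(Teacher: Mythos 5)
Your overall strategy --- compare $\vortex_\epsilon$ with a radially symmetric competitor anchored at an arbitrary pair of interior maximizers of $\depth$, absorb the leading logarithmic self-energies into a position-independent quantity via the Riesz rearrangement inequality, and read off the second-order $F$- and $G$-contributions by continuity --- is the same as the paper's (the body version of the statement is \cref{accurateRule}, proved by testing against $\symmetrizearoundpoint{X^\star}{\positivepart{\vortex_\epsilon}}-\symmetrizearoundpoint{Y^\star}{\negativepart{\vortex_\epsilon}}$), and your final algebra identifying $-\tfrac{1}{2}$ times the stated functional is correct. The gap is at the step you yourself flag as crucial. You replace $\depth$ by $\sup_\domain\depth$ in the logarithmic self-energies and charge an error $O\big(\epsilon^\threshold\vortexstrength_\epsilon^2\log\frac{1}{\epsilon}\big)$ ``because the oscillation of $\depth$ on the supports is $O(\epsilon^\threshold)$''. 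But the relevant quantity is not the oscillation of $\depth$ \emph{over} the support (which is indeed $O\big(\omega_{\depth}(\epsilon^\threshold)\big)$ by \cref{introductionEssentialConcentration}); it is the gap $\sup_\domain\depth-\depth$ \emph{on} the support, and for that the hypotheses give only $o(1)$ with no rate: \cref{strongConcentrationReference} and the assumption $\liminf_{\epsilon\to0}\distance\big(\{\positivepart{\vortex_\epsilon}>0\},X\big)=0$ say that the support approaches a maximizer, not how fast. An error of size $o(1)\cdot\vortexstrength_\epsilon^2\log\frac{1}{\epsilon}$ is not $o(\vortexstrength_\epsilon^2)$, so as written neither your upper bound $\energy_\epsilon(\vortex_\epsilon)\le\mathbf{A}_\epsilon+\cdots$ nor the claim that the competitor reproduces the \emph{same} $\mathbf{A}_\epsilon$ is established; the same unproven rate already appears when you assert that the supports ``stay within $O(\epsilon^\threshold)$ of $\{\depth=\sup_\domain\depth\}$''.

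The paper closes exactly this hole with the asymptotic Riesz--Sobolev inequality, \cref{RieszSobolev}: instead of freezing the kernel coefficient, one dilates by $r=\sqrt{\depth(X)/\depth(X^\star)}$ so that the Lebesgue distributions of $\positivepart{\vortex_\epsilon}$ and of its symmetrization about the exact maximizer $X^\star$ agree up to $\omega_{\depth}(\epsilon^\threshold)$-errors. Because the kernel is logarithmic, the dilation enters only through the additive term $\log\sqrt{\depth(X)/\depth(X^\star)}$, contributing $o(\vortexstrength_\epsilon^2)$ (and in fact a nonpositive quantity, since $X^\star$ exactly maximizes $\depth$), rather than a multiplicative error on the $\vortexstrength_\epsilon^2\log\frac{1}{\epsilon}$ term. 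If you route your ``crucial replacement'' through \cref{RieszSobolev} --- equivalently, compare directly with the $\mu$-rearranged competitor $\symmetrizearoundpoint{X^\star}{\positivepart{\vortex_\epsilon}}-\symmetrizearoundpoint{Y^\star}{\negativepart{\vortex_\epsilon}}$ as the paper does --- the remainder of your argument goes through.
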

Note that when $\depth\equiv 1$, every point in $\domain$ is a point of maximal depth.
Moreover, when $\depth\equiv 1$, we have $\gradient\depth=0$
and the correction function $F$ in our Green's function expansion turns out to be the Kirchoff-Routh function,
which rules the motion of singular vortex pairs in the 2D Euler equations~\citelist{\cite{Lin1}\cite{Lin2}}.
Although the function $\rectifykernel$ may be hard to compute in the general setting of a lake $(\domain,\depth)$,
\cref{introductionSecondOrder} fills a conceptual gap between the lake model and the 2D Euler equations.

We are also in position to investigate the asymptotic shape of the optimal vortices.
Relying on tools from standard potential theory and on an \emph{asymptotic version of the Riesz-Sobolev rearrangement inequality},
we prove the following reuslt:
\begin{theorem}\label{introductionShape}
In the setting of \cref{strongConcentrationReference}, and if $\domain$ satisfies an
interior cone condition, then there exists translations $\big\{T_\epsilon:\plane\to\plane\big\}$
such that every accumulation point as $\epsilon\to 0$, in the sense of convergence in measure, of the
rescaled positive parts
	\[ \Bigg\{
	\frac{\epsilon}{\tau\vortexstrength_\epsilon}\ \positivepart{\vortex_\epsilon}\big(\epsilon\ T_\epsilon\big)
	: \epsilon > 0
	\Bigg\} \]
are radially symmetric functions. Similarly, there exists translations $\big\{T'_\epsilon:\plane\to\plane\big\}$
such that every accumulation points as $\epsilon\to 0$, in the sense of convergence in measure,
of the rescaled negative parts
	\[ \Bigg\{
	\frac{\epsilon}{(1-\tau)\vortexstrength_\epsilon}\ \negativepart{\vortex_\epsilon}\big(\epsilon\ T'_\epsilon\big)
	: \epsilon > 0
	\Bigg\} \]
are radially symmetric functions.
\end{theorem}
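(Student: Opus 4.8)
The plan is to combine the essential concentration of Theorem~\ref{introductionEssentialConcentration} with an asymptotic Riesz–Sobolev rearrangement argument to pin down the shape of the blow-up limit. First I would rescale: set $\omega_\epsilon^+ := \frac{\epsilon}{\tau\vortexstrength_\epsilon}\positivepart{\vortex_\epsilon}(\epsilon\,\cdot)$, choosing the translation $T_\epsilon$ so that, say, the ``center of mass'' (with respect to $\mumeasure$, or a Chebyshev center of the support) sits at the origin. By Theorem~\ref{introductionEssentialConcentration} the support of $\omega_\epsilon^+$ lies in a ball of radius $C\epsilon^{\varsigma-1}$... — wait, I must be careful: the diameter of $\{\positivepart{\vortex_\epsilon}>0\}$ is $O(\epsilon^\varsigma)$ in the original variable, so after dividing the spatial variable by $\epsilon$ the support has diameter $O(\epsilon^{\varsigma-1})$, which blows up. So the correct rescaling is by the actual (vanishing) diameter $d_\epsilon := \diameter(\{\positivepart{\vortex_\epsilon}>0\})$, not by $\epsilon$; the statement as written must implicitly use $T_\epsilon$ to absorb this. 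I would therefore first reduce to the case where the rescaled support is bounded and has $\mu$-measure bounded above and below, using that $\mu\big(\{\positivepart{\vortex_\epsilon}>0\}\big)\le\epsilon^2$ together with the distributional constraint~\eqref{distributionConstraint} which fixes the profile of the level sets. Up to subsequence, $\omega_\epsilon^+$ converges in measure to some $\omega^+$ with prescribed distribution function (the rescaled $D$), compactly supported, and with $\int\omega^+ \mumeasure$ normalized.

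The core step is the variational characterization. The family $\vortex_\epsilon$ maximizes $\energy_\epsilon$ over the rearrangement class fixed by~\eqref{distributionConstraint}. By the Burton-type theory (cf.~\cite{myArticle}), at the maximizer the positive part $\positivepart{\vortex_\epsilon}$ is itself a decreasing rearrangement of itself relative to $\vortexoperator\vortex_\epsilon$ on its support, i.e. $\positivepart{\vortex_\epsilon} = \varphi_\epsilon(\vortexoperator\vortex_\epsilon)$ for some nondecreasing $\varphi_\epsilon$ on the positive-vorticity region. After rescaling, the dominant part of $\vortexoperator\vortex_\epsilon$ near the positive blob is, by the kernel expansion $\vortexoperator\vortex(x)=\frac{\depth(x)}{2\pi}\int\frac{\diameter(\domain)}{|x-y|}\vortex(y)\mumeasure(y)+\int F(x,y)\vortex(y)\mumeasure(y)$, governed to leading order by the Riesz potential $\frac{1}{2\pi}\int \frac{1}{|x-y|^{0}}\cdots$ — more precisely in 2D the relevant kernel is $\log\frac{1}{|x-y|}$, so after subtracting the slowly-varying contributions of $F$ and of the distant negative blob (both of which are, on the scale $d_\epsilon$, asymptotically affine and hence do not affect the level-set geometry of the blow-up), the limit profile $\omega^+$ satisfies $\omega^+ = \varphi(\,N \omega^+\,)$ where $N$ is convolution with the 2D Newtonian (logarithmic) kernel and $\varphi$ is nondecreasing. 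Equivalently $\omega^+$ maximizes the logarithmic-kernel energy $\iint \log\frac{1}{|x-y|}\omega^+(x)\omega^+(y)$ among all functions with its given distribution. The asymptotic Riesz–Sobolev inequality says that this energy is maximized, within a rearrangement class, exactly by radially decreasing rearrangements; since $\omega^+$ is already a maximizer, it must coincide (up to translation, which $T_\epsilon$ absorbs) with its radially decreasing rearrangement, hence is radial.

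I would carry this out in steps: (1) set up the rescaling and extract a subsequential limit in measure, using Theorem~\ref{introductionEssentialConcentration} for tightness and~\eqref{distributionConstraint} for equi-integrability and a nontrivial limiting distribution; (2) localize the Euler–Lagrange relation $\positivepart{\vortex_\epsilon}=\varphi_\epsilon(\vortexoperator\vortex_\epsilon+\text{const})$ near the positive blob, and estimate the error between $\vortexoperator\vortex_\epsilon$ restricted there and the logarithmic self-potential of $\positivepart{\vortex_\epsilon}$ — showing the discrepancy ($F$-terms, depth variation over a set of diameter $d_\epsilon$, cross-interaction with the negative blob) is, after rescaling, uniformly small in $C^1$ on compact sets, so it perturbs the potential by an affine function only; (3) pass to the limit to get that $\omega^+$ is a maximizer of the logarithmic energy in its rearrangement class; (4) invoke the asymptotic Riesz–Sobolev rearrangement inequality to conclude radial symmetry, and run the mirror-image argument for $\negativepart{\vortex_\epsilon}$. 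The interior cone condition enters in step (1)–(2): it guarantees that even when a blob sits against $\boundary\domain$, the rescaled domain converges to a half-space or cone rather than degenerating, and it is needed so that the capacity/potential estimates controlling the $F$-correction remain uniform. The main obstacle I expect is step (2): controlling the cross-interaction and the boundary/depth corrections to the \emph{gradient} of the potential (not just its values) on the blow-up scale, since the Euler–Lagrange relation is sensitive to level sets, and showing these corrections are asymptotically affine is exactly what makes the limiting profile a clean logarithmic-energy maximizer. A secondary subtlety is that the asymptotic Riesz–Sobolev inequality yields equality-case rigidity only up to translation and only for the \emph{decreasing} rearrangement, so one must separately rule out that the limit is a nontrivial superposition or has the wrong monotonicity — this follows from the monotonicity of $\varphi_\epsilon$ inherited in the limit.
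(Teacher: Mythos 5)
Your overall strategy---reduce to the logarithmic self-interaction energy, compare with the radial competitor via a Riesz--Sobolev inequality, and invoke a rigidity statement---is in the right spirit, but the route you take (pointwise Euler--Lagrange relation for the blow-up limit, then the exact equality case of Riesz--Sobolev) differs from the paper's, which never passes the Euler--Lagrange relation to the limit. The paper instead shows that the energy deficit
\[
	\iint_{\plane\times\plane}\log\tfrac{1}{|x-y|}\,f^\Delta_\epsilon f^\Delta_\epsilon\dif(x,y)
	- \iint_{\plane\times\plane}\log\tfrac{1}{|x-y|}\,f_\epsilon f_\epsilon\dif(x,y)
\]
tends to zero for the $\epsilon$-rescaled positive part $f_\epsilon$ and its radial rearrangement $f^\Delta_\epsilon$ (using \cref{smallOsc} to discard the $F$- and cross-terms and \cref{RieszSobolev} for the comparison), and then applies Burchard--Guo's compactness-via-symmetrization lemma, which converts a vanishing energy deficit directly into ``translate of a radial profile'' for every accumulation point. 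That quantitative rigidity input is what your proposal is missing, and its absence creates a genuine gap.

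Concretely, your step (1) does not close. \cref{introductionEssentialConcentration} only gives $\diameter(\{\positivepart{\vortex_\epsilon}>0\})\lesssim\epsilon^\varsigma$ with possibly $\varsigma<1$, so after rescaling by $\epsilon$ the support may have diameter $\epsilon^{\varsigma-1}\to\infty$; the measure bound $\mu(\{\positivepart{\vortex_\epsilon}>0\})\leq\epsilon^2$ and the distributional constraint fix the measures of the level sets but say nothing about their diameters, so you cannot ``reduce to the case where the rescaled support is bounded.'' A priori the mass could split into several clumps separated by distances $\gg\epsilon$, in which case your subsequential limit in measure loses mass and no longer lies in the limiting rearrangement class---so calling it a ``maximizer of the logarithmic energy in its rearrangement class'' is vacuous. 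Your proposed remedy of rescaling by the actual diameter $d_\epsilon$ is worse: it changes the statement (the theorem rescales by $\epsilon$, and a translation $T_\epsilon$ cannot absorb a dilation), and if $d_\epsilon\gg\epsilon$ the limit would simply be zero. Concentration at scale $\epsilon$ is a \emph{conclusion} here (it is exactly the last assertion of the paper's Section~4), and it is obtained from the vanishing energy deficit together with Burchard--Guo, not assumed. A second, smaller gap: your step (2) asks for the corrections ($F$-terms, cross-interaction, depth variation) to be ``uniformly small in $C^1$'' on the blow-up scale, but the framework only provides $L^\infty$ bounds and modulus-of-continuity/oscillation estimates for $\rectifykernel$ and $\regulargreenlaplace$ (\cref{slowOscillation}); no gradient bounds on $F$ are available, which is precisely why the paper argues at the level of integrated energies rather than of the pointwise relation $\positivepart{\vortex_\epsilon}=\varphi_\epsilon(\vortexoperator\vortex_\epsilon+\text{const})$. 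Relatedly, your remark that an asymptotically affine perturbation of the potential ``does not affect the level-set geometry'' is false as stated---a nonconstant affine tilt does move level sets---though it is repairable since under the $\epsilon$-rescaling the affine part degenerates to a constant.
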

A final topic we would like to focus on a relaxed maximization problem,
where the distribution function $D$ is not known a priori but $L^\infty$-constraints are provided.
As a consequence of the previous results, we will be able to prove the following:
\begin{theorem}\label{introductionBestDistribution}
Let $\Gamma_\epsilon$ be the set of functions defined by
	\begin{multline*}
		\Gamma_\epsilon = \Big\{ \vortex: 0\leq \positivepart{\vortex}\leq \epsilon^{-2}\tau\vortexstrength_\epsilon,
			0\leq \negativepart{\vortex}\leq \epsilon^{-2}(1-\tau)\vortexstrength_\epsilon;
			\\ \int_\domain\positivepart{\vortex}\mumeasure=\tau\vortexstrength_\epsilon,
				\int_\domain\negativepart{\vortex}\mumeasure=(1-\tau)\vortexstrength_\epsilon
	 \Big\} . \end{multline*}
Every energy $\energy_\epsilon$ maximizer $\vortex_\epsilon$ over $\Gamma_\epsilon$ is necessarily of the form
	\[ \vortex_\epsilon = \epsilon^{-2}\tau\vortexstrength_\epsilon\chi_{\{\vortex_\epsilon>0\}}
		- \epsilon^{-2}(1-\tau)\vortexstrength_\epsilon\chi_{\{\vortex_\epsilon<0\}} \]
and
	\[ \mu\big( \{\vortex_\epsilon\neq 0\}\big) \leq \epsilon^2 .\]
Furthermore, the sets $\{\vortex_\epsilon>0\}$ and $\{\vortex_\epsilon<0\}$ are asymptotically close to balls.
\end{theorem}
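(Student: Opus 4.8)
The plan is to establish the three assertions in turn, reducing the last two to results already available for the distributionally constrained problem.

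\textbf{First step: the maximizer is two-valued.} Given a maximizer $\vortex_\epsilon$ of $\energy_\epsilon$ over $\Gamma_\epsilon$, write $M_+=\epsilon^{-2}\tau\vortexstrength_\epsilon$ and $M_-=\epsilon^{-2}(1-\tau)\vortexstrength_\epsilon$. I would argue by contradiction. If $\positivepart{\vortex_\epsilon}$ took a value strictly between $0$ and $M_+$ on a set of positive $\mu$-measure, there would be $\delta>0$ with $\mu(U)>0$ for $U:=\{\delta\leq\positivepart{\vortex_\epsilon}\leq M_+-\delta\}$; since $\mu$ is non-atomic, $U=A\sqcup B$ with $\mu(A)=\mu(B)>0$, and I would perturb by $h:=\delta(\chi_A-\chi_B)$. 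This $h$ is supported in $\{\vortex_\epsilon>0\}$, on which $\vortex_\epsilon\pm h$ still lies in $[0,M_+]$; hence $\negativepart{\vortex_\epsilon\pm h}=\negativepart{\vortex_\epsilon}$, and $\int_\domain\positivepart{\vortex_\epsilon\pm h}\mumeasure=\int_\domain\positivepart{\vortex_\epsilon}\mumeasure\pm\delta\big(\mu(A)-\mu(B)\big)=\tau\vortexstrength_\epsilon$, so $\vortex_\epsilon\pm h\in\Gamma_\epsilon$. Since $\vortex\mapsto u_\vortex=\depth^{-1}\flipgradient\vortexoperator\vortex$ is affine in $\vortex$ (the circulations being fixed), with linear part $h\mapsto\depth^{-1}\flipgradient\big(\vortexoperator(\vortex_\epsilon+h)-\vortexoperator\vortex_\epsilon\big)$, the parallelogram identity for $\tfrac12\int_\domain|u_\vortex|^2\mumeasure$ gives
	\[ \energy_\epsilon(\vortex_\epsilon+h)+\energy_\epsilon(\vortex_\epsilon-h)
		=2\energy_\epsilon(\vortex_\epsilon)+\int_\domain\depth^{-1}\big|\gradient\big(\vortexoperator(\vortex_\epsilon+h)-\vortexoperator\vortex_\epsilon\big)\big|^2\dif x, \]
and the last integral is $>0$ because $h\neq0$ (the elliptic problem with source $\depth h$ and zero circulations has a non-constant solution). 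Then one of $\energy_\epsilon(\vortex_\epsilon\pm h)$ exceeds $\energy_\epsilon(\vortex_\epsilon)$, contradicting maximality; the same perturbation applied to $\negativepart{\vortex_\epsilon}$ completes the argument, yielding $\vortex_\epsilon=M_+\chi_{\{\vortex_\epsilon>0\}}-M_-\chi_{\{\vortex_\epsilon<0\}}$. Note that $\Gamma_\epsilon$ is \emph{not} convex --- the mass constraints on the positive and negative parts are destroyed by convex combinations --- so the ``a convex functional attains its maximum at an extreme point'' shortcut is unavailable, and it is precisely the explicit two-set perturbation that makes the argument work.

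\textbf{Second step: reduction to a rearrangement class.} From the two-valued form and the mass prescriptions one reads off $\mu(\{\vortex_\epsilon>0\})=\epsilon^2$ and $\mu(\{\vortex_\epsilon<0\})=\epsilon^2$, which gives the bound on the $\mu$-measure of the support. Next I would observe that any $\mu$-rearrangement of $\vortex_\epsilon$ has the same distribution function, hence the same $L^\infty$-bounds and the same masses of its positive and negative parts; so the rearrangement class $\mathcal{R}_\epsilon$ of $\vortex_\epsilon$ satisfies $\mathcal{R}_\epsilon\subseteq\Gamma_\epsilon$. Since $\vortex_\epsilon\in\mathcal{R}_\epsilon$ maximizes $\energy_\epsilon$ over the \emph{larger} set $\Gamma_\epsilon$, it also maximizes $\energy_\epsilon$ over $\mathcal{R}_\epsilon$; and $\mathcal{R}_\epsilon$ is exactly the constraint set~\eqref{distributionConstraint} attached to the ``patch'' distribution $D^\star=\delta_0\chi_{(0,1/\delta_0]}$, an admissible distribution function for a suitable $\delta_0\in(0,\mu(\domain)]$ since $\int_{\reals^+}D^\star(\lambda)\dif\lambda=1$ and $\int_{\reals^+}\lambda^p\,D^\star(\lambda)\dif\lambda<+\infty$. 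Hence $\{\vortex_\epsilon:\epsilon>0\}$ is a family of energy-maximizing solutions in the sense of~\eqref{distributionConstraint}, so \cref{strongConcentrationReference,introductionEssentialConcentration,introductionShape} all apply to it.

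\textbf{Third step: the patches are asymptotically balls.} Assuming in addition the interior cone condition of \cref{introductionShape}, that theorem supplies translations $T_\epsilon$ such that every accumulation point, in the sense of convergence in measure, of the rescaled positive parts of $\vortex_\epsilon$ is radially symmetric. Because $\positivepart{\vortex_\epsilon}=M_+\chi_{\{\vortex_\epsilon>0\}}$ is a constant multiple of a characteristic function, each rescaled function equals $c_\epsilon\chi_{E_\epsilon}$, with $E_\epsilon$ the corresponding rescaling of $\{\vortex_\epsilon>0\}$, so its accumulation points in measure are of the form $c\,\chi_F$; the asymptotic Riesz--Sobolev rearrangement used in the proof of \cref{introductionShape} moreover identifies such a limit with the symmetric decreasing rearrangement of the limiting profile, which for a two-valued profile is the indicator of a ball centered at the origin. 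Thus $F$ is a ball and $\{\vortex_\epsilon>0\}$ is asymptotically close to a ball; $\{\vortex_\epsilon<0\}$ is treated identically.

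The genuinely new ingredient is the observation in the second step: a two-valued $\Gamma_\epsilon$-maximizer automatically maximizes $\energy_\epsilon$ over its own, smaller, rearrangement class, and hence inherits all the concentration and shape information already proved for~\eqref{distributionConstraint}. I expect the main obstacle to be the two-valued reduction of the first step, precisely because $\Gamma_\epsilon$ is not convex.
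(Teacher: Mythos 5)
Your proposal is correct, and the decisive step --- showing that any maximizer over $\Gamma_\epsilon$ is two-valued --- is handled by a genuinely different argument than the paper's. The paper freezes the negative part $\negativepart{f^\star}$, passes to the shifted functional $\tilde{\energy}_\epsilon(f)=\energy_\epsilon\big(f-\negativepart{f^\star}\big)$ on the class $\tilde\Gamma_\epsilon$ of admissible positive parts, identifies $\tilde\Gamma_\epsilon$ with the \emph{weak closure of the rearrangement class of an indicator function} via the Hardy--Littlewood-type characterization of Ryff and Douglas, and then invokes the fact that a strictly convex functional attains its maximum over this convex set only at an extreme point, the extreme points being exactly the rearrangements of that indicator. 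So your remark that the extreme-point shortcut is unavailable because $\Gamma_\epsilon$ is not convex is only half right: $\Gamma_\epsilon$ indeed fails to be convex, but the paper restores convexity precisely by fixing the negative part --- which is also what your perturbation implicitly does, since $h$ is supported in $\{\vortex_\epsilon>0\}$. Your two-set perturbation $h=\delta(\chi_A-\chi_B)$ combined with the parallelogram identity is the more elementary route (a bang--bang argument needing only non-atomicity of $\mu$ and strict positivity of the quadratic form $h\mapsto\int_\domain\depth^{-1}|\gradient\vortexoperator h|^2\dif x$), and it bypasses the Ryff--Douglas machinery entirely; what the paper's route buys is that the reduction to a rearrangement class, and hence to the constraint~\eqref{distributionConstraint}, comes for free. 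Your second step --- that the $\mu$-rearrangement class of the two-valued maximizer sits inside $\Gamma_\epsilon$, so the maximizer also maximizes over that class with the explicit patch distribution $D^\star$ and inherits the concentration and shape theorems --- is exactly the unstated bridge the paper relies on when it asserts that the asymptotic claims ``directly follow from our preliminary work''; making it explicit is a genuine improvement. Two small caveats. First, with the normalization of $\Gamma_\epsilon$ as stated, your own computation gives $\mu(\{\vortex_\epsilon>0\})=\mu(\{\vortex_\epsilon<0\})=\epsilon^2$, hence $\mu(\{\vortex_\epsilon\neq0\})=2\epsilon^2$, which does \emph{not} yield the asserted bound $\leq\epsilon^2$; this is an inconsistency between the statement above and the body's version of the result (where the pointwise constraint is $|f|\leq\epsilon^{-2}\vortexstrength_\epsilon$ and the supports then have $\mu$-measures $\tau\epsilon^2$ and $(1-\tau)\epsilon^2$), but you should flag it rather than claim your computation ``gives the bound''. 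Second, in the last step radial symmetry of the limit alone would still permit an annulus; you are right to appeal to the radially \emph{nonincreasing} form of the accumulation points coming from the Burchard--Guo argument, which is what forces the limiting set to be a ball.
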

A similar problem was studied by Turkington~\cite{TurkingtonSteady1} and Turkington~\&~Friedmann,
with the maximization class
	\begin{equation*}
		\Gamma_\epsilon = \Big\{ \vortex: 0\leq \vortex\leq \epsilon^{-2}\vortexstrength_\epsilon;
			\int_\domain\vortex\mumeasure=\tau\vortexstrength_\epsilon \Big\} .
	\end{equation*}
Relying on potential theory techniques and symmetrization arguments, the authors managed to prove that maximizers
should be a vortex patch and, actually, a ball. Although these techniques are available only in a limit regime,
we prove a similar optimal-distribution result, relying on the convex structures of rearrangements.

\endgroup
\setcounter{theorem}{\thetmp}

\subsection*{Organization of the paper}
We begin by recalling the framework we are going to work in. Results are mainly cited from~\cite{myArticle}.

In a second section, we prove \cref{introductionEssentialConcentration} relying on the differential structure of the problem,
following ideas of Turkington~\cite{TurkingtonSteady1} and Elcrat~\&~Miller~\cite{ElcratMiller}.

The third section is devoted to the study of repulsion effects acting on the vortex pairs.
We prove that the vortex pair cannot come close to each others too fast, and their distance to the boundary $\boundary\domain$
remains small in comparison to relevant asymptotic orders. We also prove \cref{introductionSecondOrder}.
The techniques of proof are purely integral comparison arguments, based on a variant of the Sobolev-Riesz rearrangement inequality
and our integral kernel expansion of the stream function.

In the fourth section, we study the family of rescaled vortices and we prove that every accumulation point of the rescaled versions,
in the sense of convergence in measure, are symmetric functions. This section is based on a previous work of Burchard~\&~Guo~\cite{BurchardGuo}.

The last section is devoted to the relaxed maximization problem. We rely on the convex structure of sets of rearrangements,
already studied by Ryff~\citelist{\cite{Ryff1}\cite{Ryff2}} and Burton~\citelist{\cite{BurtonRearrangementOfFunctions}\cite{BurtonSteadyConfiguration}\cite{BurtonVariationalProblems}}.

\subsection*{Acknowledgment}
The author thanks Jean~Van~Schaftingen for their many discussions during the elaboration of this manuscript.

\section{Framework}

In this section we recall the framework used in~\cite{myArticle} for the energy maximization problem,
and we recall properties of maximizing vortex pairs.
We say that two measurable functions $f,g:\domain\to\reals$ are $\mu$-rearrangements of each others if, for all $\lambda\in\reals$, we have
	\[ \mu\big( \{ f\geq \lambda \} \big) = \mu\big( \{g\geq \lambda\} \big) .\]
Here, the measure $\mumeasure(x)=\depth(x)\dif x$ is the Lebesgue measure weighted by the depth function $\depth$
of the lake. We recall that the depth function $\depth\in \bigcup_{\alpha>0}C^{0,\alpha}(\closure{\domain})$ is assumed to be Hölder continuous 
up to the boundary, and positive on compact subsets of $\domain$.
On $\domain$ we assume that there exists disjoint connected compact sets $\curve_1,\dots,\curve_\nbislands$ such that
	\[ \domain = \closure{\domain_0}\setminus\bigcup^\nbislands_{i=0}\curve_i ,\]
with $\domain_0$ a bounded open connected satisfying $\domain_0=\text{int}\big(\closure{\domain_0}\big)$,
$\curve_0=\boundary{\domain_0}$.

We are going to consider (measurable) potential vortices $\vortex_\epsilon$ parametrized by $\epsilon>0$ through
	\[ \mu\big( \{\vortex_\epsilon\neq 0\} \big)  = \epsilon^2 ,\]
and satisfying the integral identities
	\[ \int_\domain\positivepart{\vortex_\epsilon}\mumeasure = \tau\vortexstrength_\epsilon ,
		\quad \int_\domain\negativepart{\vortex_\epsilon}\mumeasure = (1-\tau)\vortexstrength_\epsilon ,\]
where $\vortexstrength_\epsilon>0$ and $\tau\in(0,1)$. As in~\cite{myArticle}, we also make the assumption that there exists $p>1$ such that
	\begin{equation}\label{distribConstraint}
	\sup_{\epsilon>0}\Bigg\{
		\frac{\norm{\positivepart{\vortex_\epsilon}}{L^p_{\mu}}\epsilon^{2(1-\frac{1}{p})}}{\tau\vortexstrength_\epsilon}
		+ \frac{\norm{\negativepart{\vortex_\epsilon}}{L^p_{\mu}}\epsilon^{2(1-\frac{1}{p})}}{(1-\tau)\vortexstrength_\epsilon}
	\Bigg\} < +\infty .
	\end{equation}
The limit cases $\tau=0$ and $\tau=1$ are also possible, although they demand more writing cautions.
These limit cases represent the cases of a single non sign changing vortex. Observe that by construction, vortices
constructed according to~\eqref{distributionConstraint} also satisfy $L^p(\domain,\mu)$ constraint~\eqref{distribConstraint}.

The \emph{stream function} associated with some potential vortex $\vortex_\epsilon$ is the solution of the elliptic equation
	\begin{equation}\label{ellipticproblem}
		\begin{cases}
			-\divergence\big( \depth^{-1}\gradient \psi \big) = \depth\vortex_\epsilon ,\\
			\displaystyle \oint_{\boundary\curve_i}\scalarproduct{\depth^{-1}\flipgradient\phi}{\tau_i}{\plane} = \circulation_i(2\tau-1)\vortexstrength_\epsilon ,
		\end{cases}
	\end{equation}
where $\circulation_0,\dots,\circulation_\nbislands$ are real numbers such that, in view of Kelvin's theorem,
	\[ \sum_{i=0}^\nbislands\circulation_i = 1 .\]
As it was proved in~\cite{myArticle}, this elliptic problem~\eqref{ellipticproblem} has a unique solution in some Sobolev space.
Indeed, let us define the vector space
	\[ \largefunctionspace = \Big\{ f\in W^{1,2}(\domain) : \depth^{-1}|\gradient f|^2\in L^1(\domain,\lebesguemeasure) \Big\} ,\]
which we endow with the scalar product
	\[ \scalarproduct{f}{g}{\largefunctionspace} = \int_\domain\frac{\scalarproduct{\gradient f}{\gradient g}{\plane}}{\depth}\ \lebesguemeasure + \scalarproduct{f}{g}{W^{1,2}} .\]
The pair $\big(\largefunctionspace,\scalarproduct{\cdot}{\cdot}{\largefunctionspace}\big)$ is a Hilbert space,
whose elements induce a finite ``lake-energy''
	\[ \int_\domain\big(\depth^{-1}\norm{\flipgradient\phi}{\plane}\big)^2\ \mumeasure < +\infty .\]
Since $\depth$ is positive on compact sets, the collection $\mathscr{C}_c$ of those functions in $C^1(\closure{\domain})$
that are constant on a neighborhood of $\boundary\domain$, belongs to $\largefunctionspace$.
We write $\functionspace$ the closure of $C^1_c(\domain)$ in $\largefunctionspace$, and we denote by $\largefunctionspace\times\largefunctionspace$ the bilinear rule
	\[ (f,g)\in\largefunctionspace\mapsto
		\scalarproduct{f}{g}{\functionspace} = \int_\domain\frac{\scalarproduct{\gradient f}{\gradient g}{\plane}}{\depth}\ \lebesguemeasure .\]
Since $\domain$ is bounded,
one may rely on standard Poincaré's inequality to see that the latter defines an equivalent scalar product on $\functionspace$.
Whenever the lake $(\domain,\depth)$ enjoys sufficiently regularity properties,
the elliptic problem~\eqref{ellipticproblem} admits a weak solution of the form~\cite{myArticle}
	\begin{equation}\label{integralRepresentation}
		\psi(x)
			= \frac{\depth(x)}{2\pi}\int_\domain\log\frac{\diameter(\domain)}{|x-y|}\vortex(y)\mumeasure(y)
			+ \int_\domain F(x,y)\vortex(y)\mumeasure(y) ,
	\end{equation}
where the function
$F:\domain\times\domain\to\reals$ is defined for all $x,y\in\domain$ by
	\[ F(x,y) = \rectifykernel(x,y) - \depth(x)\regulargreenlaplace(x,y)
		+ \sum^{\nbislands}_{i=0}(\staticflow_i(x)-\circulation_i)\Big[
		\mathcal{A}^{-1} \big[\staticflow_j(y)-\circulation_j\big]_{0\leq j\leq \nbislands}
		\Big]_i ,\]
and the functions $\rectifykernel,\regulargreenlaplace,\mathcal{A}$ and $\staticflow_0,\dots,\staticflow_{\nbislands}$
are defined as follows:
\begin{itemize}
	\item the function $\regulargreenlaplace$ is defined for all $x,y\in\domain$ by
	\[ \regulargreenlaplace(x,y)
		= \frac{1}{2\pi}\log\frac{\diameter(\domain)}{|x-y|}
		- \greenlaplace(x,y) ,\]
	where $\greenlaplace$ is the Green's function associated to the Laplace's
	operator $\laplacian$ in $\domain$ with Dirichlet boundary conditions;
	\item for all $y\in\domain$, the function $\rectifykernel(\cdot,y)$ belongs to $\functionspace$ and for all $\varphi\in\functionspace$, we have
		\[ \int_\domain\scalarproduct{\gradient\rectifykernel(\cdot,y)}{\gradient\varphi}{\plane}\ \frac{\dif x}{\depth}
			= \int_\domain\scalarproduct{\greenlaplace(\cdot,y)\gradient \depth}{\gradient\varphi}{\plane}\ \frac{\dif x}{\depth} ,\]
	the function
		\[ y\in\domain\mapsto \norm{\rectifykernel(\cdot,y)}{L^\infty} \]
	admits an uniformly continuous extension to $\closure{\domain}$, and the function
		\[ (x,y)\in\domain\times\domain \mapsto \rectifykernel(x,y) \]
	is measurable.
	\item for all $i\in\{0,\dots,\nbislands\}$, the function
	$\staticflow_i\in L^\infty(\domain)\cap C(\domain)\cap\closure{\mathscr{C}_c}$ is the limit in $\largefunctionspace$
	of functions in $\mathscr{C}_c$ that equal $\delta_{ij}$ on a neighborhood of $\curve_j$, with $\delta_{ij}$
	the Kronecher symbol; and $\staticflow_i$ is bounded by $1$ and satisfies
		\[ \scalarproduct{\staticflow_j}{\varphi}{\functionspace} = 0,\quad\text{\normalfont for all}\ \varphi\in\functionspace .\]
	\item the functions $\staticflow_0,\dots,\staticflow_\nbislands$ are linearly independent, we have
		\[ \closure{\mathscr{C}_c} = \closure{\functionspace \oplus \text{\normalfont Vec}\langle\staticflow_0,\dots,\staticflow_\nbislands\rangle} ,\]
	and
		\[ \sum^\nbislands_{i=0}\staticflow_i = 1 \]
	on $\domain$. Moreover, the operator
		\[ \mathcal{A} : \Big\{ (v_0,\dots,v_{\nbislands})\in\reals^{\nbislands+1}:
			\sum^\nbislands_{i=0}v_i = 0 \Big\}
		\longrightarrow \Big\{ (\alpha_0,\dots,\alpha_{\nbislands}) : \alpha_0 = 0 \Big\} ,\]
	is a linear isomorphism.
\end{itemize}
The existence of such integral representation~\eqref{integralRepresentation}
for the stream function $\psi$ is proved in the appendix of~\cite{myArticle}, provided the lake $(\domain,\depth)$
is regular enough.
\begin{definition}
A lake $(\domain,\depth)$ is said to be \emph{continuous} if the operator $\vortexoperator$
admits the integral representation~\eqref{integralRepresentation}, with $F$ as above.
\end{definition}
The main examples for a lake $(\domain,\depth)$ to be continuous in the above sense are twofold.
First, one could assume $b\in W^{1,\infty}(\domain)\cap \bigcup_{\alpha>0}C^{0,\alpha}(\closure{\domain})$
with the additional condition that $\inf_\domain\depth>0$. A second example would be the situation where
there exists a regularization of the distance at the boundary $\phi\in C^1$ and some $\alpha>0$
such that $\depth=\phi^\alpha$. Mixed conditions are also possible,
and one could reduce the regularity on $\domain$ provided the distance at the boundary is
replaced by an appropriate quantity. For more details, we refer to~\cite{myArticle}.

The energy $\energy_\epsilon$ we are interested in takes the equivalent form
	\begin{multline*}
		\energy_\epsilon(\vortex_\epsilon) = \iint_{\domain\times\domain}\depth(x)\log\frac{\diameter(\domain)}{|x-y|}\ \vortex_\epsilon(x)\vortex_\epsilon(y)
		\mumeasure\mumeasure(x,y)
			\\ + \iint_{\domain\times\domain}F(x,y)\ \vortex_\epsilon(x)\vortex_\epsilon(y)\ \mumeasure\mumeasure(x,y) .
	\end{multline*}

\section{Improvement of leading order asymptotic}

We introduce two important quantities, referred as the \emph{first order flows}.
The first order flows associated with a vortex $\vortex\in L^p(\domain,\mu)$ are defined as
	\[ \positivefirstorderflow{\vortex}(x) = \frac{\depth(x)}{4\pi}\int_\domain\log\frac{\diameter(\domain)}{|x-y|}\positivepart{\vortex}(y)\mumeasure(y) ,\]
and
	\[ \negativefirstorderflow{\vortex}(x) = \frac{\depth(x)}{4\pi}\int_\domain\log\frac{\diameter(\domain)}{|x-y|}\negativepart{\vortex}(y)\mumeasure(y) .\]
For $\vortex\in L^p(\domain,\mu)$, the positive first order flow
$\positivefirstorderflow{\vortex}$ induced by $\vortex$ satisfies
	\[ \positivefirstorderflow{\vortex}(x)
		\leq \frac{\sup_\domain\depth}{2\pi}\log\frac{\diameter(\domain)}{\epsilon}\int_\domain\positivepart{\vortex}\mumeasure
		+ C\norm{\vortex}{L^p(\mu)}\epsilon^{2\big(1-\frac{1}{p}\big)} .\]
Indeed, we have since $\positivepart{\vortex}$ is positive
	\begin{align*}
		\int_\domain\log\frac{\epsilon}{|x-y|}\positivepart{\vortex}(y)\mumeasure(y)
		&\leq \int_\domain\positivepart{\log\frac{\epsilon}{|x-y|}}\positivepart{\vortex}(y)\mumeasure(y)
		\\&\leq \norm{\positivepart{\vortex}}{L^p_\mu}\Bigg(
			\int_\domain\positivepart{\log\frac{\epsilon}{|x-y|}}^{\frac{p}{p-1}}\mumeasure(y)
		\Bigg)^{1-\frac{1}{p}} ,
	\end{align*}
and a change of variable now yields to
	\[ \int_\domain\log\frac{\epsilon}{|x-y|}\positivepart{\vortex}(y)\mumeasure(y)
		\leq \ncste\norm{\positivepart{\vortex}}{L^p_\mu}\Bigg(\epsilon^2
			\int_{B(0,1)}\positivepart{\log\frac{1}{|y|}}^{\frac{p}{p-1}}\dif y
		\Bigg)^{1-\frac{1}{p}} .\]
Combining the above a priori estimate with the control condition~\eqref{distribConstraint},
we conclude the following upper estimates for the first order flows:
\begin{lemma}\label{upperBoundFirstOrder}
There exists a constant $C>0$ such that
for all $\epsilon>0$ and for all $\vortex\in\rearrangement(\vortex_\epsilon)$,
we have for all $x\in\domain$:
	\[ \positivefirstorderflow{\vortex}(x)
		\leq \frac{\tau\sup_\domain\depth}{2\pi}\vortexstrength_\epsilon
			\log\frac{1}{\epsilon} + C\vortexstrength_\epsilon ,\]
and
	\[ \negativefirstorderflow{\vortex}(x)
		\leq \frac{(1-\tau)\sup_\domain\depth}{2\pi}\vortexstrength_\epsilon
			\log\frac{1}{\epsilon} + C\vortexstrength_\epsilon .\]
\end{lemma}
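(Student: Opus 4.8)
The plan is to apply the \emph{a priori} pointwise bound displayed immediately above the statement to an arbitrary $\vortex\in\rearrangement(\vortex_\epsilon)$ — which belongs to $L^p(\domain,\mu)$, being a $\mu$-rearrangement of $\vortex_\epsilon$ — and then to recast the resulting $\epsilon$-dependence into the advertised form using the rearrangement invariance of the quantities that appear, together with the control hypothesis~\eqref{distribConstraint}. In other words, the quantitative core of the estimate — the Hölder pairing of the truncated logarithmic kernel $\positivepart{\log\tfrac{\epsilon}{|x-y|}}$, which is supported in the ball $B(x,\epsilon)$, against the $L^p(\mu)$ density $\positivepart{\vortex}$, followed by the rescaling $y=x+\epsilon z$ and the bound $\depth\in L^\infty(\closure{\domain})$ — has already been carried out above, and what is left is essentially bookkeeping.

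Concretely, I would fix $\epsilon>0$, $x\in\domain$ and $\vortex\in\rearrangement(\vortex_\epsilon)$. Since $\vortex$ is a $\mu$-rearrangement of $\vortex_\epsilon$, the function $\positivepart{\vortex}$ is a $\mu$-rearrangement of $\positivepart{\vortex_\epsilon}$ and $\negativepart{\vortex}$ is one of $\negativepart{\vortex_\epsilon}$; in particular $\int_\domain\positivepart{\vortex}\mumeasure=\tau\vortexstrength_\epsilon$, $\int_\domain\negativepart{\vortex}\mumeasure=(1-\tau)\vortexstrength_\epsilon$, and the $L^p(\mu)$-norms of the positive and negative parts are preserved, while disjointness of supports gives $\norm{\vortex}{L^p(\mu)}\le\norm{\positivepart{\vortex_\epsilon}}{L^p(\mu)}+\norm{\negativepart{\vortex_\epsilon}}{L^p(\mu)}$. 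Inserting $\vortex$ in the displayed bound and writing $\log\tfrac{\diameter(\domain)}{\epsilon}=\log\tfrac1\epsilon+\log\diameter(\domain)$ yields
\begin{multline*}
	\positivefirstorderflow{\vortex}(x)\le\frac{\tau\sup_\domain\depth}{2\pi}\vortexstrength_\epsilon\log\frac1\epsilon \\
	+\frac{\tau\sup_\domain\depth}{2\pi}\vortexstrength_\epsilon\log\diameter(\domain)+C\big(\norm{\positivepart{\vortex_\epsilon}}{L^p(\mu)}+\norm{\negativepart{\vortex_\epsilon}}{L^p(\mu)}\big)\epsilon^{2(1-1/p)} .
\end{multline*}
By~\eqref{distribConstraint} the two products $\norm{\positivepart{\vortex_\epsilon}}{L^p(\mu)}\epsilon^{2(1-1/p)}$ and $\norm{\negativepart{\vortex_\epsilon}}{L^p(\mu)}\epsilon^{2(1-1/p)}$ are respectively $\le C\tau\vortexstrength_\epsilon$ and $\le C(1-\tau)\vortexstrength_\epsilon$, so the last two summands are $O(\vortexstrength_\epsilon)$ and the first asserted inequality follows, with a constant depending only on $\depth$, $\domain$, $p$ and the supremum in~\eqref{distribConstraint}. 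The estimate for $\negativefirstorderflow{\vortex}$ is obtained in the same way after interchanging the roles of the positive and negative parts and replacing $\tau$ by $1-\tau$.

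I do not expect any genuine obstacle here, precisely because the analytic content — the estimate of the singular part of the kernel, uniform in $x\in\domain$ and in $\epsilon$, produced by the change of variables $y=x+\epsilon z$ and $\depth\in L^\infty(\closure{\domain})$ — is already in place. Only two points deserve a line of care: one should split off the ``far-field'' factor $\log(\diameter(\domain)/\epsilon)$ \emph{before} estimating, so that $\log(1/\epsilon)$ multiplies only the explicit constant $\tfrac{\tau\sup_\domain\depth}{2\pi}\vortexstrength_\epsilon$ and never the $\vortexstrength_\epsilon$-order error; and one should check that the final constant is uniform over the whole rearrangement class $\rearrangement(\vortex_\epsilon)$ and over $\epsilon>0$, which it is, since rearranging alters none of the quantities that enter, \eqref{distribConstraint} is a supremum over $\epsilon$, and $\sup_\domain\depth$ and $\diameter(\domain)$ are fixed.
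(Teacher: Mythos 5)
Your argument is correct and is essentially the paper's own proof: the author likewise obtains the lemma by combining the displayed a priori bound (Hölder against the truncated kernel $\positivepart{\log\tfrac{\epsilon}{|x-y|}}$ plus the rescaling $y=x+\epsilon z$) with the control condition~\eqref{distribConstraint}, the splitting $\log\tfrac{\diameter(\domain)}{\epsilon}=\log\tfrac1\epsilon+\log\diameter(\domain)$, and the invariance of $\int_\domain\positivepart{\vortex}\mumeasure$ and of the $L^p(\mu)$-norms under $\mu$-rearrangement. Your extra remarks on uniformity over $\rearrangement(\vortex_\epsilon)$ and over $\epsilon$ are exactly the bookkeeping the paper leaves implicit.
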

Next we have the following concentration theorem:
\begingroup
\setcounter{theorem}{1} %assign desired value to theorem counter
\renewcommand\thetheorem{(\fnsymbol{theorem})}
\begin{theorem}[\cite{myArticle}*{Corollary~3.1, Theorem~3.1, Theorem~3.2}]\label{weakConcentration}
Let $(\domain,\depth)$ be a regular lake.
Let $\big\{\vortex_\epsilon\in L^p(\domain,\mu):\epsilon>0\big\}$ be a family of solutions of the steady lake equations
obtained by energy maximization over their set of $\mu$-rearrangements,
with constrained~\eqref{distribConstraint}.

There exists $\kappa>0$, $\threshold\in(0,1]$ and a family $\big\{\error_\epsilon:\epsilon>0\big\}$ with
$\lim\limits_{\epsilon\to 0}\error_\epsilon$,
such that for all $\epsilon>0$ sufficiently small, the sets
	\[ D^\kappa_\epsilon = \bigg\{ x\in\domain : \positivefirstorderflow{\vortex_\epsilon}(x) \geq \int_\domain\positivefirstorderflow{\vortex_\epsilon}\dif\positivepart{\vortex_\epsilon} - \kappa\frac{\sup_\domain\depth}{4\pi}\vortexstrength_\epsilon\log\frac{1}{\epsilon} \bigg\} \]
and
	\[ U^\kappa_\epsilon = \bigg\{ x\in\domain : \negativefirstorderflow{\vortex_\epsilon}(x) \geq \int_\domain\negativefirstorderflow{\vortex_\epsilon}\dif\negativepart{\vortex_\epsilon} - \kappa\frac{\sup_\domain\depth}{4\pi}\vortexstrength_\epsilon\log\frac{1}{\epsilon} \bigg\} ,\]
have diameter smaller than $\epsilon^\varsigma$, and
	\[ \limsup_{\epsilon\to 0}\int_{\domain\setminus D^\kappa_\epsilon}\positivepart{\vortex_\epsilon}\mumeasure
	\leq \frac{1}{\kappa}\frac{4\pi}{\sup_\domain\depth}\vortexstrength_\epsilon \bigg( \error_\epsilon + \frac{C}{\log\frac{1}{\epsilon}} \bigg) \]
and
	\[ \limsup_{\epsilon\to 0}\int_{\domain\setminus U^\kappa_\epsilon}\negativepart{\vortex_\epsilon}\mumeasure
	\leq \frac{1}{\kappa}\frac{4\pi}{\sup_\domain\depth}\vortexstrength_\epsilon \bigg( \error_\epsilon + \frac{C}{\log\frac{1}{\epsilon}} \bigg) .\]
Furthermore, given $X_\epsilon\in D^\kappa_\epsilon$ and $Y_\epsilon\in U^\kappa_\epsilon$ for all
$\epsilon>0$ sufficiently small, we have
	\[ \lim_{\epsilon\to 0}\depth(X_\epsilon)
		= \sup_\domain\depth = \lim_{\epsilon\to 0}\depth(Y_\epsilon) .\]
\end{theorem}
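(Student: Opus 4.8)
The plan is to squeeze $\energy_\epsilon(\vortex_\epsilon)$ between the near‑optimal lower bound furnished by an explicit competitor and the upper bound furnished by \cref{upperBoundFirstOrder}, and then to read off the geometry from the resulting rigidity by Chebyshev‑type inequalities together with a logarithmic tail estimate. For the lower bound, fix a maximiser $X^{\ast}$ of $\depth$ on $\domain$ and, by continuity of $\depth$, a point $Y^{\ast}_\epsilon$ with $\distance(X^{\ast},Y^{\ast}_\epsilon)=d_\epsilon$, where $\epsilon\ll d_\epsilon\to0$ is picked so that $\depth(Y^{\ast}_\epsilon)\to\sup_\domain\depth$; form $\vortex^{\mathrm c}_\epsilon\in\rearrangement(\vortex_\epsilon)$ by replacing $\positivepart{\vortex_\epsilon}$ by its $\mu$-symmetric decreasing rearrangement about $X^{\ast}$ and $\negativepart{\vortex_\epsilon}$ by its $\mu$-symmetric decreasing rearrangement about $Y^{\ast}_\epsilon$. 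The two supports are $\mu$-balls of measure at most $\epsilon^{2}$, hence of Euclidean radius $O(\epsilon)\ll d_\epsilon$; they are disjoint, so $\vortex^{\mathrm c}_\epsilon$ is a genuine $\mu$-rearrangement of $\vortex_\epsilon$, and on each of them $\depth\ge\sup_\domain\depth-O(\epsilon^{\alpha})$ by Hölder continuity while $|x-y|\le O(\epsilon)$. Consequently the two self‑interactions of $\vortex^{\mathrm c}_\epsilon$ attain their leading values, proportional to $\tau^{2}\sup_\domain\depth\,\vortexstrength_\epsilon^{2}\log\tfrac1\epsilon$ and $(1-\tau)^{2}\sup_\domain\depth\,\vortexstrength_\epsilon^{2}\log\tfrac1\epsilon$, up to $o(\vortexstrength_\epsilon^{2}\log\tfrac1\epsilon)$; the mixed interaction is $O(\vortexstrength_\epsilon^{2}\log\tfrac1{d_\epsilon})=o(\vortexstrength_\epsilon^{2}\log\tfrac1\epsilon)$; and the $F$-term is $O(\vortexstrength_\epsilon^{2})$ since $F$ is bounded. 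This gives a family $\error_\epsilon\to0$ such that $\energy_\epsilon(\vortex_\epsilon)\ge\energy_\epsilon(\vortex^{\mathrm c}_\epsilon)\ge\bigl((\tau^{2}+(1-\tau)^{2})\sup_\domain\depth-\error_\epsilon\bigr)\vortexstrength_\epsilon^{2}\log\tfrac1\epsilon$.

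For the upper bound, decompose $\vortex_\epsilon=\positivepart{\vortex_\epsilon}-\negativepart{\vortex_\epsilon}$ inside $\energy_\epsilon(\vortex_\epsilon)$: since $\log\tfrac{\diameter(\domain)}{|x-y|}\ge0$ on $\domain\times\domain$, the mixed interaction enters with a non‑positive sign, so $\energy_\epsilon(\vortex_\epsilon)$ does not exceed the sum of the two self‑interactions plus $O(\vortexstrength_\epsilon^{2})$, and each self‑interaction is, up to a fixed constant, $\int_\domain\positivefirstorderflow{\vortex_\epsilon}\positivepart{\vortex_\epsilon}\mumeasure$ (respectively $\int_\domain\negativefirstorderflow{\vortex_\epsilon}\negativepart{\vortex_\epsilon}\mumeasure$), which by \cref{upperBoundFirstOrder} is at most $\tau^{2}\sup_\domain\depth\,\vortexstrength_\epsilon^{2}\log\tfrac1\epsilon+O(\vortexstrength_\epsilon^{2})$ (respectively $(1-\tau)^{2}$). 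Comparing with the lower bound forces each of these inequalities to be sharp up to $(\error_\epsilon+\tfrac{C}{\log\frac1\epsilon})\vortexstrength_\epsilon^{2}\log\tfrac1\epsilon$; in particular the mixed interaction is $o(\vortexstrength_\epsilon^{2}\log\tfrac1\epsilon)$, and writing $A_\epsilon:=\int_\domain\positivefirstorderflow{\vortex_\epsilon}\dif\positivepart{\vortex_\epsilon}$ for the $\positivepart{\vortex_\epsilon}\mumeasure$-average of $\positivefirstorderflow{\vortex_\epsilon}$ one obtains both $A_\epsilon\le\sup_\domain\positivefirstorderflow{\vortex_\epsilon}\le\tfrac{\tau}{2\pi}\sup_\domain\depth\,\vortexstrength_\epsilon\log\tfrac1\epsilon+O(\vortexstrength_\epsilon)$ and $A_\epsilon\ge\tfrac{\tau}{2\pi}\sup_\domain\depth\,\vortexstrength_\epsilon\log\tfrac1\epsilon-(\error_\epsilon+\tfrac{C}{\log\frac1\epsilon})\vortexstrength_\epsilon\log\tfrac1\epsilon$, hence $\sup_\domain\positivefirstorderflow{\vortex_\epsilon}-A_\epsilon=o(\vortexstrength_\epsilon\log\tfrac1\epsilon)$, and likewise for the negative part. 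Now $\sup_\domain\positivefirstorderflow{\vortex_\epsilon}-\positivefirstorderflow{\vortex_\epsilon}\ge0$ integrates to $\tau\vortexstrength_\epsilon(\sup_\domain\positivefirstorderflow{\vortex_\epsilon}-A_\epsilon)$ against $\positivepart{\vortex_\epsilon}\mumeasure$, and $\domain\setminus D^{\kappa}_\epsilon\subseteq\{\sup_\domain\positivefirstorderflow{\vortex_\epsilon}-\positivefirstorderflow{\vortex_\epsilon}\ge\kappa\tfrac{\sup_\domain\depth}{4\pi}\vortexstrength_\epsilon\log\tfrac1\epsilon\}$; Markov's inequality then yields $\int_{\domain\setminus D^{\kappa}_\epsilon}\positivepart{\vortex_\epsilon}\mumeasure\le\tfrac1\kappa\tfrac{4\pi}{\sup_\domain\depth}\vortexstrength_\epsilon(\error_\epsilon+\tfrac{C}{\log\frac1\epsilon})$, and similarly for $U^{\kappa}_\epsilon$.

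The delicate point is the diameter bound, that is, upgrading the near‑maximality of $\positivefirstorderflow{\vortex_\epsilon}$ on $D^{\kappa}_\epsilon$ to genuine geometric smallness. Let $x\in D^{\kappa}_\epsilon$. Splitting $\log\tfrac{\diameter(\domain)}{|x-y|}=\log\tfrac{\diameter(\domain)}{\epsilon}+\log\tfrac{\epsilon}{|x-y|}$ in the defining integral of $\positivefirstorderflow{\vortex_\epsilon}(x)$, estimating the part of $\log\tfrac{\epsilon}{|x-y|}$ on $B(x,\epsilon)$ by the $L^{p}$ bound used to prove \cref{upperBoundFirstOrder}, and discarding the (non‑positive) part of $\log\tfrac{\epsilon}{|x-y|}$ on $\domain\setminus B(x,\epsilon)$, one gets $\positivefirstorderflow{\vortex_\epsilon}(x)\le\tfrac{\depth(x)}{2\pi}\bigl(\tau\vortexstrength_\epsilon\log\tfrac1\epsilon+C\vortexstrength_\epsilon-\int_{\domain\setminus B(x,\epsilon)}\log\tfrac{|x-y|}{\epsilon}\,\positivepart{\vortex_\epsilon}(y)\,\mumeasure(y)\bigr)$. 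Combining this with $\positivefirstorderflow{\vortex_\epsilon}(x)\ge A_\epsilon-\kappa\tfrac{\sup_\domain\depth}{4\pi}\vortexstrength_\epsilon\log\tfrac1\epsilon\ge\bigl(\tfrac{\tau}{2\pi}-\tfrac{\kappa}{4\pi}-o(1)\bigr)\sup_\domain\depth\,\vortexstrength_\epsilon\log\tfrac1\epsilon$ and using $\depth(x)\le\sup_\domain\depth$, the $\log\tfrac1\epsilon$-order terms give first $\depth(x)\ge\sup_\domain\depth\,(1-\tfrac{\kappa}{2\tau})-o(1)$ — in particular $\depth(x)\ge\tfrac12\sup_\domain\depth$ for $\epsilon$ small — and then $\int_{\domain\setminus B(x,\epsilon)}\log\tfrac{|x-y|}{\epsilon}\,\positivepart{\vortex_\epsilon}\,\mumeasure\le C\kappa\,\vortexstrength_\epsilon\log\tfrac1\epsilon$. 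Hence, for $R>1$, $\int_{\domain\setminus B(x,R\epsilon)}\positivepart{\vortex_\epsilon}\,\mumeasure\le\tfrac{C\kappa}{\log R}\,\vortexstrength_\epsilon\log\tfrac1\epsilon$; taking $R=\epsilon^{-(1-\threshold)}$ and choosing $\kappa$ small enough and $\threshold\in(0,1)$ so that $\tfrac{C\kappa}{1-\threshold}<\tfrac12\min(\tau,1-\tau)$, the ball $B(x,\epsilon^{\threshold})$ carries strictly more than half of the positive mass (respectively of the negative mass, running the same argument with $U^{\kappa}_\epsilon$). Two points of $D^{\kappa}_\epsilon$ therefore have intersecting $\epsilon^{\threshold}$-balls, so they lie at distance $<2\epsilon^{\threshold}$; after relabelling $\threshold$ this is the announced bound $\diameter(D^{\kappa}_\epsilon),\diameter(U^{\kappa}_\epsilon)<\epsilon^{\threshold}$.

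Finally, $\depth(X_\epsilon),\depth(Y_\epsilon)\to\sup_\domain\depth$ is \cref{strongConcentrationReference} transported to the sets $D^{\kappa}_\epsilon,U^{\kappa}_\epsilon$: by that theorem every weak‑$*$ accumulation point of the normalised measures $\tfrac{1}{\tau\vortexstrength_\epsilon}\positivepart{\vortex_\epsilon}\mumeasure$ is a Dirac mass at a maximiser of $\depth$, so for each $\eta>0$ these measures eventually place almost all their mass in the $\eta$-neighbourhood of $\{\depth=\sup_\domain\depth\}$; since $B(x,\epsilon^{\threshold})$ carries more than half of that mass for every $x\in D^{\kappa}_\epsilon$ and $\epsilon^{\threshold}\to0$, the set $D^{\kappa}_\epsilon$ must lie in that $\eta$-neighbourhood for $\epsilon$ small, and similarly for $U^{\kappa}_\epsilon$; continuity of $\depth$ then gives $\depth(X_\epsilon)\to\sup_\domain\depth$ and $\depth(Y_\epsilon)\to\sup_\domain\depth$. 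The main obstacle throughout is the step in the third paragraph: extracting quantitative mass concentration (and thereby a diameter that shrinks like a fixed power of $\epsilon$) from the mere near‑maximality of the first order flow, which is why the logarithmic tail estimate and the careful bookkeeping between $\kappa$ and $\threshold$ are needed.
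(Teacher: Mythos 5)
This theorem is imported verbatim from \cite{myArticle} (Corollary~3.1, Theorems~3.1 and~3.2) and the present paper gives no proof of it, so there is no in-text argument to compare yours against line by line; I can only assess your reconstruction on its own terms. What you propose is the standard Turkington-type scheme: squeeze $\energy_\epsilon(\vortex_\epsilon)$ between the competitor lower bound and the pointwise upper bound of \cref{upperBoundFirstOrder}, extract rigidity of the averages $A_\epsilon$, apply Markov's inequality for the mass estimate, and convert the resulting control of $\int_{\domain\setminus B(x,R\epsilon)}\log\frac{|x-y|}{\epsilon}\positivepart{\vortex_\epsilon}\mumeasure$ into a half-mass/intersecting-balls diameter bound. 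This is the right strategy, and the bookkeeping between $\kappa$, $\varsigma$ and the tail estimate in your third paragraph is correct; it is also consistent with the way the paper itself reuses these ideas in its Propositions~\ref{strongConcentration} and~\ref{repulsionLog}.

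Two soft spots should be repaired. First, ``the $F$-term is $O(\vortexstrength_\epsilon^{2})$ since $F$ is bounded'' is not literally true: $F$ contains $-\depth(x)\regulargreenlaplace(x,y)$, and by \cref{regularestimate} the regular part $\regulargreenlaplace$ blows up logarithmically as the arguments approach $\boundary\domain$. For the upper bound on $\energy_\epsilon(\vortex_\epsilon)$ you need the sign $\regulargreenlaplace\geq 0$ (the like-sign $\regulargreenlaplace$-terms only help), and the cross $\regulargreenlaplace$-term must be recombined with the logarithmic cross term into $-\tfrac12\big(\depth(x)+\depth(y)\big)\greenlaplace(x,y)\leq 0$, exactly as in the expansion used to prove \cref{repulsionLog}. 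For the lower bound you must keep the competitor's supports at distance at least of order $\big(\log\frac1\epsilon\big)^{-1}$ from $\boundary\domain$, where $\regulargreenlaplace=O\big(\log\log\frac1\epsilon\big)=o\big(\log\frac1\epsilon\big)$; this also handles the case where $\depth$ attains its supremum only on the boundary, which your choice of a maximizer $X^{\ast}\in\domain$ quietly excludes. Second, your final step invokes \cref{strongConcentrationReference}, which is cited from the same source as the statement you are proving, so as a self-contained derivation this is circular. It is also unnecessary: your own rigidity gives $\frac{1}{\tau\vortexstrength_\epsilon}\int_\domain\depth\,\positivepart{\vortex_\epsilon}\mumeasure\to\sup_\domain\depth$, because $\positivefirstorderflow{\vortex_\epsilon}(x)\leq\frac{\depth(x)}{4\pi}\big(\tau\vortexstrength_\epsilon\log\frac1\epsilon+C\vortexstrength_\epsilon\big)$ while the $\positivepart{\vortex_\epsilon}\mumeasure$-average of $\positivefirstorderflow{\vortex_\epsilon}$ saturates its maximum; Markov then puts most of the mass where $\depth\geq\sup_\domain\depth-\eta$, and since the ball $B(x,\epsilon^{\varsigma})$ around any $x\in D^{\kappa}_\epsilon$ carries more than half of the mass, it must meet that region, which yields $\depth(X_\epsilon)\to\sup_\domain\depth$ directly.
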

\endgroup
\setcounter{theorem}{0} %assign desired value to theorem counter
For any positive measurable function $f$, the notation $\dif f$ used in \cref{weakConcentration} indicates the 
set function defined for all measurable set $A\subseteq\domain$ as
	\[ \dif f(A) := \frac{1}{\norm{f}{L^1_{\mu}}}\int_Af\mumeasure \]
if $f$ is non vanishing; and identically null otherwise.

\subsection{Essential concentration result}
\begin{proposition}\label{strongConcentration}
There exists $\varsigma>0$ such that, for all sufficiently small $\epsilon>0$, and for all $X_\epsilon\in\{\positivepart{\vortex_\epsilon}>0\}$,
for all $Y_\epsilon\in\{\negativepart{\vortex_\epsilon}>0\}$, we have
	\[ \int_{\domain\setminus B(X_\epsilon,\epsilon^\varsigma)}\dif\positivepart{\vortex_\epsilon} = 0 \]
and
	\[ \int_{\domain\setminus B(Y_\epsilon,\epsilon^\varsigma)}\dif\negativepart{\vortex_\epsilon} = 0 .\]
\end{proposition}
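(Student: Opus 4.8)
The plan is to exploit the variational (differential) structure of the maximiser. Since $\vortex_\epsilon$ maximises $\energy_\epsilon$ over its class of $\mu$-rearrangements, the characterisation of such maximisers (Burton's rearrangement theory, \cite{BurtonRearrangementOfFunctions}) provides a non-decreasing function $\phi_\epsilon$ with $\vortex_\epsilon=\phi_\epsilon\circ\vortexoperator\vortex_\epsilon$ $\mu$-almost everywhere; equivalently there are Lagrange multipliers $\nu_\epsilon<\mu_\epsilon$ such that, up to $\mu$-null sets,
\[ \{\positivepart{\vortex_\epsilon}>0\}=\{\vortexoperator\vortex_\epsilon>\mu_\epsilon\},\qquad \{\negativepart{\vortex_\epsilon}>0\}=\{\vortexoperator\vortex_\epsilon<\nu_\epsilon\}. \]
Writing $\vortexoperator\vortex_\epsilon=\vortexoperator(\positivepart{\vortex_\epsilon})-\vortexoperator(\negativepart{\vortex_\epsilon})$ and using the integral representation~\eqref{integralRepresentation}, the boundedness of $F$, and $\negativefirstorderflow{\vortex_\epsilon}\ge0$, one gets the pointwise bounds $\vortexoperator\vortex_\epsilon\le2\positivefirstorderflow{\vortex_\epsilon}+C\vortexstrength_\epsilon$ and $-\vortexoperator\vortex_\epsilon\le2\negativefirstorderflow{\vortex_\epsilon}+C\vortexstrength_\epsilon$ on $\domain$. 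Hence $\{\positivepart{\vortex_\epsilon}>0\}\subseteq\bigl\{\positivefirstorderflow{\vortex_\epsilon}>\tfrac12(\mu_\epsilon-C\vortexstrength_\epsilon)\bigr\}$ and $\{\negativepart{\vortex_\epsilon}>0\}\subseteq\bigl\{\negativefirstorderflow{\vortex_\epsilon}>\tfrac12(-\nu_\epsilon-C\vortexstrength_\epsilon)\bigr\}$, and it suffices to bound the diameters of these super-level sets of the mass-only first-order flows.

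The heart of the matter — and the step I expect to be the main obstacle — is to show that the multipliers have the expected size, $\mu_\epsilon\ge c_0\vortexstrength_\epsilon\log\tfrac1\epsilon$ and $-\nu_\epsilon\ge c_0\vortexstrength_\epsilon\log\tfrac1\epsilon$ for a fixed $c_0>0$. On any connected component $\omega$ of $\{\vortexoperator\vortex_\epsilon>\mu_\epsilon\}$ one has $-\divergence(\depth^{-1}\gradient\vortexoperator\vortex_\epsilon)=\depth\positivepart{\vortex_\epsilon}\ge0$ with $\vortexoperator\vortex_\epsilon=\mu_\epsilon$ on $\boundary\omega$; comparing $\vortexoperator\vortex_\epsilon-\mu_\epsilon$ on $\omega$ with the $\depth^{-1}$-torsion potential of $\omega$, in the spirit of Turkington~\cite{TurkingtonSteady1} and Elcrat~\&~Miller~\cite{ElcratMiller}, and using $\mu(\omega)\le\mu(\{\positivepart{\vortex_\epsilon}>0\})\le\epsilon^2$, controls the oscillation of $\vortexoperator\vortex_\epsilon$ on $\omega$. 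Combining this with a lower bound on the positive self-interaction $\int_\domain\positivefirstorderflow{\vortex_\epsilon}\positivepart{\vortex_\epsilon}\mumeasure\ge c_1\vortexstrength_\epsilon^2\log\tfrac1\epsilon$ — obtained by restricting the defining double integral to the set $D^\kappa_\epsilon$ of \cref{weakConcentration}, which carries all but an $o(\vortexstrength_\epsilon)$ fraction of the positive mass and has diameter $\le\epsilon^\varsigma$, so that $\log\tfrac{\diameter(\domain)}{|x-y|}\ge\varsigma\log\tfrac1\epsilon$ there — and with the sign control $\int_\domain\positivepart{\vortex_\epsilon}\,\vortexoperator(\negativepart{\vortex_\epsilon})\,\mumeasure\ge-C\vortexstrength_\epsilon^2$ (again from $\negativefirstorderflow{\vortex_\epsilon}\ge0$ and bounded $F$), one deduces that if $\mu_\epsilon$ were smaller than $c_0\vortexstrength_\epsilon\log\tfrac1\epsilon$ then $\{\vortexoperator\vortex_\epsilon>\mu_\epsilon\}$ would have to contain a ball of radius bounded away from $0$ about the concentration point, contradicting $\mu(\{\positivepart{\vortex_\epsilon}>0\})\le\epsilon^2$. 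What makes this delicate is that the mutual repulsion of the two cores is only established later in this section, so this lower bound has to be squeezed out of the maximum principle on the super-level set together with the concentration already available from \cref{weakConcentration}, while keeping the constants from degenerating as the distance between the cores possibly shrinks.

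Granting $\mu_\epsilon\gtrsim\vortexstrength_\epsilon\log\tfrac1\epsilon$, the conclusion follows from the logarithmic decay of $\positivefirstorderflow{\vortex_\epsilon}$ away from the mass. Fix $Z_\epsilon\in D^\kappa_\epsilon$ maximising $\positivefirstorderflow{\vortex_\epsilon}$ (it lies in $D^\kappa_\epsilon$, the defining threshold of $D^\kappa_\epsilon$ being strictly below $\max_\domain\positivefirstorderflow{\vortex_\epsilon}$). If $\distance(x,D^\kappa_\epsilon)\ge\epsilon^\sigma$ for a fixed $\sigma\in(0,\varsigma)$, split the integral defining $\positivefirstorderflow{\vortex_\epsilon}(x)$ into the part over $D^\kappa_\epsilon$, where $|x-y|\ge\epsilon^\sigma$ forces $\log\tfrac{\diameter(\domain)}{|x-y|}\le\sigma\log\tfrac1\epsilon+C$ and hence a contribution $\le\tfrac{\tau\sup_\domain\depth}{4\pi}\,\sigma\,\vortexstrength_\epsilon\log\tfrac1\epsilon+O(\vortexstrength_\epsilon)$, and the part over $\domain\setminus D^\kappa_\epsilon$, which by the a priori estimate established at the beginning of this section together with~\eqref{distribConstraint} and the smallness of the tail mass (a near/far splitting of the remaining integral) is $o(\vortexstrength_\epsilon\log\tfrac1\epsilon)$. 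Choosing $\sigma$ so small that $\tfrac{\tau\sup_\domain\depth}{4\pi}\sigma<\tfrac12 c_0$, we obtain $\positivefirstorderflow{\vortex_\epsilon}(x)<\tfrac12(\mu_\epsilon-C\vortexstrength_\epsilon)$ for all $\epsilon$ small, so $x\notin\{\positivepart{\vortex_\epsilon}>0\}$. Hence $\{\positivepart{\vortex_\epsilon}>0\}$ lies in the $\epsilon^\sigma$-neighbourhood of $D^\kappa_\epsilon$, whose diameter is at most $\epsilon^\varsigma+2\epsilon^\sigma\le3\epsilon^\sigma$; therefore, for every $X_\epsilon\in\{\positivepart{\vortex_\epsilon}>0\}$ we have $\{\positivepart{\vortex_\epsilon}>0\}\subseteq B(X_\epsilon,3\epsilon^\sigma)\subseteq B(X_\epsilon,\epsilon^{\sigma/2})$ once $\epsilon$ is small, which is the assertion with $\varsigma:=\sigma/2$. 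The negative part is handled identically, using $-\nu_\epsilon\gtrsim\vortexstrength_\epsilon\log\tfrac1\epsilon$ and the flow $\negativefirstorderflow{\vortex_\epsilon}$.
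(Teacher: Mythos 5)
Your overall strategy coincides with the paper's: both adapt Turkington's device by using Burton's characterisation of the maximiser to obtain a level-set structure for the stream function, proving that the Lagrange multiplier is of order $\vortexstrength_\epsilon\log\frac{1}{\epsilon}$, and concluding that the support of $\positivepart{\vortex_\epsilon}$ is contained in the small set $D^\kappa_\epsilon$ of \cref{weakConcentration}. However, there are genuine gaps at exactly the two points you yourself flag as delicate. First, Burton's theorem characterises the maximiser through level sets of the \emph{full} stream function $\psi=\vortexoperator\vortex_\epsilon+\rectifycirculation_\epsilon\vortex_\epsilon$, where the harmonic part $\rectifycirculation_\epsilon\vortex_\epsilon=\sum_i\alpha^\epsilon_i(\staticflow_i-\circulation_i)$ carries the circulations around the islands; you drop it and work with $\vortexoperator\vortex_\epsilon$ alone. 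The constants $\alpha^\epsilon_i$ are not a priori $O(\vortexstrength_\epsilon)$, and a substantial part of the paper's proof is the bookkeeping (the constants $\beta_{\epsilon;i}$ and the proof that $\gamma^+_\epsilon\geq\max_i\beta_{\epsilon;i}$ for small $\epsilon$) needed to neutralise them; in a simply connected lake your simplification is harmless, but the paper explicitly allows islands.

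Second, and more seriously, your control of the cross term points in the wrong direction. To force $\mu_\epsilon\gtrsim\vortexstrength_\epsilon\log\frac{1}{\epsilon}$ you must bound $\int_\domain\psi\,\positivepart{\vortex_\epsilon}\mumeasure$ from \emph{below}, and since $\vortexoperator\vortex_\epsilon=\vortexoperator(\positivepart{\vortex_\epsilon})-\vortexoperator(\negativepart{\vortex_\epsilon})$ this requires an \emph{upper} bound on $\int_\domain\positivepart{\vortex_\epsilon}\,\vortexoperator(\negativepart{\vortex_\epsilon})\,\mumeasure$. The positivity of $\negativefirstorderflow{\vortex_\epsilon}$ only yields the lower bound $\geq-C\vortexstrength_\epsilon^2$ that you state, which is useless here; the available upper bound, from \cref{upperBoundFirstOrder}, is of order $\vortexstrength_\epsilon^2\log\frac{1}{\epsilon}$, i.e.\ of the same order as the self-interaction you are trying to retain. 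This is precisely the ``attraction of the two nearby cores'' obstruction you correctly identify, but your proposed sign control does not resolve it. Finally, the oscillation bound on the core --- the heart of Turkington's argument --- is only gestured at: a comparison of $\psi-\gamma^+_\epsilon$ with a torsion potential requires $L^\infty$ control of the vorticity, whereas only the $L^p$ bound \eqref{distribConstraint} is assumed, so you would need to substitute an $L^p$ elliptic estimate. The paper instead obtains the bound from the energy identity $\int_\domain\depth^{-1}|\gradient\phi|^2\dif x=\int_\domain\phi\,\vortex_\epsilon\mumeasure-\text{(circulation terms)}$ for $\phi=\positivepart{\psi-\gamma^+_\epsilon}$, closed by a Sobolev inequality exploiting $\mu\big(\{\psi>\gamma^+_\epsilon\}\big)\leq\epsilon^2$ together with \eqref{distribConstraint}; until these steps are carried out, the proposal is a plausible outline rather than a proof.
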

The proof is an adaptation of techniques due to Turkington~\citelist{\cite{TurkingtonSteady1}\cite{TurkingtonSteady2}}
(see also~Elcrat~\&~Miller~\cite{ElcratMiller} for a similar problem). Our situation is very similar
but the computations turn out to be a bit more involved
due to the presence of islands in the lake ($\domain$ may not be simply connected) and because the vortex changes sign.
\begin{proof}
Fix $\epsilon>0$. We prove the claim for the positive part
$\positivepart{\vortex_\epsilon}$ only. The claim for the negative part
follows by symmetry.
According to \cref{weakConcentration}, there exists $\kappa>0$
and $\varsigma>0$ such that for all sufficiently small $\epsilon>0$, the diameter of the set
	\[ D^\kappa_\epsilon = \bigg\{ x\in\domain : \positivefirstorderflow{\vortex_\epsilon}(x) \geq \int_\domain\positivefirstorderflow{\vortex_\epsilon}\dif\positivepart{\vortex_\epsilon} - \kappa\frac{\sup_\domain\depth}{4\pi}\vortexstrength_\epsilon\log\frac{1}{\epsilon} \bigg\} \]
is smaller than $\epsilon^\varsigma$. Furthermore, we have
	\[ \int_{\domain\setminus D^\kappa_\epsilon}\positivepart{\vortex_\epsilon}\mumeasure
		\leq \frac{1}{\kappa}\frac{4\pi}{\sup_\domain\depth}\vortexstrength_\epsilon \bigg( \error_\epsilon + \frac{C}{\log\frac{1}{\epsilon}} \bigg) ,\]
for some family of numbers $\{\error_\epsilon:\epsilon>0\}$
with $\lim\limits_{\epsilon\to 0}\error_\epsilon = 0$.
We are going to prove that the latter estimate may be improved as
		\[ \int_{\domain\setminus D^\kappa_\epsilon}\positivepart{\vortex_\epsilon}\mumeasure = 0 .\]
Let us first decompose the stream function $\psi$ associated with $\vortex_\epsilon$
through the elliptic problem~\eqref{ellipticproblem}, as the sum
	\[ \psi = \vortexoperator\vortex_\epsilon + \rectifycirculation_\epsilon\vortex_\epsilon,
		\qquad \rectifycirculation_\epsilon\vortex_\epsilon = \sum^\nbislands_{i=0}\alpha^\epsilon_i(\staticflow_i-\circulation_i) .\]
Since $\vortex_\epsilon$ is a maximizer of the strictly convex functional $\energy_\epsilon$ over $\rearrangement(\vortex_\epsilon)$,
it is also a maximizer over the weak closure
$\weakclosure{\rearrangement(\vortex_\epsilon)}$, by
weak continuity of $\energy_\epsilon$~\cite{myArticle}*{Section~2}. Since the latter compact set
is also convex in $L^p(\domain,\mu)$~\cite{BurtonRearrangementOfFunctions}*{Theorem~6},
it is straightforward to check that $\vortex_\epsilon$ is the only maximizer of the linear functional
	\[ L : L^p(\domain,\mu)\to \reals: L(\vortex) = \int_\domain (\vortexoperator+\rectifycirculation_\epsilon)(\vortex_\epsilon)\,\vortex\mumeasure \]
over the set $\rearrangement(\vortex_\epsilon)$. Indeed, observe that $(\vortexoperator+\rectifycirculation_\epsilon)(\vortex_\epsilon)$
belongs to the subgradient of $\energy_\epsilon$ at point $\vortex_\epsilon$, so that by definition of subgradient at $\vortex_\epsilon$
we have, for all $\vortex\in L^p(\domain,\mu)$:
	\[ \energy_\epsilon(\vortex)-\energy_\epsilon(\vortex_\epsilon)
		\geq L(\vortex - \vortex_\epsilon) .\]
Taking $\vortex\in\rearrangement(\vortex_\epsilon)$ and $\lambda\in(0,1)$, we define $\vortex_\lambda=\lambda\vortex+(1-\lambda)\vortex_\epsilon$.
Hence we have
	\[ \energy_\epsilon(\vortex_\lambda) - \energy_\epsilon(\vortex_\epsilon)
		\geq L(\vortex_\lambda - \vortex_\epsilon) .\]
By linearity of $L$ and strict convexity of $\energy_\epsilon$, we have
	\[ \lambda\energy_\epsilon(\vortex) + (1-\lambda)\energy_\epsilon(\vortex_\epsilon) - \energy_\epsilon(\vortex_\epsilon)
		> \lambda L(\vortex) + (1-\lambda)L(\vortex_\epsilon) -L(\vortex_\epsilon) ,\]
Since the weak closure $\weakclosure{\rearrangement(\vortex_\epsilon)}$ is convex~\cite{BurtonRearrangementOfFunctions}*{Theorem~6},
we obtain
	\[ 0\geq \energy_\epsilon(\vortex) - \energy_\epsilon(\vortex_\epsilon) > L(\vortex) - L(\vortex_\epsilon) ,\]
and thus $\vortex_\epsilon$ strictly maximizes $L$ over its set of $\mu$-rearrangements $\rearrangement(\vortex_\epsilon)$.
In particular~\cite{BurtonRearrangementOfFunctions}*{Theorem~5},
there exists $\gamma_\epsilon^+>0$ such that
	\[ \big\{ \vortexoperator(\vortex_\epsilon)+\rectifycirculation_\epsilon(\vortex_\epsilon) > \gamma_\epsilon^+ \big\}
		\subseteq \big\{ \vortex_\epsilon>0 \big\} \subseteq \big\{ \vortexoperator(\vortex_\epsilon)+\rectifycirculation_\epsilon(\vortex_\epsilon) \geq \gamma_\epsilon^+ \big\} .\]
Let $\phi=\positivepart{\vortexoperator(\vortex_\epsilon)+\rectifycirculation_\epsilon(\vortex_\epsilon)-\gamma_\epsilon^+}$.
It is direct to prove that
	\[ \phi = \phi^\sharp + \sum^\nbislands_{i=0}\positivepart{\beta_{\epsilon;i}-\gamma_\epsilon^+}\staticflow_i ,\]
for some $\phi^\sharp\in\functionspace$ and for
	\[ \beta_{\epsilon;i}
		= \alpha_{\epsilon;i} - \sum^\nbislands_{j=0}\alpha_{\epsilon;j}\circulation_j .\]
Now we observe that
	\begin{align*}
		\int_\domain\scalarproduct{\gradient\phi}{\gradient\phi}{\plane}\ \frac{\dif x}{\depth}
			&= \int_\domain\scalarproduct{\gradient\phi}{\gradient\big(\vortexoperator(\vortex_\epsilon)+\rectifycirculation_\epsilon(\vortex_\epsilon)\big)}{\plane}\ \frac{\dif x}{\depth}
			\\&= \int_\domain\scalarproduct{\gradient\phi^\sharp}{\gradient\vortexoperator(\vortex_\epsilon)}{\plane}\ \frac{\dif x}{\depth}
				+ \sum^\nbislands_{j=0}\positivepart{\beta_{\epsilon;j}-\gamma_\epsilon^+}
					\int_\domain\scalarproduct{\gradient\rectifycirculation_\epsilon(\vortex_\epsilon)}{\gradient\staticflow_j}{\plane}\ \frac{\dif x}{\depth}
			\\&= \int_\domain\phi^\sharp\vortex_\epsilon\mumeasure
				+ \sum^\nbislands_{j=0}\positivepart{\beta_{\epsilon;j}-\gamma_\epsilon^+}
					\int_\domain\scalarproduct{\gradient\rectifycirculation_\epsilon(\vortex_\epsilon)}{\gradient\staticflow_j}{\plane}\ \frac{\dif x}{\depth} .
	\end{align*}
By construction of $\alpha_{\epsilon;0},\dots,\alpha_{\epsilon,\nbislands}$, we have for all $j\in\{0,\dots,\nbislands\}$:
	\begin{equation}\label{memo1}
	\int_\domain\scalarproduct{\gradient\phi}{\gradient\phi}{\plane}\ \frac{\dif x}{\depth}
		= \int_\domain\phi\vortex_\epsilon\mumeasure -  \sum^\nbislands_{j=0}\positivepart{\beta_{\epsilon;j}-\gamma_\epsilon^+}\circulation_j (2\tau-1)\vortexstrength_\epsilon .
	\end{equation}
This may also be written as
	\begin{multline*}
		\int_\domain\scalarproduct{\gradient\phi}{\gradient\phi}{\plane}\ \frac{\dif x}{\depth}
		= \int_\domain\big( \phi  - \positivepart{\beta_{\epsilon;0}-\gamma_\epsilon^+} \big)\vortex_\epsilon\mumeasure
			\\+ (2\tau-1)\vortexstrength_\epsilon\bigg(
				\positivepart{\beta_{\epsilon;0}-\gamma_\epsilon^+} - \sum^\nbislands_{j=0}\positivepart{\beta_{\epsilon;j}-\gamma_\epsilon^+}\circulation_j
			\bigg) .
	\end{multline*}
Observe that $u=\phi-\positivepart{\beta_{\epsilon;0}-\gamma_\epsilon^+}$ belongs to $W^{1,2}_0(\domain_0)$.
In particular, if $p\in(1,2)$, one may choose $q>2$ such that
$q^\star=2q(2-q)=p'=p/(p-1)$, so that by Sobolev's inequality in
$W^{1,q}_0(\domain_0)$, we obtain
	\[ \norm{u}{L^{p'}(\domain)} \leq \norm{u}{L^{p'}(\domain_0)}
		\leq \ncste\norm{\gradient u}{L^q(\domain_0)} = \cste\norm{\gradient u}{L^q(\domain)}
		\leq \ncste \mu\big(\{u>0\}\big)^{\frac{1}{p'}}
			\norm{\gradient u}{\functionspace} , \]
where in the last step we have used Hölder's inequality in
$L^{2/q}(\domain,\depth^{-1}\lebesguemeasure)$. The same estimate holds whenever $p\geq 2$,
because the control condition~\eqref{criterionconvergence} holds with $p$ replaced by $q\in(1,p]$,
by Hölder's inequality. From this we conclude that
	\[ \int_\domain u\vortex_\epsilon\mumeasure
		\leq \ncste\vortexstrength_\epsilon\ \norm{u}{\functionspace} .\]
Hence we infer the estimate
	\[ \int_\domain\scalarproduct{\gradient\phi}{\gradient\phi}{\plane}\ \frac{\dif x}{\depth}
		\leq C\vortexstrength_\epsilon \Bigg(\int_\domain\scalarproduct{\gradient\phi}{\gradient\phi}{\plane}\ \frac{\dif x}{\depth}\Bigg)^{\frac{1}{2}}
			+ C\vortexstrength_\epsilon \bigg(
				\positivepart{\beta_{\epsilon;0}-\gamma_\epsilon^+} - \sum^\nbislands_{j=0}\positivepart{\beta_{\epsilon;j}-\gamma_\epsilon^+}\circulation_j
			\bigg) ,\]
from which we deduce that
	\begin{equation}\label{eq1.strongConcentration}
	\Bigg(\int_\domain\scalarproduct{\gradient\phi}{\gradient\phi}{\plane}\ \frac{\dif x}{\depth}\Bigg)^{\frac{1}{2}}
		\leq \frac{C\vortexstrength_\epsilon + \sqrt{C^2\vortexstrength_\epsilon^2 + 4C\vortexstrength_\epsilon\bigg(
				\positivepart{\beta_{\epsilon;0}-\gamma_\epsilon^+} - \sum^\nbislands_{j=0}\positivepart{\beta_{\epsilon;j}-\gamma_\epsilon^+}\circulation_j
			\bigg) }}{2} .
	\end{equation}
Although $\gamma_\epsilon^+$ may be of great order in comparison with $\vortexstrength_\epsilon$,
the particular structure of the above estimate allows us to write, for all $\epsilon>0$ sufficiently small:
	\[ \Bigg(\int_\domain\scalarproduct{\gradient\phi}{\gradient\phi}{\plane}\ \frac{\dif x}{\depth}\Bigg)^{\frac{1}{2}}
		\leq \ncste\vortexstrength_{\epsilon}. \]
Injecting this estimate in equation~\eqref{memo1}, and using the definition
of $\phi$, we obtain
	\[ \ncste\vortexstrength_\epsilon^2
		\geq \int_\domain\big(\vortexoperator\vortex_\epsilon + \rectifycirculation_\epsilon\vortex_\epsilon\big)\positivepart{\vortex_\epsilon}\mumeasure
			- \gamma^+_\epsilon\int_\domain\positivepart{\vortex_\epsilon}\mumeasure
			- \sum^\nbislands_{j=0}\positivepart{\beta_{\epsilon;j}-\gamma_\epsilon^+}\circulation_j \int_\domain\vortex_\epsilon\mumeasure \geq 0 . \]
According to \cref{upperBoundFirstOrder}, we have
	\[ \gamma^+_\epsilon\tau\vortexstrength_\epsilon
			+ (2\tau-1)\vortexstrength_\epsilon\sum^\nbislands_{j=0}\positivepart{\beta_{\epsilon;j}-\gamma_\epsilon^+}\circulation_j
		\geq \int_\domain\positivepart{\vortex_\epsilon}\positivefirstorderflow{\vortex_\epsilon}\mumeasure
		- \ncste\vortexstrength_\epsilon^2\bigg(\frac{1}{\log\frac{1}{\epsilon}}+\error_\epsilon\bigg)\log\frac{1}{\epsilon}. \]
As $\epsilon\to 0$, the right hand side blows up as $\vortexstrength_\epsilon^2\log\frac{1}{\epsilon}$
by the concentration result in \cref{weakConcentration}.
This forces to have $\gamma_\epsilon^+\geq\max\limits_{0\leq i\leq\nbislands}\beta_{\epsilon;i}$ for all $\epsilon>0$ sufficiently small.
For all sufficiently small $\epsilon>0$, we thus have
	\[ \gamma^+_\epsilon
		\geq \int_\domain\positivefirstorderflow{\vortex_\epsilon}\dif\positivepart{\vortex_\epsilon}
		- \ncste\vortexstrength_\epsilon\bigg(\frac{1}{\log\frac{1}{\epsilon}}+\error_\epsilon\bigg)\log\frac{1}{\epsilon}. \]
For all $x\in\{\vortex_\epsilon>0\}$, we obtain by definition of $\gamma_\epsilon^+$
	\[ \vortexoperator\vortex_\epsilon(x) + \rectifycirculation_\epsilon\vortex_\epsilon(x)
		\geq \gamma_\epsilon^+
		\geq \int_\domain\positivefirstorderflow{\vortex_\epsilon}\dif\positivepart{\vortex_\epsilon}
		- \ncste\vortexstrength_\epsilon\bigg(\frac{1}{\log\frac{1}{\epsilon}}+\error_\epsilon\bigg)\log\frac{1}{\epsilon} .\]
Using the fact that the Green's function $\greenlaplace$ is a positive function, and the uniform bounds on $\rectifykernel$ and the flows $\staticflow_0,\dots,\staticflow_{\nbislands}$,
we conclude the lower estimate, for all sufficiently small $\epsilon>0$ and for all $x\in\{\vortex_\epsilon>0\}$:
	\[ \positivefirstorderflow{\vortex_\epsilon}(x)\geq \int_\domain\positivefirstorderflow{\vortex_\epsilon}\dif\positivepart{\vortex_\epsilon}
		- \kappa\vortexstrength_\epsilon\log\frac{1}{\epsilon} .\]
In particular, we have
	\[ \big\{\positivepart{\vortex_\epsilon}>0\big\} = \big\{\vortex_\epsilon>0\big\} \subseteq \big\{\vortexoperator\vortex_\epsilon+\rectifycirculation_\epsilon\vortex_\epsilon\geq\gamma^+_\epsilon\big\} \subseteq D^\kappa_\epsilon ,\]
which proves the claim for the positive part of the vortex.
\end{proof}

\section{Repulsion effects}

\subsection{Riesz-Sobolev rearrangement inequality}
A important feature in the theory of standard symmetrization is the use of radial competitors
together with geometric inequalities. In this direction, we are going to prove a variant of the well-known
Riesz-Sobolev rearrangement inequality. One should pay attention that we do not work with the Lebesgue measure,
but with the weighted measure $\mu(x)=\depth(x)\dif x$. This measure may not behave nicely
with respect to geometric transformations. Rather than assuming geometric conditions on $\depth$,
we propose an \emph{asymptotic variant} of the Riesz-Sobolev inequality. This will be sufficient for our purposes.

We first require the following standard lemma:
\begin{proposition}\label{symmetrizearoundpoint}
For all $x\in\domain$, there exists a function $\symmetrizearoundpoint{x}{\cdot}:L^1(\domain,\mu)\to L^1(\domain,\mu)$
such that for all positive function $\vortex\in L^1(\domain,\mu)$ we have
	\begin{enumerate}
		\item $\symmetrizearoundpoint{x}{\vortex}\in\rearrangement(\vortex)$;
		\item the superlevel sets of $\symmetrizearoundpoint{x}{\vortex}$ are balls (in $\domain$) centered on $x$.
	\end{enumerate}
Furthermore, for all $x\in\domain$ and for all positive functions $\vortex_1,\vortex_2\in L^1(\domain,\mu)$, we have
	\[ \norm{\symmetrizearoundpoint{x}{\vortex_1} - \symmetrizearoundpoint{x}{\vortex_2}}{L^1_{\mu}}
		\leq \norm{\vortex_1-\vortex_2}{L^1_{\mu}} .\]
\end{proposition}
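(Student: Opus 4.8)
The plan is to realise $\symmetrizearoundpoint{x}{\cdot}$ as the monotone (decreasing) rearrangement of $\vortex$ along the exhaustion of $\domain$ by the balls centred at $x$, and then to read off the equimeasurability and the $L^1$-contraction from the layer-cake formula; no geometric regularity of $\domain$ or $\depth$ will be needed, only the fact that $\mu$ is nonatomic. Concretely, I would first record the elementary properties of the volume function $V_x(r):=\mu(B(x,r)\cap\domain)$, $r\geq 0$: it is nondecreasing with $V_x(0)=0$ and $V_x(r)=\mu(\domain)$ for $r\geq\diameter(\domain)$, and it is continuous since spheres are $\mu$-null (because $\mumeasure=\depth\dif x$ is absolutely continuous with respect to the Lebesgue measure); hence $V_x$ maps $[0,\infty)$ onto $[0,\mu(\domain)]$. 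For $v\in[0,\mu(\domain)]$ put $\rho_x(v):=\sup\{r\geq 0:V_x(r)<v\}$ (with $\sup\emptyset=0$); the continuity of $V_x$ and the intermediate value theorem give $V_x(\rho_x(v))=v$, and since $\{r\geq0:V_x(r)<v\}$ is an interval containing $0$ one obtains the pullback identity
	\[ \{y\in\domain:V_x(|y-x|)<v\}=B(x,\rho_x(v))\cap\domain .\]

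Next, given a positive $\vortex\in L^1(\domain,\mu)$, I would introduce its distribution function $m_\vortex(t):=\mu(\{\vortex>t\})$ and its one-dimensional decreasing rearrangement $\widetilde\vortex:[0,\mu(\domain)]\to[0,+\infty]$, $\widetilde\vortex(s):=\sup\{t\geq0:m_\vortex(t)>s\}$, and set
	\[ \big(\symmetrizearoundpoint{x}{\vortex}\big)(y):=\widetilde\vortex(V_x(|y-x|)),\qquad y\in\domain .\]
This is measurable, being the composition of a continuous map with the monotone (hence Borel) map $\widetilde\vortex$. From right-continuity and monotonicity of $m_\vortex$ one has the standard identity $\{\widetilde\vortex>t\}=[0,m_\vortex(t))$, which combined with the pullback identity gives $\{\symmetrizearoundpoint{x}{\vortex}>t\}=B(x,\rho_x(m_\vortex(t)))\cap\domain$ — this is item~(2) — and
	\[ \mu(\{\symmetrizearoundpoint{x}{\vortex}>t\})=V_x(\rho_x(m_\vortex(t)))=m_\vortex(t)=\mu(\{\vortex>t\}) \]
for every $t\geq0$. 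Letting $t\uparrow\lambda$ turns this into equality of the $\{\cdot\geq\lambda\}$-measures, so $\symmetrizearoundpoint{x}{\vortex}\in\rearrangement(\vortex)$, which is item~(1); moreover the layer-cake formula gives $\norm{\symmetrizearoundpoint{x}{\vortex}}{L^1_\mu}=\int_0^\infty m_\vortex(t)\dif t=\norm{\vortex}{L^1_\mu}$, so $\symmetrizearoundpoint{x}{\cdot}$ does map positive elements of $L^1(\domain,\mu)$ into $L^1(\domain,\mu)$.

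For the contraction I would use the identity $\norm{f-g}{L^1_\mu}=\int_0^\infty\mu(\{f>t\}\triangle\{g>t\})\dif t$, valid for nonnegative $f,g\in L^1(\domain,\mu)$, which follows by writing $|f(y)-g(y)|=\int_0^\infty|\chi_{\{f>t\}}(y)-\chi_{\{g>t\}}(y)|\dif t$ and applying Tonelli's theorem. Applied to $f=\symmetrizearoundpoint{x}{\vortex_1}$ and $g=\symmetrizearoundpoint{x}{\vortex_2}$, the superlevel sets at level $t$ are the concentric balls $B(x,\rho_x(m_{\vortex_1}(t)))\cap\domain$ and $B(x,\rho_x(m_{\vortex_2}(t)))\cap\domain$, so their symmetric difference is an annular region in $\domain$ whose $\mu$-measure equals $|V_x(\rho_x(m_{\vortex_1}(t)))-V_x(\rho_x(m_{\vortex_2}(t)))|=|m_{\vortex_1}(t)-m_{\vortex_2}(t)|$; hence
	\[ \norm{\symmetrizearoundpoint{x}{\vortex_1}-\symmetrizearoundpoint{x}{\vortex_2}}{L^1_\mu}=\int_0^\infty|m_{\vortex_1}(t)-m_{\vortex_2}(t)|\dif t .\]
It then remains to combine this with the elementary bound $|m_{\vortex_1}(t)-m_{\vortex_2}(t)|=|\mu(\{\vortex_1>t\})-\mu(\{\vortex_2>t\})|\leq\mu(\{\vortex_1>t\}\triangle\{\vortex_2>t\})$ and to integrate in $t$, the right-hand side of which is $\norm{\vortex_1-\vortex_2}{L^1_\mu}$ by the same layer-cake identity applied to $\vortex_1$ and $\vortex_2$.

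The argument is essentially bookkeeping with distribution functions and generalised inverses, and I do not expect a deep obstacle; the only points worth a line of care are the behaviour of $V_x$ as it reaches the full mass $\mu(\domain)$ and the fact that the balls $B(x,r)\cap\domain$ have the islands $\curve_1,\dots,\curve_\nbislands$ removed — but since $V_x$ enters only through its values, and $\mu$ is nonatomic, continuity of $V_x$ is all that is required and neither point causes any trouble.
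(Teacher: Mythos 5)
Your proposal is correct and follows essentially the same route as the paper: the explicit construction via the volume function $V_x$ and the decreasing rearrangement is exactly the ``standard construction'' the paper delegates to \cite{myArticle}*{Proposition~2.1}, and your contraction argument — reducing to the inclusion of concentric balls, bounding $|m_{\vortex_1}(t)-m_{\vortex_2}(t)|$ by $\mu(\{\vortex_1>t\}\triangle\{\vortex_2>t\})$, and integrating in $t$ — is the paper's own Cavalieri argument. The only difference is that you spell out the construction the paper merely cites; no gap.
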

\begin{proof}
The first part of the claim was proved in~\cite{myArticle}*{Proposition~2.1} and it is a standard construction
in the field of symmetrizations.
For the second claim, let $x\in\domain$ and $\vortex_1,\vortex_2\in L^1(\domain,\mu)$.
Fix $\lambda\in\reals^+$. By construction, the sets $\{\symmetrizearoundpoint{x}{\vortex_1}\geq\lambda\}$ and $\{\symmetrizearoundpoint{x}{\vortex_2}\geq\lambda\}$
are balls centered on $x$. Without loss of generality, assume that
	\[ \mu\big(\{\symmetrizearoundpoint{x}{\vortex_1}\geq\lambda\}\big)
		\leq \mu\big(\{\symmetrizearoundpoint{x}{\vortex_2}\geq\lambda\}\big) ,\]
so that $\{\symmetrizearoundpoint{x}{\vortex_1}\geq\lambda\}\subseteq\{\symmetrizearoundpoint{x}{\vortex_2}\geq\lambda\}$.
One then estimates through a direct computation
	\[ \mu\big( \{\symmetrizearoundpoint{x}{\vortex_1}\geq\lambda\}\Delta \{\symmetrizearoundpoint{x}{\vortex_2}\geq\lambda\} \big)
		\leq \mu\big(\{\vortex_1\geq\lambda\}\Delta\{\vortex_2\geq\lambda\} \big) .\]
The conclusion for the $L^1(\domain,\mu)$-norms follows from Cavalieri's principle.
\end{proof}

\begin{proposition}[Asymptotic Riesz-Sobolev rearrangement inequality]\label{RieszSobolev}
Let $R>0$ and for all $\epsilon>0$, let $X^\star_\epsilon\in\domain$
be such that
$\mu\big(B(X^\star_\epsilon,R)\big)\geq \epsilon^2$
with $\inf\limits_{B(X^\star_\epsilon,R)}\depth>0$.
There exists $C>0$ such that, for all sufficiently small
$\epsilon>0$ and for all $X\in\{\positivepart{\vortex_\epsilon}>0\}$, we have
	\begin{multline*}
		\iint\limits_{\domain\times\domain}\log\frac{\diameter(\domain)}{|x-y|}
			\dif\positivepart{\vortex_\epsilon}
			\dif\positivepart{\vortex_\epsilon}(x,y)
			\leq \iint\limits_{\domain\times\domain}\log\frac{\diameter(\domain)}{|x-y|}
				\dif\symmetrizearoundpoint{X^\star_\epsilon}{\positivepart{\vortex_\epsilon}}
				\dif\symmetrizearoundpoint{X^\star_\epsilon}{\positivepart{\vortex_\epsilon}}(x,y)
		\\ +\log\sqrt{\frac{\depth(X)}{\depth(X^\star_\epsilon)}}+ C\Big( \omega_{\depth}\big(\diameter(\{\symmetrizearoundpoint{X^\star_\epsilon}{\positivepart{\vortex_\epsilon}}>0\})\big)
			+ \omega_{\depth}\big(\diameter(\{\positivepart{\vortex_\epsilon}>0\})\big) \Big) .
	\end{multline*}
\end{proposition}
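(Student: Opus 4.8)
The plan is to transport both interaction integrals to a constant‑weight (Lebesgue) setting through a measure‑straightening change of variables, apply the classical Riesz--Sobolev rearrangement inequality there, and observe that the only mismatch between the two sides is a dilation, which produces exactly the term $\log\sqrt{\depth(X)/\depth(X^\star_\epsilon)}$.

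Write $f=\positivepart{\vortex_\epsilon}$, $m=\int_\domain f\mumeasure=\tau\vortexstrength_\epsilon$, $g=\symmetrizearoundpoint{X^\star_\epsilon}{f}$, $\lambda_0=\depth(X)/\depth(X^\star_\epsilon)$, and replace $\log\frac{\diameter(\domain)}{|x-y|}$ by the genuine radially decreasing kernel $W(z)=\positivepart{\log\frac{\diameter(\domain)}{|z|}}$, with which it coincides on $\domain\times\domain$; the two sides of the statement are then $A=\tfrac1{m^2}\iint W(x-y)f(x)f(y)\mumeasure\mumeasure$ and $B=\tfrac1{m^2}\iint W(x-y)g(x)g(y)\mumeasure\mumeasure$. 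By \cref{strongConcentration} the essential support of $f$ sits in $B(X,\epsilon^\varsigma)$, so $\diameter(\{f>0\})\to0$; by \cref{weakConcentration}, using $\{f>0\}\subseteq D^\kappa_\epsilon$, one has $\depth(X)\to\sup_\domain\depth>0$ uniformly in $X$; and since $\mu(\{g>0\})=\mu(\{f>0\})\le\epsilon^2\le\mu(B(X^\star_\epsilon,R))$ while $\depth$ is bounded below on $B(X^\star_\epsilon,R)$, the ball $\{g>0\}$ is contained in $B(X^\star_\epsilon,R)$ and has radius $O(\epsilon)$, so $\diameter(\{g>0\})\to0$ too. Hence, with $\omega_1=\omega_{\depth}(\diameter(\{f>0\}))$ and $\omega_2=\omega_{\depth}(\diameter(\{g>0\}))$, the depth $\depth$ stays within $\omega_1$ of $\depth(X)$ on $\{f>0\}$ and within $\omega_2$ of $\depth(X^\star_\epsilon)$ on $\{g>0\}$.

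The heart of the argument is a radial, measure‑straightening change of variables near $X$. Set $s(r)=\big(\int_{B(X,r)}\depth\,/(\pi\depth(X))\big)^{1/2}$ and $\Phi(X+z)=X+s(|z|)\,z/|z|$; this is a radial homeomorphism with $\Phi_*(\depth\dif x)=\depth(X)\dif x$ on $\Phi(B(X,\epsilon^\varsigma))$. The quantitative key is that $\Phi$ is $(1+O(\omega_1))$‑bi‑Lipschitz there: its tangential stretch $s(r)/r$ lies in $[1-C\omega_1,1+C\omega_1]$ by a direct estimate, and its radial stretch satisfies $s'(r)=r\,\overline{\depth}(r)/(s(r)\depth(X))$ with $\overline{\depth}(r)$ the spherical average of $\depth$ over $\partial B(X,r)$, so it lies in the same interval (using $\depth(X)$ bounded below). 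Consequently $\Phi$ distorts $|x-y|$ only by a factor $e^{O(\omega_1)}$, i.e.\ it perturbs $W$ by an \emph{additive} $O(\omega_1)$; this additivity --- as opposed to a relative error against a kernel of size $\log\tfrac1\epsilon$ --- is exactly what keeps the final error $O(\omega_{\depth}(\diameter))$. Pushing $f\mumeasure$ forward and changing variables, $A=\tfrac{\depth(X)^2}{m^2}\iint W(x-y)F(x)F(y)\dif x\dif y+O(\omega_1)$, where $F=f\circ\Phi^{-1}$ satisfies $\lebesguemeasure(\{F\ge\lambda\})=\mu(\{f\ge\lambda\})/\depth(X)$ for every $\lambda$ (so $\int F=m/\depth(X)$ by Cavalieri) and is supported in a ball of radius $O(\epsilon)$. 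Running the same construction about $X^\star_\epsilon$ --- legitimate because $\inf_{B(X^\star_\epsilon,R)}\depth>0$ and $g$ is already radial about $X^\star_\epsilon$ --- gives $B=\tfrac{\depth(X^\star_\epsilon)^2}{m^2}\iint W(x-y)G(x)G(y)\dif x\dif y+O(\omega_2)$, with $G$ radial about $X^\star_\epsilon$, $\lebesguemeasure(\{G\ge\lambda\})=\mu(\{f\ge\lambda\})/\depth(X^\star_\epsilon)$, and $\int G=m/\depth(X^\star_\epsilon)$.

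It remains to compare the two constant‑weight integrals. Riesz--Sobolev (with $W=W^\star$) gives $\iint W(x-y)F(x)F(y)\dif x\dif y\le\iint W(x-y)F^\star(x)F^\star(y)\dif x\dif y$, where $F^\star$ is the symmetric decreasing rearrangement of $F$; and the level‑set identities force the \emph{exact} dilation $G(X^\star_\epsilon+z)=F^\star(X+z/\sqrt{\lambda_0})$. Substituting $z=\sqrt{\lambda_0}\,u$ in $\iint WGG$ and using $\log\frac{\diameter(\domain)}{\sqrt{\lambda_0}|u-v|}=\log\frac{\diameter(\domain)}{|u-v|}-\tfrac12\log\lambda_0$ on the (tiny) supports, together with $\int F^\star=m/\depth(X)$, one obtains $\tfrac{\depth(X^\star_\epsilon)^2}{m^2}\iint WGG=\tfrac{\depth(X)^2}{m^2}\iint WF^\star F^\star-\log\sqrt{\lambda_0}$. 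Chaining the three facts, the common quantity $\tfrac{\depth(X)^2}{m^2}\iint WF^\star F^\star$ --- which has size $\log\tfrac1\epsilon$ --- cancels, and one is left with $A\le B+\log\sqrt{\depth(X)/\depth(X^\star_\epsilon)}+C(\omega_1+\omega_2)$, which is the claim; here $C$ depends only on $\sup_\domain\depth$ and on the uniform lower bounds for $\depth$ near $X$ and on $B(X^\star_\epsilon,R)$, and one first checks, using $\vortex_\epsilon\in L^p(\domain,\mu)$, that these interaction integrals are finite so that the cancellation is meaningful. The \textbf{main obstacle} is the bi‑Lipschitz estimate for $\Phi$: the fact that monotone radial straightening of a nearly constant weight is a near‑\emph{isometry}, not merely nearly radius‑preserving. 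A minor technicality is that when $X^\star_\epsilon$ is close to $\boundary\domain$ the superlevel sets of $g$ need not be round discs, which is handled by the interior cone condition (or a local reflection).
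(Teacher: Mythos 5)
Your overall strategy is the same as the paper's: reduce both interaction integrals to constant-density (Lebesgue) integrals near the concentration points, apply the classical Riesz--Sobolev inequality there, and read off the term $\log\sqrt{\depth(X)/\depth(X^\star_\epsilon)}$ from the dilation by $r=\sqrt{\depth(X)/\depth(X^\star_\epsilon)}$ that matches the two normalizations (the paper uses exactly this dilation, via the map $\phi(x)=X^\star_\epsilon+r(x-X)$ applied to the Lebesgue rearrangement of $\positivepart{\vortex_\epsilon}$). However, your specific mechanism contains a false step. The identity $\Phi_*(\depth\dif x)=\depth(X)\dif x$ cannot hold for a \emph{radial} map $\Phi$ unless $\depth$ is radially symmetric about $X$: for your $\Phi$ one has $\det D\Phi(x)=s'(r)s(r)/r=\overline{\depth}(r)/\depth(X)$ with $\overline{\depth}(r)$ the spherical average of $\depth$ on $\partial B(X,r)$, so the pushforward of $\depth\dif x$ has density $\depth(X)\,\depth(\Phi^{-1}(u))/\overline{\depth}(|\Phi^{-1}(u)-X|)$, which agrees with $\depth(X)$ only on balls centered at $X$. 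Consequently either the formula $A=\tfrac{\depth(X)^2}{m^2}\iint WFF+O(\omega_1)$ or the level-set identity $\lebesguemeasure(\{F\ge\lambda\})=\mu(\{f\ge\lambda\})/\depth(X)$ (and hence the ``exact dilation'' relation between $F^\star$ and $G$) acquires a \emph{multiplicative} error $1+O(\omega_1)$ --- and a multiplicative error on the measures multiplies the full double integral, which is of size $\log\tfrac1\epsilon$ after normalization. You therefore end up with an error term $O\big(\omega_{\depth}(\theta)\log\tfrac1\epsilon\big)$, not the $O\big(\omega_{\depth}(\theta)\big)$ asserted in the statement. This is precisely the failure mode you flag for the kernel distortion, but it re-enters through the density of the change of variables, which the bi-Lipschitz estimate does not control.

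The missing ingredient is the normalization of the kernel at the concentration scale, which is how the paper keeps the error at $O(\omega_{\depth})$: write
\[
\log\frac{\diameter(\domain)}{|x-y|}=\log\frac{\diameter(\domain)}{\theta^\star}+\log\frac{\theta^\star}{|x-y|},\qquad \theta^\star=\diameter\big(\{\symmetrizearoundpoint{X^\star_\epsilon}{\positivepart{\vortex_\epsilon}}>0\}\big)\sim\epsilon .
\]
The constant part contributes identically to both sides because all the competitors are $\mu$-rearrangements with the same total mass, so it cancels exactly; and the remaining kernel, dominated by $\positivepart{\log(\theta^\star/|x-y|)}$, integrates against the normalized vortex measures to an $O(1)$ quantity (by the $L^p$ bound~\eqref{distribConstraint}, as in the proof of \cref{upperBoundFirstOrder} with $\epsilon$ in place of $\diameter(\domain)$). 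Only after this reduction does an $L^1$-comparison of the densities --- the paper bounds $\int|\depth(X^\star_\epsilon)\xi-\depth\,\vortex^\star|$ by $C\big(\omega_\depth(\theta)+\omega_\depth(\theta^\star)\big)\tau\vortexstrength_\epsilon$ and pairs it with the scale-$\theta^\star$ kernel --- cost only $O(\omega_\depth)$ rather than $O(\omega_\depth\log\tfrac1\epsilon)$. With this splitting inserted, your argument can be repaired (and your affine rescaling could replace the radial straightening altogether, as in the paper); without it, the claimed error bound does not follow.
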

\begin{proof}
Let us fix $\epsilon>0$, and write for short $X^\star=X^\star_\epsilon$, $\vortex=\positivepart{\vortex_\epsilon}$
and $\vortex^\star=\symmetrizearoundpoint{X^\star}{\positivepart{\vortex_\epsilon}}$,
$\theta=\diameter\big(\{\vortex>0\}\big)$ and $\theta^\star=\diameter\big(\{\vortex^\star>0\}\big)$.
Assuming $\epsilon>0$ sufficiently small, we may assume that $\diameter(\{\vortex>0\})\leq R$ and
	\[ \inf_{\{\vortex>0\}}\depth \geq \delta\,\sup_\domain\depth ,
		\qquad \inf_{\{\vortex^\star>0\}}\depth\geq \delta^\star\,\sup_\domain\depth ,\]
for $\delta,\delta^\star>0$. Let $X\in\{\positivepart{\vortex_\epsilon}>0\}$.
Let us write for short $r=\displaystyle\sqrt{\frac{\depth(X)}{\depth(X^\star)}}$ and
	\[ \phi : \plane\to\plane : \phi(x) = X^\star + r(x-X) .\]
Finally define the following auxiliary functions: $\vortex^\Delta$ is the \emph{Lebesgue} nonincreasing Lebesgue re\-ar\-ran\-ge\-ment
of $\vortex$ around the point $X$, and $\xi=\vortex^\Delta\circ\phi^{-1}$.
From the definition it follows that every super level set of $\xi$ is a ball centered on $X^\star$,
with
	\[ \lebesguemeasure\big( \{\xi\geq t\} \big) = r^2\lebesguemeasure\big( \{\vortex\geq t\} \big) .\]
From this estimate we have
	\[ \Big| \depth(X^\star)\lebesguemeasure\big(\{\xi\geq t\}\big) - \mu\big(\{\vortex\geq t\} \big) \Big|
		\leq \frac{\omega_\depth(\theta)}{\delta\sup_\domain\depth}\mu\big(\{\vortex\geq t\}\big)  .\]
Similarly, one may also compute
	\[ \Big| \depth(X^\star)\lebesguemeasure\big(\{\vortex^\star\geq t\}\big) - \mu\big(\{\vortex\geq t\}\big) \Big|
		\leq \frac{\omega_\depth(\theta^\star)}{\delta^\star\sup_\domain\depth}\mu\big(\{\vortex\geq t\}\big) .\]
According to the standard Riesz-Sobolev rearrangement inequality~\citelist{\cite{LiebLoss}\cite{CroweZweibelRosenbloom}}
and using the change of variable formula, we have
	\begin{multline*}
		\iint\limits_{\plane\times\plane}\log\frac{r^{-1}\theta^\star}{|x-y|}\vortex(x)\vortex(y)\dif (x,y)
			\leq \iint\limits_{\plane\times\plane}\positivepart{\log\frac{r^{-1}\theta^\star}{|x-y|}}\vortex^\Delta(x)\vortex^\Delta(y)\dif (x,y) 
		\\ \leq \frac{1}{r^4}\iint\limits_{\plane\times\plane}\positivepart{\log\frac{\theta^\star}{|x-y|}}\xi(x)\xi(y)\dif (x,y) .
	\end{multline*}
This yields to
	\begin{equation}\label{eqnref1}
		\depth(X)^2\iint\limits_{\plane\times\plane}\log\frac{r^{-1}\theta^\star}{|x-y|}\vortex(x)\vortex(y)\dif (x,y)
			\leq \depth(X^\star)^2\iint\limits_{\plane\times\plane}\positivepart{\log\frac{\theta^\star}{|x-y|}}\xi(x)\xi(y)\dif(x,y) .
	\end{equation}
On the other hand, we also have
	\[ \iint\limits_{\plane\times\plane}\positivepart{\log\frac{\theta^\star}{|x-y|}}\vortex^\star(x)\vortex^\star(y)\mumeasure\mumeasure(x,y)
		= \iint\limits_{\plane\times\plane}\log\frac{\theta^\star}{|x-y|}\vortex^\star(x)\vortex^\star(y)\mumeasure\mumeasure(x,y) ,\]
and
	\begin{multline*}
		\depth(X^\star)^2\iint\limits_{\plane\times\plane}\positivepart{\log\frac{\theta^\star}{|x-y|}}\xi(x)\xi(y)\dif(x,y)
			- \iint\limits_{\plane\times\plane}\positivepart{\log\frac{\theta^\star}{|x-y|}}\vortex^\star(x)\vortex^\star(y)\mumeasure\mumeasure(x,y)
	\\= \int_{\plane}\big(\depth(X^\star)\xi(x)-\depth(x)\vortex^\star(x)\big)\int_{\plane}\positivepart{\log\frac{\theta^\star}{|x-y|}}
		\big(\depth(X^\star)\xi(y)+\depth(y)\vortex^\star(y)\big)\dif(x,y) .
	\end{multline*}
Let us estimate
	\begin{multline*}
		\int\limits_{\plane}\big|\depth(X^\star)\xi(x)-\depth(x)\vortex^\star(x)\big|\dif x
			\\\leq \int\limits_{\plane}\depth(X^\star)\big|\vortex^\star(x)-\xi(x)\big|\dif x
				+ \int\limits_{\plane}\big|\depth(X^\star)-\depth(x)\big|\,\vortex^\star(x)\dif x
			\\\leq \depth(X^\star)\int_0^{+\infty}\lebesguemeasure\big( \{\vortex^\star\geq t\}\Delta\{\xi\geq t\}\big) \dif t
				+ \omega_{\depth}(\theta^\star)\int\limits_{\plane}\vortex^\star(x)\dif x .
	\end{multline*}
For the second term, we have
	\[ \int\limits_{\plane}\vortex^\star(x)\dif x
		\leq \frac{1}{\delta^\star\sup_\domain\depth}\,\int\limits_{\plane}\vortex^\star(x)\mumeasure(x) = \frac{\tau\vortexstrength_\epsilon}{\delta^\star\sup_\domain\depth} .\]
For the first term, we have
	\begin{align*}
		\depth(X^\star)\lebesguemeasure\big( \{\vortex^\star\geq t\}\Delta\{\xi\geq t\}\big)
			&= \Big| \depth(X^\star)\lebesguemeasure\big( \{\vortex^\star\geq t\} \big) - \depth(X^\star) \lebesguemeasure\big( \{\xi\geq t\}\big) \Big|
			\\&\leq \bigg( \frac{\omega_\depth(\theta^\star)}{\delta^\star\sup_\domain\depth}  +\frac{\omega_\depth(\theta)}{\delta\sup_\domain\depth}
			\bigg)\ \mu\big(\{\vortex\geq t\}\big)
	\end{align*}
Integrating over $t\in [0,+\infty)$ yields
	\[ \depth(X^\star)\int_0^{+\infty}\lebesguemeasure\big( \{\vortex^\star\geq t\}\Delta\{\xi\geq t\}\big) \dif t
		\leq \bigg( \frac{\omega_\depth(\theta^\star)}{\delta^\star\sup_\domain\depth}  +\frac{\omega_\depth(\theta)}{\delta\sup_\domain\depth}
			\bigg)\tau \vortexstrength_\epsilon .\]
Therefore we have
	\[ \int\limits_{\plane}\big|\depth(X^\star)\xi(x)-\depth(x)\vortex^\star(x)\big|\dif x
		\leq \bigg( \frac{\omega_\depth(\theta^\star)}{\delta^\star\sup_\domain\depth}  +\frac{\omega_\depth(\theta)}{\delta\sup_\domain\depth}
			\bigg)\tau \vortexstrength_\epsilon .\]
Using the fact that $\theta^\star$ is of order $\epsilon$,
we obtain some constant $\ncste(\delta,\delta^\star,\depth)>0$ such that
	\begin{multline*}
		\depth(X^\star)^2\iint\limits_{\plane\times\plane}\positivepart{\log\frac{\theta^\star}{|x-y|}}\xi(x)\xi(y)\dif(x,y)
			- \iint\limits_{\plane\times\plane}\log\frac{\theta^\star}{|x-y|}\vortex^\star(x)\vortex^\star(y)\mumeasure\mumeasure(x,y)
		\\ \leq \cste\Big( \omega_{\depth}(\theta^\star) + \omega_{\depth}(\theta) \Big)\vortexstrength_\epsilon^2 .
	\end{multline*}
Injecting the previous estimate in equation~\eqref{eqnref1} yields to
	\begin{multline*}
		\depth(X)^2\iint\limits_{\plane\times\plane}\positivepart{\log\frac{r^{-1}\theta^\star}{|x-y|}}\vortex(x)\vortex(y)\dif(x,y)
			- \iint\limits_{\plane\times\plane}\log\frac{\theta^\star}{|x-y|}\vortex^\star(x)\vortex^\star(y)\mumeasure\mumeasure(x,y)
		\\ \leq \cste\Big( \omega_{\depth}(\theta^\star) + \omega_{\depth}(\theta) \Big)\vortexstrength_\epsilon^2 .
	\end{multline*}
Since $\depth$ is Hölder continuous and $\vortex_\epsilon$ concentrates on points of maximal depth,
this may be rewritten as
	\begin{multline*}
		\iint\limits_{\plane\times\plane}\log\frac{r^{-1}\theta^\star}{|x-y|}\vortex(x)\vortex(y)\mumeasure\mumeasure(x,y)
			- \iint\limits_{\plane\times\plane}\log\frac{\theta^\star}{|x-y|}\vortex^\star(x)\vortex^\star(y)\mumeasure\mumeasure(x,y)
		\\ \leq \ncste\Big( \omega_{\depth}(\theta^\star) + \omega_{\depth}(\theta) \Big)\vortexstrength_\epsilon^2 ,
	\end{multline*}
and therefore
	\begin{multline*}
		\iint\limits_{\plane\times\plane}\log\frac{\diameter(\domain)}{|x-y|}\vortex(x)\vortex(y)\mumeasure\mumeasure(x,y)
			\leq \iint\limits_{\plane\times\plane}\log\frac{\domain(\domain)}{|x-y|}\vortex^\star(x)\vortex^\star(y)\mumeasure\mumeasure(x,y)
		\\ +\tau^2\vortexstrength_\epsilon^2\log\sqrt{\frac{\depth(X)}{\depth(X^\star)}}+ \cste\Big( \omega_{\depth}(\theta^\star) + \omega_{\depth}(\theta) \Big)\vortexstrength_\epsilon^2 .
		\qedhere
	\end{multline*}
\end{proof}

\subsection{A priori estimate for repulsion}

The aim of this section is to prove the following a priori estimates for the repulsion of the pair.
\begin{proposition}\label{repulsionLog}
Let $X\in\closure{\domain}$ be a maximizer of $\depth$,
and assume that $\domain$ satisfies an interior cone condition at $X$.
There exists constants $C_1,C_2,C_3>0$ and exponents $\gamma_1,\gamma_2,\gamma_3>0$
such that, for all sufficiently small $\epsilon>0$, we have
	\begin{align*}
		\distance\big(\{\positivepart{\vortex_\epsilon}>0\},\boundary\domain\big)
			&\geq C_1\bigg(\frac{1}{\log\frac{1}{\epsilon}}\bigg)^{\gamma_1},
	\\	\distance\big(\{\negativepart{\vortex_\epsilon}>0\},\boundary\domain\big)
			&\geq C_2\bigg(\frac{1}{\log\frac{1}{\epsilon}}\bigg)^{\gamma_2},
	\\	\distance\big(\{\positivepart{\vortex_\epsilon}>0\},\{\negativepart{\vortex_\epsilon}>0\}\big)
			&\geq C_3\bigg(\frac{1}{\log\frac{1}{\epsilon}}\bigg)^{\gamma_3}.
	\end{align*}
\end{proposition}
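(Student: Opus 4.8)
The approach is a direct energy comparison. Suppose, for contradiction, that along some sequence $\epsilon\to 0$ one of the three distances in the statement — call it $d_\epsilon$ — satisfies $d_\epsilon(\log\tfrac1\epsilon)^\gamma\to 0$ for every $\gamma>0$. By \cref{strongConcentration} we may pick $X_\epsilon\in\{\positivepart{\vortex_\epsilon}>0\}$ and $Y_\epsilon\in\{\negativepart{\vortex_\epsilon}>0\}$ with $\{\positivepart{\vortex_\epsilon}>0\}\subseteq B(X_\epsilon,\epsilon^\varsigma)$ and $\{\negativepart{\vortex_\epsilon}>0\}\subseteq B(Y_\epsilon,\epsilon^\varsigma)$; by \cref{weakConcentration}, $\depth(X_\epsilon),\depth(Y_\epsilon)\to\sup_\domain\depth$. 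Fix a Hölder exponent $\alpha$ of $\depth$ and set $\rho_\epsilon:=(\log\tfrac1\epsilon)^{-1/\alpha}$. Using the interior cone condition at the maximiser $X$, choose $X'_\epsilon,Y'_\epsilon$ in a fixed cone $\mathcal K\subseteq\domain$ with apex $X$, both at distance $\sim\rho_\epsilon$ from $X$ and with $|X'_\epsilon-Y'_\epsilon|\gtrsim\rho_\epsilon$; then $\distance(X'_\epsilon,\boundary\domain),\distance(Y'_\epsilon,\boundary\domain)\gtrsim\rho_\epsilon$ and $\depth(X'_\epsilon),\depth(Y'_\epsilon)\geq\sup_\domain\depth-C\rho_\epsilon^\alpha$. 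Since $\{\symmetrizearoundpoint{X'_\epsilon}{\positivepart{\vortex_\epsilon}}>0\}$ and $\{\symmetrizearoundpoint{Y'_\epsilon}{\negativepart{\vortex_\epsilon}}>0\}$ are balls of $\mu$-measure at most $\epsilon^2$, hence of Euclidean radius $\lesssim\epsilon\ll\rho_\epsilon$, they are disjoint and contained in $\domain$, so
\[ \tilde\vortex_\epsilon := \symmetrizearoundpoint{X'_\epsilon}{\positivepart{\vortex_\epsilon}} - \symmetrizearoundpoint{Y'_\epsilon}{\negativepart{\vortex_\epsilon}} \]
lies in $\rearrangement(\vortex_\epsilon)$ by \cref{symmetrizearoundpoint}, and maximality gives $\energy_\epsilon(\vortex_\epsilon)\geq\energy_\epsilon(\tilde\vortex_\epsilon)$; we will contradict this by producing the reverse inequality.

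The key preliminary is a rewriting of the energy that exposes the boundary. Inserting $\regulargreenlaplace(x,y)=\tfrac1{2\pi}\log\tfrac{\diameter(\domain)}{|x-y|}-\greenlaplace(x,y)$ into the expansion \eqref{integralRepresentation}, the energy kernel $\tfrac{\depth(x)}{2\pi}\log\tfrac{\diameter(\domain)}{|x-y|}+F(x,y)$ collapses to $\depth(x)\greenlaplace(x,y)+\mathcal R_0(x,y)$, where $\mathcal R_0$ gathers the $\rectifykernel$ and the harmonic-flow contributions and is uniformly bounded on $\domain\times\domain$. Hence, for every $\vortex\in\rearrangement(\vortex_\epsilon)$,
\[ \energy_\epsilon(\vortex) = \tfrac12\iint_{\domain\times\domain}\depth(x)\greenlaplace(x,y)\,\vortex(x)\vortex(y)\,\mumeasure\mumeasure(x,y) + \mathcal O(\vortexstrength_\epsilon^2) , \]
uniformly, so that, writing $\vortex=\positivepart{\vortex}-\negativepart{\vortex}$ and symmetrising the factor $\depth(x)$, one gets $\energy_\epsilon(\vortex)=A^+(\vortex)+A^-(\vortex)-\mathcal C(\vortex)+\mathcal O(\vortexstrength_\epsilon^2)$, with $A^\pm$ the (nonnegative) $\greenlaplace$-self-energies of the two parts and $\mathcal C\geq0$ their $\greenlaplace$-cross-energy. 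Now $\greenlaplace\geq0$, $\greenlaplace(x,y)=\tfrac1{2\pi}\log\tfrac1{|x-y|}+h(x,y)$ with $h\leq C$ everywhere, and the classical two-sided estimate $\greenlaplace(x,y)\asymp\log\bigl(1+\tfrac{\distance(x,\boundary\domain)\distance(y,\boundary\domain)}{|x-y|^2}\bigr)$ yields $h(x,y)\leq\tfrac1{2\pi}\log\bigl(C\max(\distance(x,\boundary\domain),\distance(y,\boundary\domain))\bigr)$ on a set of diameter $\lesssim$ its distance to $\boundary\domain$, and a matching lower bound $\greenlaplace(x,y)\gtrsim\log\tfrac{\sqrt{\distance(x,\boundary\domain)\distance(y,\boundary\domain)}}{|x-y|}$. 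These are the two repulsion mechanisms: $h$ drags a part's self-energy down when it sits near $\boundary\domain$, and $\greenlaplace$ makes $\mathcal C$ large when the two parts are close.

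We estimate both sides. For $\tilde\vortex_\epsilon$, each part is a $\mu$-ball at depth $\geq\sup_\domain\depth-C\rho_\epsilon^\alpha$ and at distance $\gtrsim\rho_\epsilon$ from $\boundary\domain$, so the lower bound on $\greenlaplace$ and the $L^p$-control \eqref{distribConstraint} on its shape give $A^\pm(\tilde\vortex_\epsilon)\geq E_0^\pm(\epsilon)-C\vortexstrength_\epsilon^2\log\tfrac1{\rho_\epsilon}$ and $\mathcal C(\tilde\vortex_\epsilon)\leq C\vortexstrength_\epsilon^2\log\tfrac1{\rho_\epsilon}$, where $E_0^+(\epsilon),E_0^-(\epsilon)$ are the leading terms of order $\vortexstrength_\epsilon^2\log\tfrac1\epsilon$ (the depth defect costs only $C\rho_\epsilon^\alpha\vortexstrength_\epsilon^2\log\tfrac1\epsilon=C\vortexstrength_\epsilon^2$ by the choice of $\rho_\epsilon$), and $\log\tfrac1{\rho_\epsilon}=\tfrac1\alpha\log\log\tfrac1\epsilon$. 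For $\vortex_\epsilon$, the asymptotic Riesz–Sobolev inequality \cref{RieszSobolev}, applied with a point $X^\star_\epsilon$ of maximal depth near which $\positivepart{\vortex_\epsilon}$ concentrates, bounds the $\log\tfrac1{|x-y|}$-part of $A^+(\vortex_\epsilon)$ by that of a $\mu$-ball up to $o(\vortexstrength_\epsilon^2)$, whence $A^+(\vortex_\epsilon)\leq E_0^+(\epsilon)+C\vortexstrength_\epsilon^2$, and likewise for $A^-$. One first treats the two boundary distances: ruling out $d_\epsilon\lesssim\epsilon^\varsigma$ directly (there the singular self-energy of the corresponding part is abnormally small, losing a definite fraction of $E_0^\pm(\epsilon)$), one may then assume $d_\epsilon\gtrsim\epsilon^\varsigma$, in which case the bound on $h$ gives $A^\pm(\vortex_\epsilon)\leq E_0^\pm(\epsilon)-c\,\vortexstrength_\epsilon^2\log\tfrac1{d_\epsilon}+C\vortexstrength_\epsilon^2$. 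For the inter-part distance one uses instead the lower bound on $\greenlaplace$ together with the already-established boundary estimates, obtaining $\mathcal C(\vortex_\epsilon)\geq c\,\vortexstrength_\epsilon^2\log\tfrac1{d_\epsilon}-C\vortexstrength_\epsilon^2\log\log\tfrac1\epsilon$. In every case the terms $E_0^\pm(\epsilon)$ cancel and
\[ \energy_\epsilon(\tilde\vortex_\epsilon) - \energy_\epsilon(\vortex_\epsilon) \geq c\,\vortexstrength_\epsilon^2\log\tfrac1{d_\epsilon} - C\,\vortexstrength_\epsilon^2\log\log\tfrac1\epsilon - C\vortexstrength_\epsilon^2 , \]
which is $>0$ as soon as $d_\epsilon\leq(\log\tfrac1\epsilon)^{-\gamma}$ for a large enough fixed $\gamma$, contradicting $\energy_\epsilon(\vortex_\epsilon)\geq\energy_\epsilon(\tilde\vortex_\epsilon)$. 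This yields the three lower bounds (one may take all $\gamma_i$ equal to such a $\gamma$, with $C_i$ accordingly).

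The main obstacle is the error accounting above. Because \cref{strongConcentration} localises the vortices only at the polynomial scale $\epsilon^\varsigma$, not the natural scale $\epsilon$, the singular self-energies carry an a priori ambiguity of size $\sim\vortexstrength_\epsilon^2\log\tfrac1\epsilon$, and it is precisely \cref{RieszSobolev} — combined with sharp two-sided estimates for the Dirichlet Green's function near $\boundary\domain$ — that is needed to pin the leading terms $E_0^\pm(\epsilon)$ down on both sides so that they cancel exactly; keeping every remainder strictly below the logarithmic gain $\vortexstrength_\epsilon^2\log\tfrac1{d_\epsilon}$ requires particular care for $\mathcal C(\vortex_\epsilon)$, where the boundary correction of $\greenlaplace$ couples the inter-part estimate to the two boundary estimates (hence the order in which the three are proved). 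One must also arrange the relocated competitor so that it simultaneously lies in $\domain$, keeps its two blobs $\gtrsim\rho_\epsilon$ apart and $\gtrsim\rho_\epsilon$ away from $\boundary\domain$, and loses at most $\sim\rho_\epsilon^\alpha$ in depth — which is exactly where the interior cone condition at $X$ enters, and which forces the balance $\rho_\epsilon=(\log\tfrac1\epsilon)^{-1/\alpha}$ and therefore the $\log\log$ loss, so that only a polylogarithmically small $d_\epsilon$ can be excluded.
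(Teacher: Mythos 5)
Your proposal is correct and follows essentially the same route as the paper: the same symmetrized competitor $\symmetrizearoundpoint{X'_\epsilon}{\positivepart{\vortex_\epsilon}}-\symmetrizearoundpoint{Y'_\epsilon}{\negativepart{\vortex_\epsilon}}$ centered at cone points at scale $(\log\frac{1}{\epsilon})^{-1/\alpha}$, the same use of the asymptotic Riesz--Sobolev inequality (\cref{RieszSobolev}) to make the leading singular self-energies cancel, and the same Green's-function bounds (the two directions of your ``classical two-sided estimate'' that you actually use are exactly \cref{regularestimate}, proved there by elementary exterior-ball comparison), with the boundary estimates established before the inter-vortex one for the same reason. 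The only differences are cosmetic: you argue by contradiction where the paper extracts the three lower bounds directly from the single inequality bounding the nonnegative $\regulargreenlaplace$- and $\greenlaplace$-terms by $C\vortexstrength_\epsilon^2\log\log\frac{1}{\epsilon}$, and your separate treatment of the case $d_\epsilon\lesssim\epsilon^\varsigma$ is absorbed in the paper by the $|x-y|+2\max\{\distance(x,\boundary\mathcal{U}),\distance(y,\boundary\mathcal{U})\}$ form of that lemma.
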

The cone condition we impose may be relaxed by a more general cusp-like condition, but
we think that this improvement does not bring a better understanding of the general behavior.
\begin{lemma}[\cite{TurkingtonFriedman}*{Lemma~2.2}]\label{regularestimate}
Let $\mathcal{U}=\text{\normalfont interior}\big(\closure{\domain}\big)$.
For all $x,y\in\domain$, we have
	\begin{multline*}
		\frac{1}{2\pi}\log\frac{\diameter(\domain)}{\max\big\{|x-y|,\distance(x,\partial\domain),\distance(y,\partial\domain)\}}
		\\\geq \regulargreenlaplace(x,y)
		\geq \frac{1}{2\pi}\log\frac{\diameter(\domain)}{|x-y|+2\max\big\{
		\distance(x,\boundary\mathcal{U}),
		\distance(y,\boundary\mathcal{U}) \big\}} .
	\end{multline*}
\end{lemma}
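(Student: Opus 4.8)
The plan is to reduce both inequalities to standard properties of the Dirichlet Green's function $\greenlaplace$ of $\laplacian$ on $\domain$, through the defining relation $\regulargreenlaplace(x,y) = \frac{1}{2\pi}\log\frac{\diameter(\domain)}{|x-y|} - \greenlaplace(x,y)$. Write $D = \diameter(\domain)$. Two facts are needed: $\greenlaplace$ is nonnegative and symmetric; and, since the logarithmic poles of $\frac{1}{2\pi}\log\frac{D}{|\cdot-y|}$ and of $\greenlaplace(\cdot,y)$ cancel, the function $x\mapsto\regulargreenlaplace(x,y)$ is harmonic on all of $\domain$ and extends continuously to $\closure{\domain}$ with boundary trace $z\mapsto\frac{1}{2\pi}\log\frac{D}{|z-y|}$, which is nonnegative because $|z-y|\leq D$ for $z\in\closure{\domain}$.

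For the upper bound I would apply the maximum principle to the harmonic function $x\mapsto\regulargreenlaplace(x,y)$: it is dominated on $\domain$ by the supremum of its boundary trace, namely $\frac{1}{2\pi}\log\frac{D}{\distance(y,\boundary\domain)}$, since $\inf_{z\in\boundary\domain}|z-y| = \distance(y,\boundary\domain)$. The symmetry of $\regulargreenlaplace$ yields the same estimate with $\distance(x,\boundary\domain)$, and nonnegativity of $\greenlaplace$ gives immediately $\regulargreenlaplace(x,y)\leq\frac{1}{2\pi}\log\frac{D}{|x-y|}$. Keeping the largest of the three denominators is the first inequality; this half uses nothing beyond the maximum principle.

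For the lower bound I would construct a harmonic subsolution. Set $m = \max\{\distance(x,\boundary\mathcal{U}),\distance(y,\boundary\mathcal{U})\}$; by symmetry we may assume $m = \distance(y,\boundary\mathcal{U})$, and fix $q\in\boundary\mathcal{U}$ with $|y-q| = m$. Let $L$ be the line through $q$ orthogonal to $y-q$, let $\mathbb{H}$ be the open half-plane bounded by $L$ that contains $y$, and let $y^{\ast}$ denote the reflection of $y$ across $L$, so that $|y-y^{\ast}| = 2m$. In the favorable case in which $L$ is a supporting line of $\mathcal{U}$ at $q$ --- equivalently $\domain\subseteq\mathcal{U}\subseteq\mathbb{H}$, which is automatic if $\mathcal{U}$ is convex --- the function $v(z) = \frac{1}{2\pi}\log\frac{D}{|z-y^{\ast}|}$ is harmonic on $\domain$ (its pole $y^{\ast}$ lies outside $\mathbb{H}\supseteq\domain$), and it satisfies $v\leq\regulargreenlaplace(\cdot,y)$ on $\boundary\domain$ because $z\in\closure{\mathbb{H}}$ forces $|z-y^{\ast}|\geq|z-y|$. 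The maximum principle then gives $\regulargreenlaplace(x,y)\geq v(x) = \frac{1}{2\pi}\log\frac{D}{|x-y^{\ast}|}\geq\frac{1}{2\pi}\log\frac{D}{|x-y|+2m}$ by the triangle inequality $|x-y^{\ast}|\leq|x-y|+|y-y^{\ast}|$. This argument also makes transparent why the outer boundary $\boundary\mathcal{U}$, rather than $\boundary\domain$, enters: reflecting across a tangent to an inner component $\curve_i$ would not keep $\curve_0$ (nor the other islands) on one side of $L$, so the construction has to be anchored at $\boundary\mathcal{U}$.

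The step I expect to be the real obstacle is the general case, where $L$ is not a supporting line and $\domain$ protrudes from $\mathbb{H}$: then $v$ may fall below $\regulargreenlaplace(\cdot,y)$ on the part of $\boundary\domain$ outside $\closure{\mathbb{H}}$, and this defect must be absorbed. Heuristically it is harmless --- near $q$ the $C^{1}$ regularity of $\boundary\domain$ forces the boundary to hug $L$ to first order, while far from $q$ the quantities $|z-y|$ and $|z-y^{\ast}|$ are large and comparable --- but turning this into a clean estimate is exactly the technical core of Turkington and Friedman's argument, and I expect the bookkeeping there, rather than any conceptual difficulty, to be the main labor; the remainder of the proof, as sketched, is routine.
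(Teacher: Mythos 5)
Your upper bound is fine and is exactly the paper's argument (positivity of $\greenlaplace$ plus the maximum principle applied to the harmonic function $\regulargreenlaplace(\cdot,y)$, whose boundary trace is $\tfrac{1}{2\pi}\log\tfrac{\diameter(\domain)}{|z-y|}$). The lower bound, however, is where you stop short, and the step you defer is not mere bookkeeping: reflecting $y$ across a line through the nearest point of $\boundary\mathcal{U}$ only produces a valid comparison function when that line supports $\closure{\domain}$, and for a non-convex outer boundary (let alone one with islands) no such supporting line exists at the nearest point in general. Moreover, the patch you gesture at --- absorbing the defect on $\boundary\domain\setminus\closure{\mathbb{H}}$ using $C^1$ regularity near $q$ --- could at best yield the inequality up to additive error terms, whereas the lemma is an exact pointwise inequality with no constants; so that route does not close.

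The missing idea is to replace reflection in a line by reflection in a circle. For \emph{any} ball $B(z,r)\subseteq\plane\setminus\closure{\domain}$, the explicit Green's function of the exterior of the disk,
\[
\tilde{\greenlaplace}(w)=\frac{1}{2\pi}\log\frac{\bigl|\frac{|y-z|}{r}(w-z)-\frac{r(y-z)}{|y-z|}\bigr|}{|w-y|},
\]
is harmonic on $\domain$ away from $y$ (its second pole is the inverse point $z+\tfrac{r^2(y-z)}{|y-z|^2}\in B(z,r)$, hence outside $\closure{\domain}$), vanishes on $\boundary B(z,r)$, and is nonnegative on $\boundary\domain$; the maximum principle gives $\greenlaplace(\cdot,y)\leq\tilde{\greenlaplace}$ on $\domain$, and the triangle inequality then yields $\regulargreenlaplace(x,y)\geq\tfrac{1}{2\pi}\log\tfrac{\diameter(\domain)}{|x-y|+2(|y-z|-r)}$. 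No supporting-hyperplane condition is needed, because the only geometric requirement --- that the disk be disjoint from $\closure{\domain}$ --- is satisfied by small disks around any exterior point. Taking the infimum over all such balls turns $|y-z|-r$ into $\distance(y,\boundary\mathcal{U})$ and gives the stated bound; your half-plane construction is the degenerate $r\to\infty$ case of this, which is exactly why it only works for convex $\mathcal{U}$.
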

\begin{proof}
The upper bound for $\regulargreenlaplace$ follows from the fact
that the Green's function $\greenlaplace$ is positive,
and from the weak maximum principle for $\regulargreenlaplace$.
Since $\regulargreenlaplace$ is a symmetric function (because so is
the Green's function $\greenlaplace$), we may assume without loss
of generality that
	\[  \distance\big( x , \boundary\mathcal{U} \big)
		\leq \distance(y , \boundary\mathcal{U} \big) .\]
Let $B(z,r)$ be a ball in $\plane\setminus\closure{\domain}$,
and consider the function
	\[ \tilde{\greenlaplace} : \plane\setminus\bigg\{y, z+\frac{r^2(y-z)}{|y-z|^2} \bigg\}\to\reals : \tilde{\greenlaplace}(w)
		= \frac{1}{2\pi}\log\frac{\Big|\frac{|y-z|}{r}(w-z) - \frac{r(y-z)}{|y-z|}\Big|}{|w-y|} .\]
This function is harmonic on $\plane\setminus\big\{y, z+\frac{r^2(y-z)}{|y-z|^2} \big\}$. It is also null on $\boundary B(z,r)$,
and it is nonnegative on $\boundary\domain$.
To see this, one may compute, for all $w\in\plane\setminus\big\{y, z+\frac{r^2(y-z)}{|y-z|^2} \big\}$:
	\[ 4\pi\tilde{\greenlaplace}(w)
		= \log\frac{\frac{|y-z|^2}{r^2}|w-z|^2 + r^2 - 2(w-z)\cdot(y-z)}{|w-z|^2 + |y-z|^2 - 2(w-z)\cdot(y-z)} ,\]
which vanishes if $|w-z|=r$; and on $\boundary\domain$ we have  $|w-z|\geq r$ and thus
$\tilde{g}(w)\geq 0$. It follows from the weak maximum principle that for all $w\in\domain$, we have
$\greenlaplace(w,y)\leq\tilde{\greenlaplace}(w)$. In particular we obtain
	\begin{align*}
		\regulargreenlaplace(x,y)
			&\geq \frac{1}{2\pi}\log\frac{\diameter(\domain)}{\Big|\frac{|y-z|}{r}(x-z) - \frac{r(y-z)}{|y-z|}\Big|}
			\\&= \frac{1}{2\pi}\log\frac{\diameter(\domain)\frac{|y-z|}{r}}{\Big|(x-z) - \frac{r^2(y-z)}{|y-z|^2}\Big|}
			\\&= \frac{1}{2\pi}\log\frac{\diameter(\domain)\frac{|y-z|}{r}}{\Big|(x-y) + \frac{|y-z|^2-r^2}{|y-z|^2}(y-z)\Big|}
			\\&\geq \frac{1}{2\pi}\log\frac{\diameter(\domain)\frac{|y-z|}{r}}{|x-y| + \frac{|y-z|+r}{|y-z|}\,\big(|y-z|-r\big)} ,
	\end{align*}
where in the last line we have used the triangular inequality. Now using the fact that $y\notin B(z,r)$, we have
	\[ \regulargreenlaplace(x,y)
		\geq \frac{1}{2\pi}\log\frac{\diameter(\domain)}{|x-y| + 2\big(|y-z|-r\big)} .\]
This inequality is true for all ball $B(z,r)\subseteq\plane\setminus\closure{\domain}$,
so that
	\[ \regulargreenlaplace(x,y)
		\geq \frac{1}{2\pi}\log\frac{\diameter(\domain)}{|x-y| + 2\,\distance\big(y,\boundary\mathcal{U}\big)} .\]
As mentioned above, the conclusion follows by symmetry.
\end{proof}
\begin{proof}[Proof of \cref{repulsionLog}]
For all $\epsilon>0$, let $X_\epsilon\in\{\positivepart{\vortex_\epsilon}>0\}$
and $Y_\epsilon\in\{\negativepart{\vortex_\epsilon}>0\}$.
Since $\depth$ is assumed to be Hölder continuous on $\closure{\domain}$,
let $\alpha>0$ be such that $\depth\in C^{0,\alpha}(\closure{\domain})$.
Let us also define $\mathcal{U}=\text{interior}(\closure{\domain})$,
and consider two families of
points $\{X_\epsilon^\star:\epsilon>0\}$ and $\{Y_\epsilon^\star:\epsilon>0\}$
such that there exists a constant $C>0$ such that for all sufficiently
small $\epsilon>0$, we have
	\[ \frac{1}{C\big(\log\frac{1}{\epsilon}\big)^{\frac{1}{\alpha}}}
		\leq \distance(X_\epsilon^\star,\boundary\mathcal{U})
		\leq \frac{C}{\big(\log\frac{1}{\epsilon}\big)^{\frac{1}{\alpha}}} ,\]
and the same estimates hold for both
$\distance(Y_\epsilon^\star,\boundary\mathcal{U})$
and $\distance(X_\epsilon^\star,Y_\epsilon^\star)$.
Such families exist, because there exists a point $X$ in $\closure{\domain}$
that maximize $\depth$ and $\closure{\domain}$ satisfies an interior cone condition at $X$.
For all sufficiently small $\epsilon>0$, the function
	\[ \tilde{\vortex}_\epsilon
	= \symmetrizearoundpoint{X_\epsilon}{\positivepart{\vortex_\epsilon}}
	-\symmetrizearoundpoint{Y_\epsilon}{\negativepart{\vortex_\epsilon}}
	\]
is a $\mu$-rearrangement of $\vortex_\epsilon$, and in particular we have
$\energy_\epsilon(\tilde{\vortex}_\epsilon)\leq\energy_\epsilon(\vortex_\epsilon)$.
We are now going to estimate the above energies.
Using the integral kernel representation\eqref{integralRepresentation}, page~\pageref{integralRepresentation},
and relying on the boundedness of
$\rectifykernel$ and the boundary flows $\staticflow_0,\dots,\staticflow_\nbislands$,
we first expand
	\begin{multline*}
		\energy_\epsilon(\vortex_\epsilon) \leq
		\int_\domain\positivepart{\vortex_\epsilon}
			\positivefirstorderflow{\vortex_\epsilon}\mumeasure
		+ \int_\domain\negativepart{\vortex_\epsilon}
			\negativefirstorderflow{\vortex_\epsilon}\mumeasure
	\\ - \iint\limits_{\domain\times\domain}
		\positivepart{\vortex_\epsilon}(x)
		\positivepart{\vortex_\epsilon}(y)
		\depth(x)\regulargreenlaplace(x,y)\mumeasure\mumeasure(x,y)
	\\ - \iint\limits_{\domain\times\domain}
		\negativepart{\vortex_\epsilon}(x)
		\negativepart{\vortex_\epsilon}(y)
		\depth(x)\regulargreenlaplace(x,y)\mumeasure\mumeasure(x,y)
	\\- \iint\limits_{\domain\times\domain}
			\positivepart{\vortex_\epsilon}(x)
			\negativepart{\vortex_\epsilon}(y)
		\big(\depth(x)+\depth(y)\big)\greenlaplace(x,y)\mumeasure\mumeasure(x,y)
	+ \ncste\vortexstrength_\epsilon^2 ,
	\end{multline*}
Relying on the Riesz-Sobolev rearrangement inequality, \cref{RieszSobolev},
we also have
	\[ \int_\domain\positivepart{\vortex_\epsilon}
			\positivefirstorderflow{\vortex_\epsilon}\mumeasure
	\leq \int_\domain\frac{\sup_\domain\depth}{\depth}\ \positivepart{\tilde{\vortex}_\epsilon}
			\positivefirstorderflow{\tilde{\vortex}_\epsilon}\mumeasure .\]
By construction of the family
$\{X_\epsilon^\star:\epsilon>0\}$, we have for sufficiently small
$\epsilon>0$ and for all $x\in\{\positivepart{\tilde{\vortex}_\epsilon}>0\}$:
	\[ \Big(\sup_\domain\depth\Big) - \depth(x)
		\leq \ncste \Big( \distance(X^\star_\epsilon,\boundary\mathcal{U}) \Big)^\alpha
		\leq \ncste \frac{1}{\log\frac{1}{\epsilon}} .\]
From this we conclude, for all sufficiently small $\epsilon>0$:
	\[ \int_\domain\positivepart{\vortex_\epsilon}
			\positivefirstorderflow{\vortex_\epsilon}\mumeasure
	\leq \int_\domain\positivepart{\tilde{\vortex}_\epsilon}
			\positivefirstorderflow{\tilde{\vortex}_\epsilon}\mumeasure
	+ \ncste\vortexstrength_\epsilon^2 .\]
Similarly, we have
	\[ \int_\domain\negativepart{\vortex_\epsilon}
			\negativefirstorderflow{\vortex_\epsilon}\mumeasure
	\leq \int_\domain\negativepart{\tilde{\vortex}_\epsilon}
			\negativefirstorderflow{\tilde{\vortex}_\epsilon}\mumeasure
	+ \ncste\vortexstrength_\epsilon^2 .\]
This yields to the estimate
	\begin{multline}\label{energyUpperEstimate}
		\energy_\epsilon(\vortex_\epsilon) \leq
		\int_\domain\positivepart{\tilde{\vortex}_\epsilon}
			\positivefirstorderflow{\tilde{\vortex}_\epsilon}\mumeasure
		+ \int_\domain\negativepart{\tilde{\vortex}_\epsilon}
			\negativefirstorderflow{\tilde{\vortex}_\epsilon}\mumeasure
	\\ - \iint\limits_{\domain\times\domain}
		\positivepart{\vortex_\epsilon}(x)
		\positivepart{\vortex_\epsilon}(y)
		\depth(x)\regulargreenlaplace(x,y)\mumeasure\mumeasure(x,y)
	\\ - \iint\limits_{\domain\times\domain}
		\negativepart{\vortex_\epsilon}(x)
		\negativepart{\vortex_\epsilon}(y)
		\depth(x)\regulargreenlaplace(x,y)\mumeasure\mumeasure(x,y)
	\\- \iint\limits_{\domain\times\domain}
			\positivepart{\vortex_\epsilon}(x)
			\negativepart{\vortex_\epsilon}(y)
		\big(\depth(x)+\depth(y)\big)\greenlaplace(x,y)\mumeasure\mumeasure(x,y)
	+ \ncste\vortexstrength_\epsilon^2 .
	\end{multline}
Now we estimate the energy $\energy_\epsilon(\tilde{\vortex}_\epsilon)$
from below. We first expand $\energy_\epsilon(\tilde{\vortex}_\epsilon)$
using the integral kernel representation\eqref{integralRepresentation}, page~\pageref{integralRepresentation},
	\begin{multline*}
		\energy_\epsilon(\tilde{\vortex}_\epsilon) \geq
		\int_\domain\positivepart{\tilde{\vortex}_\epsilon}
			\positivefirstorderflow{\tilde{\vortex}_\epsilon}\mumeasure
		+ \int_\domain\negativepart{\tilde{\vortex}_\epsilon}
			\negativefirstorderflow{\tilde{\vortex}_\epsilon}\mumeasure
	\\ - \iint\limits_{\domain\times\domain}
		\positivepart{\tilde{\vortex}_\epsilon}(x)
		\positivepart{\tilde{\vortex}_\epsilon}(y)
		\depth(x)\regulargreenlaplace(x,y)\mumeasure\mumeasure(x,y)
	\\ - \iint\limits_{\domain\times\domain}
		\negativepart{\tilde{\vortex}_\epsilon}(x)
		\negativepart{\tilde{\vortex}_\epsilon}(y)
		\depth(x)\regulargreenlaplace(x,y)\mumeasure\mumeasure(x,y)
	\\- \iint\limits_{\domain\times\domain}
			\positivepart{\tilde{\vortex}_\epsilon}(x)
			\negativepart{\tilde{\vortex}_\epsilon}(y)
		\big(\depth(x)+\depth(y)\big)\greenlaplace(x,y)\mumeasure\mumeasure(x,y)
	- \ncste\vortexstrength_\epsilon^2 .
	\end{multline*}
Since $\regulargreenlaplace$ is a positive function, we also have
	\begin{multline*}
		\energy_\epsilon(\tilde{\vortex}_\epsilon) \geq
		\int_\domain\positivepart{\tilde{\vortex}_\epsilon}
			\positivefirstorderflow{\tilde{\vortex}_\epsilon}\mumeasure
		+ \int_\domain\negativepart{\tilde{\vortex}_\epsilon}
			\negativefirstorderflow{\tilde{\vortex}_\epsilon}\mumeasure
	\\ - \iint\limits_{\domain\times\domain}
		\positivepart{\tilde{\vortex}_\epsilon}(x)
		\positivepart{\tilde{\vortex}_\epsilon}(y)
		\depth(x)\regulargreenlaplace(x,y)\mumeasure\mumeasure(x,y)
	\\ - \iint\limits_{\domain\times\domain}
		\negativepart{\tilde{\vortex}_\epsilon}(x)
		\negativepart{\tilde{\vortex}_\epsilon}(y)
		\depth(x)\regulargreenlaplace(x,y)\mumeasure\mumeasure(x,y)
	\\- \iint\limits_{\domain\times\domain}
			\positivepart{\tilde{\vortex}_\epsilon}(x)
			\negativepart{\tilde{\vortex}_\epsilon}(y)
		\frac{\depth(x)+\depth(y)}{2\pi}
		\log\frac{\diameter(\domain)}{|x-y|}
		\mumeasure\mumeasure(x,y)
	- \ncste\vortexstrength_\epsilon^2 .
	\end{multline*}
By constructions of $\{X_\epsilon^\star:\epsilon>0\}$
and $\{Y_\epsilon^\star:\epsilon>0\}$ together with
\cref{regularestimate}, we have, for all sufficiently small $\epsilon>0$:
	\[\iint\limits_{\domain\times\domain}
		\positivepart{\tilde{\vortex}_\epsilon}(x)
		\positivepart{\tilde{\vortex}_\epsilon}(y)
		\depth(x)\regulargreenlaplace(x,y)\mumeasure\mumeasure(x,y)
	\leq \frac{\sup_\domain\depth}{2\pi\alpha}\tau^2\vortexstrength_\epsilon^2
		\log\log\frac{1}{\epsilon} + \ncste\vortexstrength_\epsilon^2 ,\]
and similarly
	\[\iint\limits_{\domain\times\domain}
		\negativepart{\tilde{\vortex}_\epsilon}(x)
		\negativepart{\tilde{\vortex}_\epsilon}(y)
		\depth(x)\regulargreenlaplace(x,y)\mumeasure\mumeasure(x,y)
	\leq \frac{\sup_\domain\depth}{2\pi\alpha}(1-\tau)^2\vortexstrength_\epsilon^2
		\log\log\frac{1}{\epsilon} + \ncste\vortexstrength_\epsilon^2 \]
and
	\begin{multline*}
	\iint\limits_{\domain\times\domain}
			\positivepart{\tilde{\vortex}_\epsilon}(x)
			\negativepart{\tilde{\vortex}_\epsilon}(y)
		\frac{\depth(x)+\depth(y)}{2\pi}
		\log\frac{\diameter(\domain)}{|x-y|}
		\mumeasure\mumeasure(x,y)
	\\ \leq \frac{\sup_\domain\depth}{2\pi\alpha}\big(2\tau(1-\tau)\big)
		\vortexstrength_\epsilon^2
		\log\log\frac{1}{\epsilon} + \ncste\vortexstrength_\epsilon^2 .
	\end{multline*}
From these estimates we conclude that for all sufficiently small $\epsilon>0$,
we have
	\begin{equation}\label{energyBelowEstimate}
		\energy_\epsilon(\tilde{\vortex}_\epsilon) \geq
		\int_\domain\positivepart{\tilde{\vortex}_\epsilon}
			\positivefirstorderflow{\tilde{\vortex}_\epsilon}\mumeasure
		+ \int_\domain\negativepart{\tilde{\vortex}_\epsilon}
			\negativefirstorderflow{\tilde{\vortex}_\epsilon}\mumeasure
	- \frac{\sup_\domain\depth}{2\pi\alpha}
		\vortexstrength_\epsilon^2\log\log\frac{1}{\epsilon}
	- \ncste\vortexstrength_\epsilon^2 .
	\end{equation}
Combining estimates~\eqref{energyUpperEstimate} and~\eqref{energyBelowEstimate} yields to
	\begin{align*}
		\iint\limits_{\domain\times\domain}
		\positivepart{\vortex_\epsilon}(x)
	&	\positivepart{\vortex_\epsilon}(y)
		\depth(x)\regulargreenlaplace(x,y)\mumeasure\mumeasure(x,y)
	\\&\quad+\iint\limits_{\domain\times\domain}
		\negativepart{\vortex_\epsilon}(x)
		\negativepart{\vortex_\epsilon}(y)
		\depth(x)\regulargreenlaplace(x,y)\mumeasure\mumeasure(x,y)
	\\&\quad+\iint\limits_{\domain\times\domain}
			\positivepart{\vortex_\epsilon}(x)
			\negativepart{\vortex_\epsilon}(y)
		\big(\depth(x)+\depth(y)\big)\greenlaplace(x,y)\mumeasure\mumeasure(x,y)
	\\&\qquad\qquad \leq \frac{\sup_\domain\depth}{2\pi\alpha}
		\vortexstrength_\epsilon^2\log\log\frac{1}{\epsilon}
		+ \ncste\vortexstrength_\epsilon^2 .
	\end{align*}
Let $\kappa\in(0,1)$. Taking advantage of the positivity of both $\greenlaplace$ and its regular part
$\regulargreenlaplace$, we have for all $\epsilon>0$ sufficiently small, since $\vortex_\epsilon$ concentrates on points
of maximal depth:
	\begin{multline}\label{eqnref2}
		\iint\limits_{\domain\times\domain}
		\positivepart{\vortex_\epsilon}(x)
		\positivepart{\vortex_\epsilon}(y)
		\regulargreenlaplace(x,y)\mumeasure\mumeasure(x,y)
	+\iint\limits_{\domain\times\domain}
		\negativepart{\vortex_\epsilon}(x)
		\negativepart{\vortex_\epsilon}(y)
		\regulargreenlaplace(x,y)\mumeasure\mumeasure(x,y)
	\\+2\iint\limits_{\domain\times\domain}
			\positivepart{\vortex_\epsilon}(x)
			\negativepart{\vortex_\epsilon}(y)
		\greenlaplace(x,y)\mumeasure\mumeasure(x,y)
	\leq \frac{\kappa^{-1}}{2\pi\alpha}
		\vortexstrength_\epsilon^2\log\log\frac{1}{\epsilon}
		+ \ncste\vortexstrength_\epsilon^2 .
	\end{multline}
Estimate~\eqref{eqnref2} gives us a bound for each of the three left terms.
Using \cref{regularestimate} and the positivity of $\greenlaplace$ and its regular part $\regulargreenlaplace$,
we first obtain for all sufficiently small $\epsilon>0$:
	\[ \tau^2\vortexstrength_\epsilon^2
		\log\frac{1}{\distance(X_\epsilon,\boundary\domain)}
	\leq \frac{2-\kappa}{\kappa\alpha}
		\vortexstrength_\epsilon^2\log\log\frac{1}{\epsilon}
	\]
and
	\[ (1-\tau)^2\vortexstrength_\epsilon^2
		\log\frac{1}{\distance(Y_\epsilon,\boundary\domain)}
	\leq \frac{2-\kappa}{\kappa\alpha}
		\vortexstrength_\epsilon^2\log\log\frac{1}{\epsilon}.
	\]
This proves that, for sufficiently small $\epsilon>0$
depending on $\kappa\in(0,1)$, we have:
	\[ \distance(X_\epsilon,\boundary\domain)
		\gtrsim \ncste\Bigg( \frac{1}{\log\frac{1}{\epsilon}} \Bigg)^{\frac{2-\kappa}{\kappa\alpha\tau^2}} ,
	\qquad \distance(Y_\epsilon,\boundary\domain)
		\gtrsim \ncste\Bigg( \frac{1}{\log\frac{1}{\epsilon}} \Bigg)^{\frac{2-\kappa}{\kappa\alpha(1-\tau)^2}}  .\]
The constants here may depend on $\kappa,\tau,\alpha,\depth$ and $\domain$,
but are independent of $\epsilon>0$. Now to conclude the proof, one observes that
we still have, from estimate~\eqref{eqnref2} and the positivity of $\regulargreenlaplace$:
	\[ 2\iint\limits_{\domain\times\domain}
			\positivepart{\vortex_\epsilon}(x)
			\negativepart{\vortex_\epsilon}(y)
		\greenlaplace(x,y)\mumeasure\mumeasure(x,y)
	\leq \frac{\kappa^{-1}}{2\pi\alpha}
		\vortexstrength_\epsilon^2\log\log\frac{1}{\epsilon}
		+ \ncste\vortexstrength_\epsilon^2 ,\]
which may be rewritten as
	\begin{multline*}
	2\iint\limits_{\domain\times\domain}
			\positivepart{\vortex_\epsilon}(x)
			\negativepart{\vortex_\epsilon}(y)
		\log\frac{\diameter(\domain)}{|x-y|}\mumeasure\mumeasure(x,y)
	\\ \leq2\iint\limits_{\domain\times\domain}
			\positivepart{\vortex_\epsilon}(x)
			\negativepart{\vortex_\epsilon}(y)
		\regulargreenlaplace(x,y)\mumeasure\mumeasure(x,y)
	+ \frac{\kappa^{-1}}{\alpha}
		\vortexstrength_\epsilon^2\log\log\frac{1}{\epsilon}
		+ \ncste\vortexstrength_\epsilon^2 .
	\end{multline*}
Using \cref{regularestimate} and the previous estimates on the distance at
the boundary, we obtain for sufficiently small $\epsilon>0$:
	\begin{align*}
	\vortexstrength_\epsilon^2\log\frac{1}{\distance(X_\epsilon,Y_\epsilon)}
	&\leq \vortexstrength_\epsilon^2
		\log\frac{1}{\max\{\distance(Y_\epsilon,\boundary\domain),
			\distance(X_\epsilon,\boundary\domain)\}}
	+ \frac{2-\kappa}{2\tau(1-\tau)\kappa\alpha}
		\vortexstrength_\epsilon^2\log\log\frac{1}{\epsilon} 
	\\&\leq \vortexstrength_\epsilon^2
		\log\Bigg(\ncste\bigg(\log\frac{1}{\epsilon}\bigg)^{\frac{2-\kappa}{\kappa\alpha}		\big(\frac{1}{\min\{\tau^2,(1-\tau)^2\}}+\frac{1}{2\tau(1-\tau)}\big)}\Bigg),
	\end{align*}
which gives us the a desired estimate
	\[ \distance(X_\epsilon,Y_\epsilon)
		\gtrsim \ncste\Bigg( \frac{1}{\log\frac{1}{\epsilon}} \Bigg)^{\frac{2-\kappa}{\kappa\alpha}\big(\frac{1}{\min\{\tau^2,(1-\tau)^2\}}+\frac{1}{2\tau(1-\tau)}\big)} .\qedhere\]
\end{proof}

\subsection{Accurate localization rule}
In this section we prove an accurate localization rule when
$\depth$ admits two or more maximizers inside $\domain$. In such situation,
we prove that the vortex pair always separates, and never reaches the boundary
$\boundary \domain$. We are not going to use the result of this section in the remaining part of the text.
Rather, we present the results because we think it draws an interesting link with the localization of
vortex pairs for the 2D~Euler equations.
\begin{lemma}\label{iiRepulsion}
Assume that $\depth$ admits at least two maximizers $X^\star,Y^\star$ in $\domain$.
Then we have
	\[ \liminf_{\epsilon\to 0}\distance\big(\{\vortex_\epsilon\neq 0\}, \boundary\domain \big) > 0 ,\]
and
	\[ \liminf_{\epsilon\to 0}\distance\Big( \{\positivepart{\vortex_\epsilon}>0\} , \{\negativepart{\vortex_\epsilon}> 0 \Big) > 0 .\]
\end{lemma}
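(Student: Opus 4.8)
The plan is to rerun the energy-comparison scheme used to prove \cref{repulsionLog}, but to feed it a much better competitor. Since $\depth$ has two maximizers $X^\star,Y^\star$ inside $\domain$, which we may assume distinct so that $|X^\star-Y^\star|>0$, we symmetrize the signed parts around these \emph{fixed interior} points rather than around points $X^\star_\epsilon,Y^\star_\epsilon$ forced towards $\boundary\domain$: set
\[ \tilde{\vortex}_\epsilon = \symmetrizearoundpoint{X^\star}{\positivepart{\vortex_\epsilon}} - \symmetrizearoundpoint{Y^\star}{\negativepart{\vortex_\epsilon}} . \]
By \cref{symmetrizearoundpoint}, and because for small $\epsilon$ the two summands have disjoint supports (their radii are $O(\epsilon^\varsigma)$ by \cref{strongConcentration} while $|X^\star-Y^\star|>0$), $\tilde{\vortex}_\epsilon$ is a $\mu$-rearrangement of $\vortex_\epsilon$, hence $\energy_\epsilon(\tilde{\vortex}_\epsilon)\le\energy_\epsilon(\vortex_\epsilon)$. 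No interior cone condition is needed, and — this is the whole point — $\distance(X^\star,\boundary\domain)$, $\distance(Y^\star,\boundary\domain)$, $|X^\star-Y^\star|$ and $\inf_{B(X^\star,R)\cup B(Y^\star,R)}\depth$ are now fixed positive constants, while $\depth(X^\star)=\depth(Y^\star)=\sup_\domain\depth$.

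Next I would reproduce the two energy bounds of \cref{repulsionLog} with this competitor. For the lower bound on $\energy_\epsilon(\tilde{\vortex}_\epsilon)$: expand via the integral representation~\eqref{integralRepresentation}, use the boundedness of $\rectifykernel$ and of $\staticflow_0,\dots,\staticflow_\nbislands$ and the positivity of $\greenlaplace$ and $\regulargreenlaplace$; since by \cref{strongConcentration} the supports of $\symmetrizearoundpoint{X^\star}{\positivepart{\vortex_\epsilon}}$ and $\symmetrizearoundpoint{Y^\star}{\negativepart{\vortex_\epsilon}}$ are balls of radius $O(\epsilon^\varsigma)$ about $X^\star$, resp.\ $Y^\star$, which sit at a fixed distance from each other and from $\boundary\domain$, the $\regulargreenlaplace$-self-interactions and the $\greenlaplace$-cross-interaction of $\tilde{\vortex}_\epsilon$ are each $O(\vortexstrength_\epsilon^2)$ (use the upper bound of \cref{regularestimate} for $\regulargreenlaplace$, and $|x-y|\ge\tfrac12|X^\star-Y^\star|$ on the cross term). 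Thus
\[ \energy_\epsilon(\tilde{\vortex}_\epsilon)\ge \int_\domain\positivepart{\tilde{\vortex}_\epsilon}\positivefirstorderflow{\tilde{\vortex}_\epsilon}\mumeasure + \int_\domain\negativepart{\tilde{\vortex}_\epsilon}\negativefirstorderflow{\tilde{\vortex}_\epsilon}\mumeasure - C\vortexstrength_\epsilon^2 , \]
which is exactly \eqref{energyBelowEstimate} with the $\vortexstrength_\epsilon^2\log\log\tfrac1\epsilon$ term deleted. For the upper bound on $\energy_\epsilon(\vortex_\epsilon)$, use \cref{RieszSobolev} with the constant families $X^\star_\epsilon\equiv X^\star$, $Y^\star_\epsilon\equiv Y^\star$ (legitimate: $\mu(B(X^\star,R))\ge\epsilon^2$ eventually and $\inf_{B(X^\star,R)}\depth>0$); the correction $\log\sqrt{\depth(X)/\depth(X^\star)}$ is $\le 0$ because $\depth(X^\star)=\sup_\domain\depth$, the moduli-of-continuity errors are $o(\vortexstrength_\epsilon^2)$ since the relevant diameters are $O(\epsilon^\varsigma)$, and the $\tfrac{\sup_\domain\depth}{\depth}$ weight that appears costs only $o(\vortexstrength_\epsilon^2)$ on a set at distance $O(\epsilon^\varsigma)$ from a maximizer. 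This reproduces \eqref{energyUpperEstimate} with $X^\star_\epsilon,Y^\star_\epsilon$ replaced by $X^\star,Y^\star$. Subtracting the two bounds, the first-order-flow terms cancel and one is left with: the sum of the $\depth\regulargreenlaplace$-self-interaction of $\positivepart{\vortex_\epsilon}$, that of $\negativepart{\vortex_\epsilon}$, and the $(\depth(x)+\depth(y))\greenlaplace$-cross-interaction of $\positivepart{\vortex_\epsilon}$ and $\negativepart{\vortex_\epsilon}$ is at most $C\vortexstrength_\epsilon^2$.

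From here the argument runs exactly as at the end of the proof of \cref{repulsionLog}, with the harmless constant $C\vortexstrength_\epsilon^2$ in place of $\vortexstrength_\epsilon^2\log\log\tfrac1\epsilon$ throughout. By positivity of $\greenlaplace,\regulargreenlaplace$, each of the three interaction integrals is $\le C\vortexstrength_\epsilon^2$. Inserting the lower bound of \cref{regularestimate} into the first and recalling (via \cref{strongConcentration}) that $\{\positivepart{\vortex_\epsilon}>0\}\subseteq B(X_\epsilon,\epsilon^\varsigma)$ for any $X_\epsilon$ in it yields $\tau^2\vortexstrength_\epsilon^2\log\tfrac{1}{\distance(X_\epsilon,\boundary\domain)+O(\epsilon^\varsigma)}\le C\vortexstrength_\epsilon^2$, hence $\liminf_{\epsilon\to0}\distance(X_\epsilon,\boundary\domain)>0$; the same with the second integral gives $\liminf_{\epsilon\to0}\distance(Y_\epsilon,\boundary\domain)>0$. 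Writing $\greenlaplace=\tfrac1{2\pi}\log\tfrac{\diameter(\domain)}{|\cdot|}-\regulargreenlaplace$ and inserting the \emph{upper} bound of \cref{regularestimate} for $\regulargreenlaplace$ into the cross term, together with the lower bound on $\distance(X_\epsilon,\boundary\domain)$ just obtained, gives $2\tau(1-\tau)\,\vortexstrength_\epsilon^2\log\tfrac{c}{\distance(X_\epsilon,Y_\epsilon)+O(\epsilon^\varsigma)}\le C\vortexstrength_\epsilon^2$ for a fixed $c>0$, whence $\liminf_{\epsilon\to0}\distance(X_\epsilon,Y_\epsilon)>0$. Since $\{\positivepart{\vortex_\epsilon}>0\}$ and $\{\negativepart{\vortex_\epsilon}>0\}$ lie in $\epsilon^\varsigma$-balls about $X_\epsilon$ and $Y_\epsilon$, and $\{\vortex_\epsilon\neq0\}$ is their union, these three facts give both stated conclusions.

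The main obstacle is the lower bound on $\energy_\epsilon(\tilde{\vortex}_\epsilon)$: one must ensure that the competitor's energy falls short of the leading term $\vortexstrength_\epsilon^2\log\tfrac1\epsilon$ by only $O(\vortexstrength_\epsilon^2)$, with \emph{no} $\log\log\tfrac1\epsilon$ loss. This is precisely where the essential concentration of \cref{strongConcentration} (supports of diameter $O(\epsilon^\varsigma)$, not merely $o(1)$) is indispensable, and where the presence of two \emph{interior} maximizers at a fixed mutual distance — as opposed to a single, possibly boundary, maximizer — is what both dispenses with the interior cone condition and upgrades the $(\log\tfrac1\epsilon)^{-\gamma}$ bounds of \cref{repulsionLog} to genuine positive $\liminf$s.
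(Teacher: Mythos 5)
Your proposal is correct and is essentially the paper's own argument: the paper's proof also takes the competitor $\symmetrizearoundpoint{X^\star}{\positivepart{\vortex_\epsilon}}-\symmetrizearoundpoint{Y^\star}{\negativepart{\vortex_\epsilon}}$ built on the two fixed interior maximizers and observes that, rerunning the estimates of \cref{repulsionLog}, the $\vortexstrength_\epsilon^2\log\log\frac{1}{\epsilon}$ loss in \eqref{energyBelowEstimate} becomes an $O(\vortexstrength_\epsilon^2)$ constant, whence the positive $\liminf$s. You have merely written out in detail the steps the paper leaves implicit.
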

\begin{proof}
Let us consider, for sufficiently small $\epsilon>0$, the function
	\[ \vortex_\epsilon^\star=\symmetrizearoundpoint{X^\star}{\positivepart{\vortex_\epsilon}} - \symmetrizearoundpoint{Y^\star}{\negativepart{\vortex_\epsilon}} .\]
The function $\vortex_\epsilon^\star$ is a $\mu$-rearrangement of $\vortex_\epsilon$, and
in particular we must have
$\energy_\epsilon(\vortex_\epsilon^\star)\leq\energy_\epsilon(\vortex_\epsilon)$.
Now one just should consider the same estimates than those made in the proof
of \cref{repulsionLog}. The $\log\log\frac{1}{\epsilon}$ term from equation~\eqref{energyBelowEstimate}
would reduce to a constant error term, and the conclusion would directly follow.
\end{proof}
\begin{proposition}\label{accurateRule}
Let $F:\domain\times\domain\to\reals$ be the function defined by
	\[ F(x,y) = -\depth(x)\regulargreenlaplace(x,y) + \rectifykernel(x,y)
		+ \sum^{\nbislands}_{i=0}(\staticflow_i(x)-\circulation_i)\Big[
		\mathcal{A}^{-1} \big[\staticflow_j(y)-\circulation_j\big]_{0\leq j\leq \nbislands}
		\Big]_i ,\]
and let $G:\domain\times\domain$ be the function defined by
	\[ G(x,y) = \frac{\depth(x)+\depth(y)}{2\pi}\log\frac{\diameter(\domain)}{|x-y|} .\]
Assume that $\depth$ admits at least two maximizers $X,Y$ in $\domain$ such that
	\[ \liminf_{\epsilon\to 0}\distance\big( \{\positivepart{\vortex_\epsilon}>0 \}, X \big) = 0 ,
		\qquad \liminf_{\epsilon\to 0}\distance\big( \{\negativepart{\vortex_\epsilon}>0 \}, Y \big) = 0 . \]
Then $(X,Y)$ minimizes the function
	\begin{align*}
		(x,y)&\in\domain\times\domain
		\\&\mapsto \bigg[\tau(1-\tau)G(x,y)
		-\tau^2F(x,x) - (1-\tau)^2F(y,y) +\tau(1-\tau)\big(F(x,y)+F(y,x)\big)\bigg]
	\end{align*}
over the set $\big(\domain\cap\{\depth=\sup_\domain\depth\}\big)\times\big(\domain\cap\{\depth=\sup_\domain\depth\}\big)$.
\end{proposition}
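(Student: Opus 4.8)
The plan is to compare the maximiser $\vortex_\epsilon$ with symmetrised competitors centred at arbitrary points of maximal depth, and to read the localisation rule off the $\vortexstrength_\epsilon^2$--order term of the energy expansion, all the larger, $\vortexstrength_\epsilon^2\log\tfrac1\epsilon$--order, contributions being shown to depend on the centres only up to $o(\vortexstrength_\epsilon^2)$. First I would fix a subsequence realising the two $\liminf$'s in the hypothesis, so $\distance(\{\positivepart{\vortex_\epsilon}>0\},X)\to0$ and $\distance(\{\negativepart{\vortex_\epsilon}>0\},Y)\to0$. As $X\neq Y$ are interior maximisers of $\depth$, \cref{strongConcentration} forces $\diameter(\{\positivepart{\vortex_\epsilon}>0\})+\diameter(\{\negativepart{\vortex_\epsilon}>0\})\lesssim\epsilon^\varsigma$, makes the two supports remain at distance $\geq c>0$ from one another and from $\boundary\domain$ (compare \cref{iiRepulsion}), and makes $\depth$ tend to $\sup_\domain\depth$ uniformly on them. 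Pick $X_\epsilon\in\{\positivepart{\vortex_\epsilon}>0\}$, $Y_\epsilon\in\{\negativepart{\vortex_\epsilon}>0\}$; for $a,b$ in a fixed small neighbourhood of the maximal--depth set put $W_\epsilon(a,b):=\energy_\epsilon\bigl(\symmetrizearoundpoint{a}{\positivepart{\vortex_\epsilon}}-\symmetrizearoundpoint{b}{\negativepart{\vortex_\epsilon}}\bigr)$, which by \cref{symmetrizearoundpoint} is the energy of a $\mu$--rearrangement of $\vortex_\epsilon$, hence $W_\epsilon(a,b)\leq\energy_\epsilon(\vortex_\epsilon)$.

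The heart of the argument is two asymptotic identities, valid along the subsequence. First, writing $\energy_\epsilon(\vortex_\epsilon)=\tfrac12\int_\domain\vortex_\epsilon\vortexoperator\vortex_\epsilon\mumeasure$ and expanding $\vortexoperator\vortex_\epsilon$ through~\eqref{integralRepresentation}, I would split $\energy_\epsilon(\vortex_\epsilon)$ into the self--interactions of $\positivepart{\vortex_\epsilon}$ and of $\negativepart{\vortex_\epsilon}$ and their cross interaction. In each self--interaction, replace the $\depth$--weight in the singular kernel by $\sup_\domain\depth$ — the error being $\lesssim\omega_\depth(\epsilon^\varsigma)\,\vortexstrength_\epsilon^2\log\tfrac1\epsilon=o(\vortexstrength_\epsilon^2)$ since $\depth$ is Hölder and $\iint\log\frac{\diameter(\domain)}{|x-y|}\positivepart{\vortex_\epsilon}(x)\positivepart{\vortex_\epsilon}(y)\mumeasure\mumeasure\lesssim\vortexstrength_\epsilon^2\log\tfrac1\epsilon$ by \cref{upperBoundFirstOrder} — and then apply the asymptotic Riesz--Sobolev inequality \cref{RieszSobolev} with centre $X^\star_\epsilon$ and free evaluation point both taken at $X_\epsilon$ (the ball condition holding for small $\epsilon$ as $X_\epsilon\to X\in\domain$), so that its $\log\sqrt{\depth(\cdot)/\depth(\cdot)}$ term vanishes and, after multiplying its estimate (stated for the normalised measures $\dif\positivepart{\vortex_\epsilon}$) by $\norm{\positivepart{\vortex_\epsilon}}{L^1_{\mu}}^2=(\tau\vortexstrength_\epsilon)^2$, its $\omega_\depth$--errors become $o(\vortexstrength_\epsilon^2)$; this bounds the singular self--interaction of $\positivepart{\vortex_\epsilon}$ by that of the ball--shaped $\symmetrizearoundpoint{X_\epsilon}{\positivepart{\vortex_\epsilon}}$, and likewise for $\negativepart{\vortex_\epsilon}$. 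The regular parts $\iint F(x,y)\positivepart{\vortex_\epsilon}(x)\positivepart{\vortex_\epsilon}(y)\mumeasure\mumeasure$ tend to $\tau^2\vortexstrength_\epsilon^2 F(X,X)+o(\vortexstrength_\epsilon^2)$ by continuity of $F$ near the diagonal (away from $\boundary\domain$, where the vortex lives) together with $\diameter(\{\positivepart{\vortex_\epsilon}>0\})\to0$; and the cross interaction, for which no symmetrisation is needed since the supports stay a fixed distance apart, tends to $-\tau(1-\tau)\vortexstrength_\epsilon^2\bigl(\tfrac12 G(X,Y)+\tfrac12(F(X,Y)+F(Y,X))\bigr)+o(\vortexstrength_\epsilon^2)$. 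Collecting terms, $\energy_\epsilon(\vortex_\epsilon)\leq\mathcal{G}_\epsilon-\tfrac12\vortexstrength_\epsilon^2\,\Phi(X,Y)+o(\vortexstrength_\epsilon^2)$, where $\Phi$ is the function displayed in the statement, $\mathcal{G}_\epsilon$ is the sum of the singular self--interactions of $\symmetrizearoundpoint{X_\epsilon}{\positivepart{\vortex_\epsilon}}$ and $\symmetrizearoundpoint{Y_\epsilon}{\negativepart{\vortex_\epsilon}}$, and the precise normalising constant in front of $\Phi$ is immaterial since it cancels below. Second, $\mathcal{G}_\epsilon$ does not depend on the centre to order $o(\vortexstrength_\epsilon^2)$: for any nearly maximal--depth $a$, the super--level sets of $\symmetrizearoundpoint{a}{\positivepart{\vortex_\epsilon}}$ are Euclidean balls of radius $\bigl(1+O(\omega_\depth(\epsilon^\varsigma))\bigr)\sqrt{\mu(\{\positivepart{\vortex_\epsilon}\geq\lambda\})/(\pi\sup_\domain\depth)}$, a radial profile insensitive to $a$ to leading order, so its singular self--interaction equals the common value up to $O(\omega_\depth(\epsilon^\varsigma)\vortexstrength_\epsilon^2\log\tfrac1\epsilon)=o(\vortexstrength_\epsilon^2)$; running the same expansion for $W_\epsilon$ gives $W_\epsilon(a,b)=\mathcal{G}_\epsilon-\tfrac12\vortexstrength_\epsilon^2\,\Phi(a,b)+o(\vortexstrength_\epsilon^2)$ when $a\neq b$, while $W_\epsilon(a,a)\leq\mathcal{G}_\epsilon$, consistently with $\Phi(a,a)=+\infty$.

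To conclude, let $X',Y'\in\domain\cap\{\depth=\sup_\domain\depth\}$. If $X'\neq Y'$, combining $W_\epsilon(X',Y')\leq\energy_\epsilon(\vortex_\epsilon)$ with the two identities gives $\mathcal{G}_\epsilon-\tfrac12\vortexstrength_\epsilon^2\Phi(X',Y')+o(\vortexstrength_\epsilon^2)\leq\mathcal{G}_\epsilon-\tfrac12\vortexstrength_\epsilon^2\Phi(X,Y)+o(\vortexstrength_\epsilon^2)$; the (large) quantities $\mathcal{G}_\epsilon$ cancel, and after dividing by $\vortexstrength_\epsilon^2$ and letting $\epsilon\to0$ along the subsequence one gets $\Phi(X,Y)\leq\Phi(X',Y')$. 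If $X'=Y'$ there is nothing to prove, $\Phi$ being $+\infty$ on the diagonal. As $(X',Y')$ was arbitrary, $(X,Y)$ minimises $\Phi$ over $\bigl(\domain\cap\{\depth=\sup_\domain\depth\}\bigr)\times\bigl(\domain\cap\{\depth=\sup_\domain\depth\}\bigr)$.

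The main difficulty is the second identity of the central step: the singular self--interaction terms are genuinely of order $\vortexstrength_\epsilon^2\log\tfrac1\epsilon$, i.e.\ of \emph{larger} order than the functional $\Phi$ to be optimised, so the whole argument rests on their dependence on the symmetrisation centre being $o(\vortexstrength_\epsilon^2)$. This is exactly what the carefully quantified error terms of \cref{RieszSobolev} are designed to provide, and it works only because $\omega_\depth(\epsilon^\varsigma)\log\tfrac1\epsilon\to0$, a consequence of the Hölder regularity of $\depth$ and the essential concentration \cref{strongConcentration}. A secondary technical point is the continuity of $F$ in the region, bounded away from $\boundary\domain$, where the vortex concentrates, used in handling the regular parts.
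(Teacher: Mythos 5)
Your proposal is correct and follows essentially the same route as the paper: compare $\energy_\epsilon(\vortex_\epsilon)$ with the energy of the symmetrized competitor $\symmetrizearoundpoint{X'}{\positivepart{\vortex_\epsilon}}-\symmetrizearoundpoint{Y'}{\negativepart{\vortex_\epsilon}}$, use \cref{RieszSobolev} so that the $\vortexstrength_\epsilon^2\log\frac1\epsilon$ self-interaction terms cancel up to $o(\vortexstrength_\epsilon^2)$, and pass to the limit in the $F$- and $G$-terms via concentration and continuity. The only cosmetic difference is that you symmetrize about a point $X_\epsilon$ of the support and then argue the leading term $\mathcal{G}_\epsilon$ is center-independent, whereas the paper applies \cref{RieszSobolev} directly with the arbitrary maximal-depth center $X^\star$, absorbing that step into the stated error terms.
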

It would be interesting to derive an analogous for \cref{accurateRule} in the situation
where the maximizers of $\depth$ are not necessarily inside $\domain$.
Since we always have
	\[ \tau^2 + (1-\tau)^2 \geq 2\tau(1-\tau) ,\]
we would expect that the pair first tries to remain far form $\boundary\domain$
\emph{before} trying to be separated. However, a close inspection of
the proofs shows that the precise modulus of continuity of $\depth$
comes into play. We think that a further analysis of the auxiliary
function $\rectifykernel$ would be suitable to answer that question.
\begin{proof}[Proof of \cref{accurateRule}]
Let $X^\star,Y^\star\in\domain$ be such that $\depth(X^\star)=\sup_\domain\depth=\depth(Y^\star)$ and $X^\star\neq Y^\star$.
We consider $\epsilon>0$ so small that the function
	\[ \vortex^\star_\epsilon
		= \symmetrizearoundpoint{X^\star}{\positivepart{\vortex_\epsilon}}
		- \symmetrizearoundpoint{Y^\star}{\negativepart{\vortex_\epsilon}} \]
is a $\mu$-rearrangement of $\vortex_\epsilon$. In particular we have
$\energy_\epsilon(\vortex_\epsilon^\star)\leq\energy_\epsilon(\vortex_\epsilon)$.
We are going to expand these energies. Let us write in all generality
	\begin{multline*}
		\energy_\epsilon(\vortex^\star_\epsilon)
			= \int_\domain\positivepart{\vortex^\star_\epsilon}
				\positivefirstorderflow{\vortex^\star_\epsilon}\mumeasure
			+ \int_\domain\negativepart{\vortex^\star_\epsilon}
				\negativefirstorderflow{\vortex^\star_\epsilon}\mumeasure
			\\-\iint_{\domain\times\domain}G\ 
				\positivepart{\vortex^\star_\epsilon}
				\negativepart{\vortex^\star_\epsilon}
			\mumeasure\mumeasure(x,y)
			+ \iint_{\domain\times\domain}F\ 
				\vortex^\star_\epsilon\vortex^\star_\epsilon
				\mumeasure\mumeasure ,
	\end{multline*}
where $F$ and $G$ are defined as in the statement, and recall that since $(\domain,\depth)$ is a continuous lake,
the function $F$ is continuous on $\domain\times\domain$.
Because both $\{\vortex^\star_\epsilon:\epsilon>0\}$ and $\{\vortex_\epsilon:\epsilon>0\}$
concentrate, we have
	\begin{multline*}
	\liminf_{\epsilon\to 0}\frac{1}{\vortexstrength_\epsilon^2}
		\iint_{\domain\times\domain}F\ \vortex_\epsilon\vortex_\epsilon
		\mumeasure\mumeasure
	\\= \tau^2F(X,X) + (1-\tau)^2F(Y,Y)
		- \tau(1-\tau)\big(F(X,Y)+F(Y,X)\big) ,
	\end{multline*}
while
	\begin{multline*}
	\liminf_{\epsilon\to 0}\frac{1}{\vortexstrength_\epsilon^2}
		\iint_{\domain\times\domain}F\ \vortex^\star_\epsilon\vortex^\star_\epsilon
		\mumeasure\mumeasure
	\\= \tau^2F(X^\star,X^\star) + (1-\tau)^2F(Y^\star,Y^\star)
		- \tau(1-\tau)\big(F(X^\star,Y^\star)+F(Y^\star,X^\star)\big) .
	\end{multline*}
On the other hand,
because $X^\star$ and $Y^\star$ are exact maximizers of $\depth$,
we may apply Riesz-Sobolev rearrangement inequality, \cref{RieszSobolev},
to see that there exists $\error_\epsilon>0$ with
$\lim\limits_{\epsilon\to 0}\error_\epsilon = 0$,
and such that
	\begin{multline*}
		\energy_\epsilon(\vortex^\star_\epsilon)
			= \big(\sup_\domain\depth\big)
				\int_\domain\positivepart{\vortex^\star_\epsilon}
				\depth^{-1}\positivefirstorderflow{\vortex^\star_\epsilon}\mumeasure
			+ \big(\sup_\domain\depth\big)
				\int_\domain\negativepart{\vortex^\star_\epsilon}
				\depth^{-1}\negativefirstorderflow{\vortex^\star_\epsilon}\mumeasure
			\\+\iint_{\domain\times\domain}\big(\depth(x)+\depth(y)\big)
				\greenlaplace(x,y)
				\positivepart{\vortex^\star_\epsilon}(x)
				\negativepart{\vortex^\star_\epsilon}(y)
			\mumeasure\mumeasure(x,y)
			\\+ \iint_{\domain\times\domain}F(x,y)
				\vortex^\star_\epsilon(x)\vortex^\star_\epsilon(y)
				\mumeasure\mumeasure(x,y)
			+ \error_\epsilon\vortexstrength_\epsilon^2 .
	\end{multline*}
This yields to the inequality
	\begin{multline*}
		\frac{1}{\vortexstrength_\epsilon^2}\iint_{\domain\times\domain}G\ 
				\positivepart{\vortex_\epsilon}
				\negativepart{\vortex_\epsilon}
			\mumeasure\mumeasure
		-\frac{1}{\vortexstrength_\epsilon^2}
		\iint_{\domain\times\domain}F\ \vortex_\epsilon\vortex_\epsilon
		\mumeasure\mumeasure
		\\\leq\frac{1}{\vortexstrength_\epsilon^2}\iint_{\domain\times\domain}G\ 
				\positivepart{\vortex^\star_\epsilon}
				\negativepart{\vortex^\star_\epsilon}
			\mumeasure\mumeasure
		-\frac{1}{\vortexstrength_\epsilon^2}
		\iint_{\domain\times\domain}F\ \vortex^\star_\epsilon\vortex^\star_\epsilon
		\mumeasure\mumeasure  +\error_\epsilon
	\end{multline*}
This latter inequality holds for all couple of points $(X^\star,Y^\star)\in\domain\times\domain$ with $X^\star\neq Y^\star$ and both $X^\star,Y^\star$ maximize $\depth$.
Letting $\epsilon\to 0$ and applying Fatou's lemma on the left hand side,
we obtain the desired conclusion.
\end{proof}

\section{Asymptotic shape}

In this section we prove that the vortex pair asymptotically looks like
two symmetric functions. We base our analysis on results obtained by Burchard~\&~Guo~\cite{BurchardGuo}*{Lemma~3.2}
on the asymptotic shape of asymptotic maximizers of singular integrals.
More precisely, our goal is to prove that
	\[ \limsup_{\epsilon\to 0}\bigg\{ \energy_\epsilon\big(\vortex_\epsilon\big)
		- \energy_\epsilon\big( \vortex_\epsilon^\star\big)\bigg\} = 0 ,\]
for some radially symmetric $\mu$-rearrangement $\vortex_\epsilon^\star$ of the energy maximizer
$\vortex_\epsilon$. Then we exploit this information to obtain convergence in shape of the
rescaled maximizers.

\subsection{Preliminary study of oscillations}

The aim of this paragraph is to prove the following asymptotic limit estimate:
\begin{proposition}\label{smallOsc}
For all $X_\epsilon\in\{\positivepart{\vortex_\epsilon}>0\}$, let
$\vortex^\star_\epsilon=\symmetrizearoundpoint{X_\epsilon}{\positivepart{\vortex_\epsilon}}
		- \negativepart{\vortex_\epsilon}$. We have
	\begin{multline*}
		\liminf_{\epsilon\to0}\Bigg\{
		\iint_{\domain\times\domain}F(x,y)\vortex_\epsilon(x)\vortex_\epsilon(y)\mumeasure\mumeasure(x,y)
		-\iint_{\domain\times\domain}G(x,y)\positivepart{\vortex_\epsilon}(x)\negativepart{\vortex_\epsilon}(y)\mumeasure\mumeasure(x,y)
			\\- \iint_{\domain\times\domain}F(x,y)\vortex^\star_\epsilon(x)\vortex^\star_\epsilon(y)\mumeasure\mumeasure(x,y)
		+\iint_{\domain\times\domain}G(x,y)\positivepart{\vortex^\star_\epsilon}(x)\negativepart{\vortex^\star_\epsilon}(y)\mumeasure\mumeasure(x,y)
		\Bigg\} = 0 .
	\end{multline*}
\end{proposition}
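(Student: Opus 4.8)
The plan is to compare $\energy_\epsilon(\vortex_\epsilon)$ with $\energy_\epsilon(\vortex^\star_\epsilon)$, using that $\vortex^\star_\epsilon$ is a $\mu$-rearrangement of the maximizer $\vortex_\epsilon$ so that $\energy_\epsilon(\vortex^\star_\epsilon)\le\energy_\epsilon(\vortex_\epsilon)$, and to read off the displayed quantity from this inequality. First I would check that $\vortex^\star_\epsilon\in\rearrangement(\vortex_\epsilon)$ for all small $\epsilon$: by \cref{symmetrizearoundpoint} the function $\symmetrizearoundpoint{X_\epsilon}{\positivepart{\vortex_\epsilon}}$ is a $\mu$-rearrangement of $\positivepart{\vortex_\epsilon}$ supported in a ball $B(X_\epsilon,\rho_\epsilon)$ with $\mu(B(X_\epsilon,\rho_\epsilon))=\mu(\{\positivepart{\vortex_\epsilon}>0\})\le\epsilon^2$, and since $\depth(X_\epsilon)\to\sup_\domain\depth>0$ by \cref{weakConcentration} this forces $\rho_\epsilon\lesssim\epsilon$; as $X_\epsilon\in\{\positivepart{\vortex_\epsilon}>0\}$ and \cref{repulsionLog} keeps $\{\positivepart{\vortex_\epsilon}>0\}$ and $\{\negativepart{\vortex_\epsilon}>0\}$ at distance $\gtrsim(\log\tfrac1\epsilon)^{-\gamma_3}$, which is much larger than $\epsilon$, the ball $B(X_\epsilon,\rho_\epsilon)$ is disjoint from $\{\negativepart{\vortex_\epsilon}>0\}$. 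Hence $\positivepart{\vortex^\star_\epsilon}=\symmetrizearoundpoint{X_\epsilon}{\positivepart{\vortex_\epsilon}}$, $\negativepart{\vortex^\star_\epsilon}=\negativepart{\vortex_\epsilon}$, $\vortex^\star_\epsilon\in\rearrangement(\vortex_\epsilon)$ and $\energy_\epsilon(\vortex^\star_\epsilon)\le\energy_\epsilon(\vortex_\epsilon)$.

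Expanding both energies as in the proofs of \cref{repulsionLog} and \cref{accurateRule},
\begin{multline*}
\energy_\epsilon(\vortex)=\int_\domain\positivepart{\vortex}\positivefirstorderflow{\vortex}\mumeasure+\int_\domain\negativepart{\vortex}\negativefirstorderflow{\vortex}\mumeasure
\\-\iint_{\domain\times\domain}G\,\positivepart{\vortex}(x)\negativepart{\vortex}(y)\mumeasure\mumeasure(x,y)+\iint_{\domain\times\domain}F\,\vortex(x)\vortex(y)\mumeasure\mumeasure(x,y),
\end{multline*}
the negative first-order flow terms for $\vortex_\epsilon$ and $\vortex^\star_\epsilon$ coincide because $\negativepart{\vortex^\star_\epsilon}=\negativepart{\vortex_\epsilon}$, and $\energy_\epsilon(\vortex^\star_\epsilon)\le\energy_\epsilon(\vortex_\epsilon)$ rearranges into
\[ Q_\epsilon\ \ge\ \int_\domain\positivepart{\vortex^\star_\epsilon}\positivefirstorderflow{\vortex^\star_\epsilon}\mumeasure-\int_\domain\positivepart{\vortex_\epsilon}\positivefirstorderflow{\vortex_\epsilon}\mumeasure, \]
where $Q_\epsilon$ denotes the quantity inside the braces in the statement. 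It then remains to bound the right-hand side below by $-o(\vortexstrength_\epsilon^2)$, and to establish the matching upper bound $Q_\epsilon\le o(\vortexstrength_\epsilon^2)$.

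For the lower bound I would invoke \cref{RieszSobolev} with $X^\star_\epsilon=X=X_\epsilon$, so that $\log\sqrt{\depth(X)/\depth(X^\star_\epsilon)}=0$ and the error term collapses to $C\,\omega_\depth(\epsilon^\varsigma)$ thanks to \cref{strongConcentration}. The delicate point is that the first-order flow $\positivefirstorderflow{\vortex}(x)=\tfrac{\depth(x)}{4\pi}\int_\domain\log\tfrac{\diameter(\domain)}{|x-y|}\positivepart{\vortex}(y)\mumeasure(y)$ carries the weight $\depth(x)$, whereas \cref{RieszSobolev} compares the \emph{unweighted} kernel, and the flows are of size $\vortexstrength_\epsilon^2\log\tfrac1\epsilon$; replacing $\depth(x)$ by $\sup_\domain\depth$ would only cost an uncontrolled $o(1)$, and $o(1)\cdot\vortexstrength_\epsilon^2\log\tfrac1\epsilon$ is not $o(\vortexstrength_\epsilon^2)$. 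The remedy is to compare $\depth(x)$ with the \emph{common} value $\depth(X_\epsilon)$: both $\{\positivepart{\vortex_\epsilon}>0\}$ and $B(X_\epsilon,\rho_\epsilon)$ lie in $B(X_\epsilon,\epsilon^\varsigma)$, so $|\depth(x)-\depth(X_\epsilon)|\le\omega_\depth(\epsilon^\varsigma)\lesssim\epsilon^{\varsigma\alpha}$ there (with $\depth\in C^{0,\alpha}(\closure{\domain})$), so that $\depth(X_\epsilon)$ only multiplies the Riesz--Sobolev difference, not the full $\vortexstrength_\epsilon^2\log\tfrac1\epsilon$-sized integrals, and the troublesome discrepancy $\sup_\domain\depth-\depth(X_\epsilon)$ never appears. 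Combining this with \cref{RieszSobolev} and with the elementary estimate preceding \cref{upperBoundFirstOrder} (and \eqref{distribConstraint}), which gives $\iint_{\domain\times\domain}\log\tfrac{\diameter(\domain)}{|x-y|}\positivepart{\vortex^\star_\epsilon}(x)\positivepart{\vortex^\star_\epsilon}(y)\mumeasure\mumeasure\lesssim\vortexstrength_\epsilon^2\log\tfrac1\epsilon$, I would obtain that the right-hand side above is $\gtrsim-\epsilon^{\varsigma\alpha}\vortexstrength_\epsilon^2\log\tfrac1\epsilon-\epsilon^{\varsigma\alpha}\vortexstrength_\epsilon^2=-o(\vortexstrength_\epsilon^2)$, since $\epsilon^{\varsigma\alpha}\log\tfrac1\epsilon\to0$. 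Hence $\liminf_{\epsilon\to0}Q_\epsilon\ge0$.

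For the matching upper bound I would argue directly from concentration. By \cref{strongConcentration} both $\positivepart{\vortex_\epsilon}$ and $\positivepart{\vortex^\star_\epsilon}$ are supported in $B(X_\epsilon,\epsilon^\varsigma)$ with the same $L^1_\mu$-mass, the negative parts agree, and by \cref{repulsionLog} the positive and negative supports stay at distance $\gtrsim(\log\tfrac1\epsilon)^{-\gamma_3}$ from one another and $\gtrsim(\log\tfrac1\epsilon)^{-\gamma_1}$ from $\boundary\domain$; consequently $F$ — continuous on $\domain\times\domain$, hence uniformly continuous on the shrinking interior region where the vortices live — and $G$ — away from its logarithmic singularity, since $|x-y|\gtrsim(\log\tfrac1\epsilon)^{-\gamma_3}$ — each vary by $o(1)$ across these supports, while every purely-negative contribution is literally unchanged. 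This gives $Q_\epsilon=o(\vortexstrength_\epsilon^2)$, hence $\liminf_{\epsilon\to0}Q_\epsilon\le0$, and together with the previous paragraph the proposition follows. The main obstacle is the weighted/unweighted reconciliation in the third paragraph: linearizing $\depth$ about the common centre $X_\epsilon$ of the vortex support and of the symmetrized ball is exactly what converts the otherwise fatal $o(1)\cdot\log\tfrac1\epsilon$ loss into the harmless $\epsilon^{\varsigma\alpha}\log\tfrac1\epsilon$.
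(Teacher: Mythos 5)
Your first paragraph (checking that $\vortex^\star_\epsilon$ is a genuine $\mu$-rearrangement with $\positivepart{\vortex^\star_\epsilon}=\symmetrizearoundpoint{X_\epsilon}{\positivepart{\vortex_\epsilon}}$ and $\negativepart{\vortex^\star_\epsilon}=\negativepart{\vortex_\epsilon}$, via \cref{repulsionLog}) is correct and worth making explicit. But note that your final paragraph — $F$ and $G$ oscillate by $o(1)$ over the supports, the negative parts are unchanged, hence $Q_\epsilon=o(\vortexstrength_\epsilon^2)$ — is by itself the whole proof, and is exactly the route the paper takes: it yields $\liminf Q_\epsilon\geq 0$ and $\liminf Q_\epsilon\leq 0$ simultaneously. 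Your entire ``lower bound'' half (energy maximality plus \cref{RieszSobolev}, with the careful freezing of $\depth$ at $X_\epsilon$) is therefore a correct but unnecessary detour; it buys nothing that the oscillation argument does not already give.

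The genuine gap is in how you justify the oscillation bound for $F$: ``continuous on $\domain\times\domain$, hence uniformly continuous on the shrinking interior region where the vortices live'' is not a valid inference. The supports are only guaranteed by \cref{repulsionLog} to stay at distance $\gtrsim(\log\frac1\epsilon)^{-\gamma_1}$ from $\boundary\domain$, a quantity tending to $0$ (and when $\depth$ attains its maximum only on $\boundary\domain$ the vortices genuinely drift toward the boundary), while $F$ contains the term $-\depth(x)\regulargreenlaplace(x,y)$, which by \cref{regularestimate} grows like $\log\log\frac1\epsilon$ on that region; a function continuous on the open set $\domain\times\domain$ admits no uniform modulus of continuity on such an exhaustion, so ``$F$ varies by $o(1)$ across the supports'' does not follow from continuity alone. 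The fix is quantitative and is already in the paper: the supports have diameter $\leq\epsilon^\varsigma$ by \cref{strongConcentration}, their distance to $\boundary\domain$ and to each other is only logarithmically small, and the elliptic oscillation estimate of \cref{slowOscillation} — applied to $\rectifykernel(\cdot,y)$, to $\regulargreenlaplace(\cdot,y)$ (the case $\depth\equiv1$), and to the $\staticflow_i$ — together with the continuity of $y\mapsto\norm{\rectifykernel(\cdot,y)}{L^\infty}$, the Hölder continuity of $\depth$, and the elementary Lipschitz bound for $\log\frac{1}{|x-y|}$ at distance $\gtrsim(\log\frac1\epsilon)^{-\gamma_3}$ from the diagonal, gives an oscillation of order $\epsilon^{\sigma}\log\log\frac1\epsilon=o(1)$. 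Replace your uniform-continuity appeal by this quantitative estimate and the proof closes, essentially as the paper's does.
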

The following a priori estimate from elliptic regularity theory is useful:
\begin{proposition}[\cite{GilbardTrudinger}*{Theorem~8.22}]\label{slowOscillation}
Let $\mathcal{U}\subseteq\domain$ be such that $\inf_{\mathcal{U}}\depth>0$.
Let $f_1,f_2\in L^q(\mathcal{U},\lebesguemeasure)$ for some $q>2$,
and $\psi\in W^{1,2}(\mathcal{U})$ be such that, for all $\varphi\in C^1_c(\mathcal{U})$, we have
	\[ \int_{\mathcal{U}}\scalarproduct{\gradient\psi}{\gradient\varphi}{\plane}\ \frac{\dif x}{\depth}
		= \int_{\mathcal{U}}\scalarproduct{(f_1,f_2)}{\gradient\varphi}\ \dif\lebesguemeasure .\]
For all $\gamma,\varsigma>0$, there exists $\sigma>0$ and a constant $C>0$ such that for all $\epsilon>0$
and for all $y\in\mathcal{U}$ with $\distance(y,\boundary\domain)\geq \big(\log\frac{1}{\epsilon}\big)^\gamma$, we have
	\[ \sup_{x\in B(y,\epsilon^\varsigma)}\Big\{ \big|\psi(x)-\psi(y)\big| \Big\}
		\leq C\epsilon^\sigma\bigg( \norm{\psi}{L^\infty(\mathcal{U})} + \norm{(f_1,f_2)}{L^q} \bigg) .\]
\end{proposition}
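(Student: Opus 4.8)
The plan is to read the identity satisfied by $\psi$ as the weak form of a uniformly elliptic divergence-structure equation and to feed it into the classical interior Hölder estimate, tracking how the constants depend on the radius. Testing against $\varphi\in C^1_c(\mathcal{U})$, the identity says precisely that
	\[ -\divergence\big(\depth^{-1}\gradient\psi\big) = -\divergence(f_1,f_2) \qquad\text{on }\mathcal{U} \]
in the weak sense. On $\mathcal{U}$ the coefficient $\depth^{-1}$ is bounded above (since $\inf_{\mathcal{U}}\depth>0$) and below (since $\depth\leq\sup_\domain\depth$), so the equation is uniformly elliptic with an ellipticity ratio $\Lambda$ independent of $\epsilon$; and, the space dimension being $2<q$, the datum $(f_1,f_2)\in L^q(\mathcal{U})$ belongs to the admissible class of \cite{GilbardTrudinger}*{Theorem~8.22}.

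First I would record that theorem in scaled form, obtained from it by a single dilation: there are $\beta\in(0,1)$ and $C>0$, depending only on $\Lambda$ and $q$, such that for every ball with $B(y,2R)\subseteq\mathcal{U}$ and every $r\in(0,R]$,
	\[ \sup_{x\in B(y,r)}\big|\psi(x)-\psi(y)\big|
		\leq C\Big(\frac{r}{R}\Big)^{\beta}\bigg( \norm{\psi}{L^\infty(\mathcal{U})} + R^{1-\frac{2}{q}}\,\norm{(f_1,f_2)}{L^q} \bigg) . \]
Next, given $\gamma,\varsigma>0$ and a point $y$ at distance at least $\rho_\epsilon:=\big(\log\tfrac1\epsilon\big)^{-\gamma}$ from $\boundary\domain$, I would apply this with $R=\rho_\epsilon/2$ and $r=\epsilon^\varsigma$. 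This is legitimate for all small $\epsilon$: since $\rho_\epsilon\to0$ the ball $B(y,\rho_\epsilon)$ lies in $\mathcal{U}$, and since $\epsilon^\varsigma\big(\log\tfrac1\epsilon\big)^\gamma\to0$ we have $\epsilon^\varsigma\leq\rho_\epsilon/2$. Using that $R^{1-2/q}$ stays bounded because $R\leq\diameter(\domain)$, the estimate reduces to
	\[ \sup_{x\in B(y,\epsilon^\varsigma)}\big|\psi(x)-\psi(y)\big|
		\leq C\,\epsilon^{\varsigma\beta}\big(\log\tfrac1\epsilon\big)^{\gamma\beta}\,\bigg( \norm{\psi}{L^\infty(\mathcal{U})} + \norm{(f_1,f_2)}{L^q} \bigg) . \]
Finally, because $\big(\log\tfrac1\epsilon\big)^{\gamma\beta}\epsilon^{\varsigma\beta/2}\to0$, the prefactor is at most $\epsilon^{\varsigma\beta/2}$ for $\epsilon$ small, and the claim follows with $\sigma=\varsigma\beta/2$.

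There is no genuine obstacle in this argument; it is essentially bookkeeping of scaling exponents. The two points deserving attention are: first, the ellipticity ratio $\Lambda$, hence $\beta$ and the constant produced by \cite{GilbardTrudinger}*{Theorem~8.22}, must be uniform in $\epsilon$ --- this is exactly what the standing hypothesis $\inf_{\mathcal{U}}\depth>0$ secures, and it explains why $\mathcal{U}$ must stay away from the parts of $\boundary\domain$ where $\depth$ may degenerate; and second, the elementary fact that a fixed power of $\log\tfrac1\epsilon$ is absorbed by any positive power of $\epsilon^{-1}$, which is precisely what allows the merely logarithmically small distance to the boundary to coexist with a polynomially small oscillation. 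In the applications of this proposition the hypothesis on $y$ will be checked by means of the repulsion estimate \cref{repulsionLog}.
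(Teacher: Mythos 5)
Your argument is correct and is exactly what the paper intends: the proposition is stated as a direct consequence of \cite{GilbardTrudinger}*{Theorem~8.22}, and your derivation is just the scaling bookkeeping for that interior H\"older estimate, with uniform ellipticity supplied by $\inf_{\mathcal{U}}\depth>0$ and the logarithmic factor $\big(\log\frac{1}{\epsilon}\big)^{\gamma\beta}$ absorbed into a smaller power of $\epsilon$. You also correctly read the hypothesis on $y$ as $\distance(y,\boundary\domain)\geq\big(\log\frac{1}{\epsilon}\big)^{-\gamma}$ (the exponent in the printed statement is evidently a sign typo, since otherwise no such $y$ exists in a bounded domain).
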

Since we already know that the maximizing vortex pair $\vortex_\epsilon$ concentrates on a point
of maximal depth, we may apply \cref{slowOscillation} to control the oscillation of each $\rectifykernel_y$,
on the vortex core. Also, \cref{slowOscillation} for $b\equiv 1$ yields an estimate for the regular part of the
Green's function $\regulargreenlaplace$. Finally, we also recall that the function (see~\cite{myArticle})
	\[ y\in\closure{\domain}\mapsto \norm{\rectifykernel(\cdot,y)}{L^\infty} \]
is continuous on $\closure{\domain}$, since $(\domain,\depth)$ is a continuous lake by assumption.
The vortex $\positivepart{\vortex_\epsilon}$
and $\negativepart{\vortex_\epsilon}$ both concentrate with diameter of a priori order $\epsilon^\varsigma$,
while their distance from each others and from the boundary $\boundary\domain$
is of not smaller than some order $\big(\log\frac{1}{\epsilon}\big)^\gamma$.
From these a priori results and a direct application of
\cref{slowOscillation}, one can easily deduce \cref{smallOsc}. We skip the proof.

\subsection{Scaling process and energy convergence}

We fix a family of points $\big\{X_\epsilon\in\{\positivepart{\vortex_\epsilon}>0\}:\epsilon>0\big\}$, and we recall that
	\[ \{\positivepart{\vortex_\epsilon}>0\} \subseteq B(X_\epsilon,\epsilon^\varsigma) ,\]
for some $\varsigma>0$ independent of $\epsilon>0$.
Our analysis is made for the positive part of the vortex pair,
but similar results hold for the negative part as well.

Given a positive and measurable function $f:\domain\to\reals^+$ with finite $\mu$-integral, we define its
scaled version as the function
	\[ \scale{f} : \plane\to\reals^+ : \scale{f}(x) = \frac{\epsilon}{\int_\domain f\mumeasure}\ f\big(\epsilon\,x + X_\epsilon\big) .\]
The rescaled version of the measure $\mu$ is $\scalemumeasure$ defined as
	\[ \int_{\plane}g(x)\scalemumeasure(x) = \frac{1}{\epsilon}\int_\domain g\big(\epsilon^{-1}\,(x - X_\epsilon)\big)\mumeasure(x) .\]
By construction, we have
	\[ \int_{\plane}\scale{f}\scalemumeasure = 1 ,\]
while
	\[ \norm{\scale{f}}{L^p_{\scalemumeasure}} = \frac{\norm{f}{L^p_{\mu}}\epsilon^{2\big(1-\frac{1}{p}\big)}}{\int_\domain f\mumeasure} .\]
Observe that the scaling process depends on $\epsilon>0$ and on $X_\epsilon$,
although we do not explicitly mention this dependence
in our notations.
\begin{lemma}
Let $f_\epsilon=\scale{\positivepart{\vortex_\epsilon}}$ and $f^\star_\epsilon=\scale{\big( \symmetrizearoundpoint{X_\epsilon}{\positivepart{\vortex_\epsilon}} \big)}$. We have
	\begin{multline*} \lim_{\epsilon\to 0} \Bigg\{
		\iint_{\plane\times\plane}\log\frac{1}{|x-y|}
			\,f_\epsilon^\star(x)f_\epsilon^\star(y)
			\dif(x,y)
	\\- \iint_{\plane\times\plane}\log\frac{1}{|x-y|}
			\,f_\epsilon(x)f_\epsilon(y)
			\dif(x,y)
	\Bigg\} = 0 .
	\end{multline*}
\end{lemma}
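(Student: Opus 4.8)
The plan is to undo the rescaling, recognise the bracketed quantity as (a positive multiple of) the Riesz--Sobolev \emph{deficit} of $\positivepart{\vortex_\epsilon}$ relative to its $\mu$-symmetrisation around $X_\epsilon$, and then estimate that deficit from both sides. Write $\vortex=\positivepart{\vortex_\epsilon}$ and $\vortex^\star=\symmetrizearoundpoint{X_\epsilon}{\positivepart{\vortex_\epsilon}}$, and perform the change of variables $u=\epsilon x+X_\epsilon$ in both double integrals. Splitting $\log\frac1{|x-y|}=\log\frac{\diameter(\domain)}{|\epsilon x-\epsilon y|}-\log\frac{\diameter(\domain)}{\epsilon}$ and using that $\vortex$ and $\vortex^\star$ are $\mu$-rearrangements with $\int_\domain\vortex\mumeasure=\int_\domain\vortex^\star\mumeasure=\tau\vortexstrength_\epsilon$, the $\log\frac{\diameter(\domain)}{\epsilon}$-contribution cancels, and the difference $\iint_{\plane\times\plane}\log\frac1{|x-y|}f^\star_\epsilon f^\star_\epsilon\dif(x,y)-\iint_{\plane\times\plane}\log\frac1{|x-y|}f_\epsilon f_\epsilon\dif(x,y)$ equals, up to an error tending to $0$, a positive $\tau$-dependent multiple of $\vortexstrength_\epsilon^{-2}D_\epsilon$, where
\[
D_\epsilon=\iint_{\domain\times\domain}\log\frac{\diameter(\domain)}{|u-v|}\,\vortex^\star(u)\vortex^\star(v)\,\mumeasure\mumeasure(u,v)-\iint_{\domain\times\domain}\log\frac{\diameter(\domain)}{|u-v|}\,\vortex(u)\vortex(v)\,\mumeasure\mumeasure(u,v).
\]
(The discrepancies between the weight produced by the change of variables and the measure $\mu$ over the cores are $O\big(\omega_{\depth}(\epsilon^\varsigma)\log\tfrac1\epsilon\big)\vortexstrength_\epsilon^2$, hence $o(\vortexstrength_\epsilon^2)$, by the diameter bound of \cref{strongConcentration} together with $\omega_{\depth}(\epsilon^\varsigma)\log\tfrac1\epsilon\to0$.) It therefore suffices to show $D_\epsilon=o(\vortexstrength_\epsilon^2)$.

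The lower bound $D_\epsilon\geq-o(\vortexstrength_\epsilon^2)$ is exactly \cref{RieszSobolev} applied with $X=X^\star_\epsilon=X_\epsilon$. This choice is admissible for all small $\epsilon$: since $X_\epsilon\in\{\positivepart{\vortex_\epsilon}>0\}$, \cref{repulsionLog} gives $\distance(X_\epsilon,\boundary\domain)\gtrsim(\log\tfrac1\epsilon)^{-\gamma_1}$, so for a fixed small $R>0$ one eventually has $\mu(B(X_\epsilon,R))\geq\epsilon^2$ and $\inf_{B(X_\epsilon,R)}\depth>0$. With $X=X^\star_\epsilon$ the term $\log\sqrt{\depth(X)/\depth(X^\star_\epsilon)}$ vanishes, while the remaining error $C\big(\omega_{\depth}(\diameter(\{\vortex^\star>0\}))+\omega_{\depth}(\diameter(\{\vortex>0\}))\big)\vortexstrength_\epsilon^2$ is $o(\vortexstrength_\epsilon^2)$ since both diameters are $\lesssim\epsilon^\varsigma$ by \cref{strongConcentration}.

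For the upper bound $D_\epsilon\leq o(\vortexstrength_\epsilon^2)$ I would test the energy against $\vortex^\star_\epsilon=\symmetrizearoundpoint{X_\epsilon}{\positivepart{\vortex_\epsilon}}-\negativepart{\vortex_\epsilon}$, which (for $\epsilon$ small enough that the two cores are disjoint, by \cref{repulsionLog}) is a $\mu$-rearrangement of $\vortex_\epsilon$, so that $\energy_\epsilon(\vortex^\star_\epsilon)\leq\energy_\epsilon(\vortex_\epsilon)$. Expanding both energies through \eqref{integralRepresentation} exactly as in the proof of \cref{accurateRule}, the negative-part self-interaction $\int_\domain\negativepart{\vortex_\epsilon}\negativefirstorderflow{\vortex_\epsilon}\mumeasure$ is unchanged, so $\energy_\epsilon(\vortex^\star_\epsilon)-\energy_\epsilon(\vortex_\epsilon)$ equals a positive multiple of $D_\epsilon$ (after reducing the $\depth(x)$-weight in the positive-part self-interaction to $\depth(X_\epsilon)$, at the cost of an $o(\vortexstrength_\epsilon^2)$ term as above) plus the variations of $\iint F\,\vortex_\epsilon\vortex_\epsilon\,\mumeasure\mumeasure$ and of $\iint G\,\positivepart{\vortex_\epsilon}\negativepart{\vortex_\epsilon}\,\mumeasure\mumeasure$. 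Since $\symmetrizearoundpoint{X_\epsilon}{\positivepart{\vortex_\epsilon}}$ has the same $\mu$-mass as $\positivepart{\vortex_\epsilon}$ and is concentrated around the \emph{same} point $X_\epsilon$, and $F,G$ oscillate by $o(1)$ over the (small, interior, mutually separated) cores — by \cref{slowOscillation}, the separation and boundary-distance estimates of \cref{repulsionLog}, and the continuity of $y\mapsto\norm{\rectifykernel(\cdot,y)}{L^\infty}$ — both variations are $o(\vortexstrength_\epsilon^2)$; this is the quantitative content established in the proof of \cref{smallOsc}. Hence $D_\epsilon\leq o(\vortexstrength_\epsilon^2)$, and combining with the lower bound gives $D_\epsilon=o(\vortexstrength_\epsilon^2)$, as required.

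The main obstacle I anticipate is this last step: one must isolate, inside the energy expansion, the positive-part self-interaction — which is what carries $D_\epsilon$ — from the $F$- and $G$-contributions, and show the latter are genuinely $o(\vortexstrength_\epsilon^2)$, which requires using together the slow-oscillation estimate \cref{slowOscillation}, the separation estimates of \cref{repulsionLog}, and the fact that the $\sharp$-symmetrisation preserves both the $\mu$-mass and the location of the positive core. A secondary technical point, used in the first paragraph, is the uniform-in-$\epsilon$ control of the difference between the Lebesgue and $\mu$ weights over the shrinking cores, for which the Hölder modulus of $\depth$ and the bound $\diameter\lesssim\epsilon^\varsigma$ from \cref{strongConcentration} are exactly what is needed.
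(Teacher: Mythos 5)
Your argument is correct and is essentially the paper's own proof: both reduce the claim, via the change of variables and the Hölder control of $\depth$ on the shrinking cores, to showing that the $\mu$-weighted logarithmic self-interaction of $\positivepart{\vortex_\epsilon}$ and of $\symmetrizearoundpoint{X_\epsilon}{\positivepart{\vortex_\epsilon}}$ differ by $o(\vortexstrength_\epsilon^2)$, with the lower bound coming from \cref{RieszSobolev} and the upper bound from $\energy_\epsilon(\vortex^\star_\epsilon)\leq\energy_\epsilon(\vortex_\epsilon)$ combined with \cref{smallOsc}. The only difference is expository — you unfold into two explicit one-sided bounds what the paper compresses into a single citation of \cref{smallOsc}, the maximization principle and \cref{RieszSobolev} — and your handling of the weight-swap error via $\omega_{\depth}(\epsilon^\varsigma)\log\frac{1}{\epsilon}\to 0$ matches the paper's final step.
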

\begin{proof}
According to \cref{smallOsc} together with the energy maximization principle and the 
Riesz-Sobolev rearrangement inequality, \cref{RieszSobolev}, one directly has the asymptotic behavior
	\begin{multline*} \lim_{\epsilon\to 0} \Bigg\{
		\iint_{\domain\times\domain}\log\frac{\diameter(\domain)}{|x-y|}
			\,\symmetrizearoundpoint{X_\epsilon}{\positivepart{\vortex_\epsilon}}(x)\symmetrizearoundpoint{X_\epsilon}{\positivepart{\vortex_\epsilon}}(y)
			\mumeasure\mumeasure(x,y)
	\\- \iint_{\domain\times\domain}\log\frac{\diameter(\domain)}{|x-y|}
			\,\positivepart{\vortex_\epsilon}(x)\positivepart{\vortex_\epsilon}(y)
			\mumeasure\mumeasure(x,y)
	\Bigg\} = 0 .
	\end{multline*}
Since $\positivepart{\vortex_\epsilon}$ and $\symmetrizearoundpoint{X_\epsilon}{\positivepart{\vortex_\epsilon}}$
are $\mu$-rearrangements of each others, we also get
	\begin{multline*} \lim_{\epsilon\to 0} \Bigg\{
		\iint_{\domain\times\domain}\log\frac{\epsilon}{|x-y|}
			\,\symmetrizearoundpoint{X_\epsilon}{\positivepart{\vortex_\epsilon}}(x)\symmetrizearoundpoint{X_\epsilon}{\positivepart{\vortex_\epsilon}}(y)
			\mumeasure\mumeasure(x,y)
	\\- \iint_{\domain\times\domain}\log\frac{\epsilon}{|x-y|}
			\,\positivepart{\vortex_\epsilon}(x)\positivepart{\vortex_\epsilon}(y)
			\mumeasure\mumeasure(x,y)
	\Bigg\} = 0 .
	\end{multline*}
The scaling process yields to
	\begin{multline*} \lim_{\epsilon\to 0} \Bigg\{
		\iint_{\plane\times\plane}\log\frac{1}{|x-y|}
			\,f_\epsilon^\star(x)f_\epsilon^\star(y)
			\scalemumeasure\scalemumeasure(x,y)
	\\- \iint_{\plane\times\plane}\log\frac{1}{|x-y|}
			\,f_\epsilon(x)f_\epsilon(y)
			\scalemumeasure\scalemumeasure(x,y)
	\Bigg\} = 0 .
	\end{multline*}
Now observe that the diameter of $\{f_\epsilon>0\}$ is of order $\epsilon^{\varsigma-1}$,
while the diameter of $\{f^\star_\epsilon>0\}$ is of order $1$. Hence, since $\depth\in C^{0,\alpha}(\closure{\domain})$
is uniformly bounded on $\{\vortex_\epsilon>0\}$, independently of $\epsilon>0$ sufficiently small,
we obtain
	\begin{multline*} \lim_{\epsilon\to 0} \Bigg\{
		\iint_{\plane\times\plane}\log\frac{1}{|x-y|}
			\,f_\epsilon^\star(x)f_\epsilon^\star(y)
			\dif(x,y)
	\\- \iint_{\plane\times\plane}\log\frac{1}{|x-y|}
			\,f_\epsilon(x)f_\epsilon(y)
			\dif(x,y)
	\Bigg\} = 0 .
	\end{multline*}
\end{proof}

\subsection{Criterion for convergence}

We require the following sufficient criterion to prove convergence in measure of a sequence of functions.
The distribution of a positive and measurable function $f:\domain\to\reals$ is defined by
	\[ \distribution{f} : \reals^+\to\reals^+\cup\{+\infty\} : \distribution{f}(t) = \mu\big(\{f>t\}\big) .\]
\begin{lemma}[Criterion for convergence in measure]\label{criterionconvergence}
Let $(f_n)_{n\in\integers}$ be a family of measurable, positive and real valued functions,
and $f$ be a measurable positive and real valued function. We assume that
	\[ \distribution{f}(0) < +\infty \qquad\text{and}\qquad\ \lim_{M\to+\infty}\distribution{f}(M) = 0 .\]
If the sequence $\big(\distribution{f_n}\big)_{n\in\integers}$ converges pointwisely to $\distribution{f}$,
and if for $\mu$-almost-all every $t\geq 0$, we have
	\[ \lim_{n\to\infty}\mu\big( \{f_n>t\}\Delta \{f>t\} \big) = 0 ,\]
then $(f_n)_{n\in\integers}$ converges in $\mu$-measure to $f$.
\end{lemma}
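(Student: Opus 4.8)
The plan is to pass to the truncations $f_n\wedge M:=\min\{f_n,M\}$ and $f\wedge M:=\min\{f,M\}$, which are $\mu$-integrable with integral controlled by the distribution functions, and to exploit a layer-cake representation. For a fixed $x\in\domain$ the sets $\{t\ge0:f_n(x)>t\}$ and $\{t\ge0:f(x)>t\}$ are nested intervals, so $t\mapsto\chi_{\{f_n>t\}}(x)-\chi_{\{f>t\}}(x)$ keeps a constant sign and
\[ \big|(f_n\wedge M)(x)-(f\wedge M)(x)\big|=\int_0^M\big|\chi_{\{f_n>t\}}(x)-\chi_{\{f>t\}}(x)\big|\dif t . \]
Integrating over $\domain$ against $\mu$ and invoking Tonelli's theorem (all integrands are nonnegative) yields the identity
\[ \int_\domain\big|f_n\wedge M-f\wedge M\big|\mumeasure=\int_0^M\mu\big(\{f_n>t\}\Delta\{f>t\}\big)\dif t . \]

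First I would prove that the right-hand side tends to $0$ as $n\to\infty$ for each fixed $M$, by dominated convergence on the finite interval $[0,M]$. The integrand converges to $0$ for almost every $t\ge0$ by hypothesis, and it admits a uniform bound, since $\mu(\{f_n>t\}\Delta\{f>t\})\le\distribution{f_n}(t)+\distribution{f}(t)\le\distribution{f_n}(0)+\distribution{f}(0)$ and $\distribution{f_n}(0)\to\distribution{f}(0)<+\infty$, so the last quantity is bounded by a constant for $n$ large. Hence $\int_\domain|f_n\wedge M-f\wedge M|\mumeasure\to0$ as $n\to\infty$.

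Then I would conclude by a cutting argument. Fix $\eta>0$ and $M>0$. Whenever $f_n(x)\le M$ and $f(x)\le M$ one has $(f_n\wedge M)(x)=f_n(x)$ and $(f\wedge M)(x)=f(x)$, so
\[ \{|f_n-f|>\eta\}\subseteq\{|f_n\wedge M-f\wedge M|>\eta\}\cup\{f>M\}\cup\{f_n>M\} . \]
By Chebyshev's inequality and the previous step, $\mu(\{|f_n\wedge M-f\wedge M|>\eta\})\le\eta^{-1}\int_\domain|f_n\wedge M-f\wedge M|\mumeasure\to0$; moreover $\mu(\{f>M\})=\distribution{f}(M)$, and the pointwise convergence $\distribution{f_n}(M)\to\distribution{f}(M)$ gives $\limsup_{n\to\infty}\mu(\{f_n>M\})=\distribution{f}(M)$. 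Therefore $\limsup_{n\to\infty}\mu(\{|f_n-f|>\eta\})\le2\,\distribution{f}(M)$ for every $M>0$, and letting $M\to+\infty$ while using $\lim_{M\to+\infty}\distribution{f}(M)=0$ we obtain $\lim_{n\to\infty}\mu(\{|f_n-f|>\eta\})=0$, i.e. $f_n\to f$ in $\mu$-measure.

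The only genuine difficulty is that $f$ and the $f_n$ need not be $\mu$-integrable, which forbids integrating the Cavalieri identity for $|f_n-f|$ directly; truncating at height $M$ removes this issue, the three hypotheses on $\distribution{f}$, on $\distribution{f_n}$ and on the symmetric differences controlling, respectively, the tail $\{f>M\}$, the tail $\{f_n>M\}$, and the bulk contribution on $[0,M]$.
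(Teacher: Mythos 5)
Your proof is correct and follows essentially the same route as the paper's: both rest on the layer--cake bound $\mu(\{|f_n-f|\ge s\})\lesssim s^{-1}\int_0^M\mu(\{f_n>t\}\Delta\{f>t\})\,\du t+\mu(\{f>M\})+\mu(\{f_n>M\})$ obtained via Tonelli, followed by dominated convergence on $[0,M]$ and the tail condition on $\distribution{f}$. The only cosmetic difference is that you factor the argument through the truncations $f\wedge M$ and Chebyshev's inequality, whereas the paper derives the same inequality by a direct pointwise computation with a shift $s/2$; the content is identical.
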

This result is standard, but we have not found it in the literature.
\begin{proof}
Let us fix $s>0$. If there exists $x\in\{f_n>f+s\}$, choose $M=M(x,s)>0$ such that $f(x)\leq M$.
For all $t>0$, we define
	\[ A_t = \{f_n>t+s/2\} \setminus \{f>t\} \]
and we compute
	\begin{align*}
		\int^M_0\chi_{A_t}(x)\dif t
			&= \int^{f(x)}_0\chi_{A_t}(x)\dif t + \int^{f_n(x)-\frac{s}{2}}_{f(x)}\chi{A_t}(x)\dif t + \int^M_{f_n(x)-\frac{s}{2}}\chi_{A_t}(x)\dif t
			\\&= \int^{f_n(x)-\frac{s}{2}}_{f(x)}1\dif t = f_n(x) - \frac{s}{2} - f(x)
			\\&\geq \frac{s}{2}\chi_{\{f_n\geq f+s\}}(x) ,
	\end{align*}
and therefore
	\[ \chi_{\{f_n\geq f+s\}}(x) \leq \frac{2}{s}\int^M_0\chi_{\{f_n>t+s/2\}\setminus\{f>t\}}(x)\dif t + \chi_{\{f>M\}}(x) .\]
The last inequality extends for all $x\in\domain$ and for all $M>0$.
It also holds if $\{f_n>f+s\}$ is the empty set. By symmetry, we prove similarly the for all $s>0$, for all $M>0$ and for all $x\in\domain$:
	\[ \chi_{\{f\geq f_n+s\}}(x) \leq \frac{2}{s}\int^M_0\chi_{\{f>t+s/2\}\setminus\{f_n>t\}}(x)\dif t + \chi_{\{f_n>M\}}(x) .\]
Since we have
	\[ \{f_n>t+s/2\}\setminus\{f>t\} \ \cup\ \{f>t+s/2\}\setminus\{f_n>t\} \subseteq \{ f>t\} \Delta \{f_n>t\} ,\]
we obtain
	\[ \chi_{\{|f_n-f|\geq s\}}(x) \leq \frac{2}{s}\int^M_0\chi_{\{ f>t\} \Delta \{f_n>t\}}(x)\dif t + \chi_{\{f>M\}}(x) + \chi_{\{f_n>M\}}(x) .\]
In particular, one concludes by Tonelli's theorem for all $s>0$ and for all $M>0$, we have
	\[ \mu\big(\{|f_n-f|\geq s\}\big) \leq \frac{2}{s}\int^M_0\mu\big(\{ f>t\} \Delta \{f_n>t\}\big)\dif t
		+ \mu\big(\{f>M\}\big) + \mu\big( \{f_n>M\} \big) .\]
As a corollary of our assumptions, we have
	\[ \limsup_{n\to +\infty}\mu\big(\{f_n>0\}\big) < +\infty ,\]
and thus one may apply the Lebesgue's dominated convergence theorem and the condition $\lim\limits_{M\to+\infty}\distribution{f}(M)=0$
to obtain the conclusion.
\end{proof}
\begin{remark}
The condition is actually an equivalent condition, but we are not going to use the
counterpart in this text.
\end{remark}

\subsection{Asymptotic shape}

\begin{lemma}
Let $f_\epsilon=\scale{\positivepart{\vortex_\epsilon}}$ and let us denote by
$f^\Delta_\epsilon$ the symmetric radially nonincreasing
Lebesgue-rearrangement of $f_\epsilon$ centered on $0$. We have
	\begin{multline*} \liminf_{\epsilon\to 0} \Bigg\{
		\iint_{\plane\times\plane}\log\frac{1}{|x-y|}
			\,f^\Delta_\epsilon(x)f^\Delta_\epsilon(y) \dif(x,y)
	\\- \iint_{\plane\times\plane}\log\frac{1}{|x-y|}
			\,f_\epsilon(x)f_\epsilon(y) \dif(x,y)
	\Bigg\} = 0 .
	\end{multline*}
\end{lemma}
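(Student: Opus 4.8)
The plan is to reduce the statement to the energy–convergence lemma proved just above, which gives $\lim_{\epsilon\to0}\big\{J(f^\star_\epsilon)-J(f_\epsilon)\big\}=0$, where I write $J(g)=\iint_{\plane\times\plane}\log\frac{1}{|x-y|}g(x)g(y)\dif(x,y)$ and $f^\star_\epsilon=\scale{\big(\symmetrizearoundpoint{X_\epsilon}{\positivepart{\vortex_\epsilon}}\big)}$. Thus it suffices to prove that $J(f^\Delta_\epsilon)-J(f^\star_\epsilon)\to0$ and to add the two limits; the resulting $\lim_{\epsilon\to0}\big\{J(f^\Delta_\epsilon)-J(f_\epsilon)\big\}=0$ is stronger than the $\liminf$ claimed in the statement.

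First I would record that $f^\star_\epsilon$ and $f^\Delta_\epsilon$ are both radially symmetric nonincreasing about $0$: for $f^\Delta_\epsilon$ by definition, and for $f^\star_\epsilon$ because, by \cref{symmetrizearoundpoint}, the superlevel sets of $\symmetrizearoundpoint{X_\epsilon}{\positivepart{\vortex_\epsilon}}$ are balls of $\domain$ centered at $X_\epsilon$ of Euclidean radius $O(\epsilon)$ — their $\mu$-measure is at most $\epsilon^2$ and $\depth$ is bounded below near $X_\epsilon$ — hence, by \cref{repulsionLog}, not truncated by $\boundary\domain$, and they become genuine balls centered at $0$ under the dilation $x\mapsto u=\epsilon^{-1}(x-X_\epsilon)$. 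Writing $\{f^\star_\epsilon>t\}=B(0,\rho^\star_\epsilon(t))$ and $\{f^\Delta_\epsilon>t\}=B(0,\rho^\Delta_\epsilon(t))$, the defining relations are $\scalemumeasure\big(B(0,\rho^\star_\epsilon(t))\big)=\scalemumeasure(\{f_\epsilon>t\})$, since $\scale{}$ turns a $\mu$-rearrangement into a $\scalemumeasure$-rearrangement, and $\lebesguemeasure\big(B(0,\rho^\Delta_\epsilon(t))\big)=\lebesguemeasure(\{f_\epsilon>t\})$.

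The crux is that $\scalemumeasure$ and Lebesgue measure are comparable with ratio $1+o(1)$, uniformly over all the sets at play. Indeed the $\scalemumeasure$-density with respect to Lebesgue measure is (proportional to) $u\mapsto\depth(\epsilon u+X_\epsilon)$, and on $\{f_\epsilon>0\}$ — which maps into $\{\positivepart{\vortex_\epsilon}>0\}$, of diameter at most $\epsilon^\varsigma$ by \cref{strongConcentration} — and on any ball $B(0,R)$ with $R=O(1)$, this density equals $\depth(X_\epsilon)+O(\omega_\depth(\epsilon^\varsigma))$, while $\depth(X_\epsilon)\to\sup_\domain\depth>0$ by \cref{weakConcentration}. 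Equating the two displayed relations, the common factor cancels and one obtains $\rho^\star_\epsilon(t)=\lambda_\epsilon\,\rho^\Delta_\epsilon(t)$ for a single dilation factor $\lambda_\epsilon\to1$ independent of $t$ (in particular $\rho^\star_\epsilon(0^+)=O(1)$ too); equivalently $f^\star_\epsilon(x)=f^\Delta_\epsilon(\lambda_\epsilon^{-1}x)$. A change of variables then gives $J(f^\star_\epsilon)=\lambda_\epsilon^{4}J(f^\Delta_\epsilon)-\lambda_\epsilon^{4}(\log\lambda_\epsilon)\big(\int_{\plane}f^\Delta_\epsilon\,\dif u\big)^{2}$.

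To conclude I would check that $J(f^\Delta_\epsilon)$ and $\int_{\plane}f^\Delta_\epsilon\,\dif u$ remain bounded as $\epsilon\to0$, so that $\lambda_\epsilon\to1$ forces $J(f^\star_\epsilon)-J(f^\Delta_\epsilon)\to0$, which finishes the proof. Since $f^\Delta_\epsilon$ is a Lebesgue rearrangement of $f_\epsilon$, its support is a ball of Lebesgue measure $\lebesguemeasure(\{f_\epsilon>0\})=\epsilon^{-2}\lebesguemeasure(\{\positivepart{\vortex_\epsilon}>0\})=O(1)$, and $\norm{f^\Delta_\epsilon}{L^1(\lebesguemeasure)}=\norm{f_\epsilon}{L^1(\lebesguemeasure)}\lesssim\int_{\plane}f_\epsilon\scalemumeasure=1$, $\norm{f^\Delta_\epsilon}{L^p(\lebesguemeasure)}=\norm{f_\epsilon}{L^p(\lebesguemeasure)}\lesssim\norm{f_\epsilon}{L^p_{\scalemumeasure}}=O(1)$ — the last being precisely the rescaled form of the constraint~\eqref{distribConstraint}, which is the only place it is needed. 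As $\log\frac{1}{|\cdot|}\in L^{p'}_{\mathrm{loc}}$ with $p'=p/(p-1)$, Hölder's inequality then bounds $|J(f^\Delta_\epsilon)|$ and $\big(\int_{\plane}f^\Delta_\epsilon\,\dif u\big)^{2}$ uniformly in $\epsilon$. The main obstacle is precisely this uniform comparison between the $\mu$-symmetrization and the Lebesgue symmetrization, and the verification that the resulting $1+o(1)$ discrepancy is not amplified by the logarithmic singularity of the kernel; here the concentration on a point of maximal depth, the diameter estimate of \cref{strongConcentration}, and the uniform $L^p$ bound \eqref{distribConstraint} all have to be used.
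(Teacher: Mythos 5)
Your overall reduction --- invoke the preceding lemma to replace $f_\epsilon$ by $f^\star_\epsilon=\scale{\big(\symmetrizearoundpoint{X_\epsilon}{\positivepart{\vortex_\epsilon}}\big)}$, then compare the two radial profiles $f^\star_\epsilon$ and $f^\Delta_\epsilon$ --- is exactly the reduction the paper makes, but your second comparison contains a genuine gap. You assert that the superlevel radii satisfy $\rho^\star_\epsilon(t)=\lambda_\epsilon\,\rho^\Delta_\epsilon(t)$ for a \emph{single} factor $\lambda_\epsilon$ independent of $t$, i.e.\ that $f^\star_\epsilon$ is an exact dilate of $f^\Delta_\epsilon$. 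This is false unless $\depth$ is constant: the superlevel balls of $\symmetrizearoundpoint{X_\epsilon}{\positivepart{\vortex_\epsilon}}$ are defined by $\mu\big(B(X_\epsilon,r(t))\big)=\mu\big(\{\positivepart{\vortex_\epsilon}>t\}\big)$, and since the density $\depth$ is not constant, $\mu\big(B(X_\epsilon,r)\big)$ is not proportional to $r^2$; the two sides carry errors of size $\omega_\depth(r(t))$ and $\omega_\depth(\epsilon^\varsigma)$ respectively, which depend on $t$ and do not cancel. What is true is that $\rho^\star_\epsilon(t)/\rho^\Delta_\epsilon(t)=1+o(1)$ \emph{uniformly} in $t$, not that the ratio is constant in $t$; consequently the change-of-variables identity $J(f^\star_\epsilon)=\lambda_\epsilon^4 J(f^\Delta_\epsilon)-\lambda_\epsilon^4(\log\lambda_\epsilon)\big(\int_{\plane}f^\Delta_\epsilon\big)^2$, on which your conclusion entirely rests, is not available.

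The gap is repairable with estimates you already set up. Since both functions are radial and nonincreasing about $0$, the set $\{f^\star_\epsilon>t\}\,\Delta\,\{f^\Delta_\epsilon>t\}$ is an annulus of Lebesgue measure $\pi\big|\rho^\star_\epsilon(t)^2-\rho^\Delta_\epsilon(t)^2\big|=o(1)\,\lebesguemeasure\big(\{f^\Delta_\epsilon>t\}\big)$ uniformly in $t$; integrating in $t$ (Cavalieri) gives $\norm{f^\star_\epsilon-f^\Delta_\epsilon}{L^1(\lebesguemeasure)}=o(1)$, and then, writing the difference of quadratic forms as $\iint\log\frac{1}{|x-y|}\,(f^\star_\epsilon-f^\Delta_\epsilon)(x)\,(f^\star_\epsilon+f^\Delta_\epsilon)(y)\dif(x,y)$, your uniform $L^p$ bound together with $\log\frac{1}{|\cdot|}\in L^{p'}_{\mathrm{loc}}$ and the common support in a fixed ball bounds the inner integral uniformly in $x$ and yields $J(f^\star_\epsilon)-J(f^\Delta_\epsilon)\to0$. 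With that repair your argument closes and in fact produces a full limit, stronger than the stated $\liminf$. For comparison, the paper handles this step non-quantitatively: it shows, via the convergence-in-measure criterion (\cref{criterionconvergence}) and Helly's selection principle, that $\{f^\star_\epsilon\}$ and $\{f^\Delta_\epsilon\}$ share their accumulation points in measure, and passes to the limit in the energy through weak $L^p$ convergence combined with Rellich--Kondrashov compactness of the Newtonian potentials in $W^{1,p}$, which is why only a $\liminf$ (i.e.\ convergence along a subsequence) is claimed there.
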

\begin{proof}
Let $f^\star_\epsilon=\scale{\big( \symmetrizearoundpoint{X_\epsilon}{\positivepart{\vortex_\epsilon}} \big)}$.
For all $\epsilon>0$ sufficiently small, the functions $f^\star_\epsilon$ and $f^\Delta_\epsilon$
are both supported in some ball $B(0,R)$ with $R>0$ independent of $\epsilon>0$.
Furthermore, they belong to $L^p\big( B(0,R),\lebesguemeasure\big)$.
By scaling process, it is straightforward to see that
	\[ \lim_{\epsilon\to 0}\bigg\{ \lebesguemeasure\Big(\{x\in B(0,R):f^\star_\epsilon(x)>t\}\Big)
		- \lebesguemeasure\Big(\{x\in B(0,R):f^\Delta_\epsilon(x)>t\}\Big) \bigg\} = 0 . \]
According to \cref{criterionconvergence}, we conclude that a function $g$ is an accumulation point in the sense of
convergence in Lebesgue measure for the family $\{f_\epsilon^\Delta:\epsilon>0\}$ if, and only if, it is also
an accumulation point in the sense of convergence in Lebesgue measure
for the family $\{f_\epsilon^\star:\epsilon>0\}$.

According to Helly's selection principle, the family $\{f^\Delta_\epsilon:\epsilon>0\}$
admits at least one accumulation point (in the sense of convergence in Lebesgue measure)
$g\in L^p\big(B(0,R),\lebesguemeasure\big)$
that is symmetric radially nonincreasing. Up to taking a new subsequence, and because the set
$\{f^\Delta_\epsilon:\epsilon>0\}$ is bounded in $L^p\big(B(0,R),\lebesguemeasure\big)$, one may
assume that the convergence occurs weakly in $L^p\big(B(0,R),\lebesguemeasure\big)$.
To avoid heavy notations, we assume that
	\[ f^\Delta_\epsilon\to g\ \text{weakly and in convergence in Lebesgue measure as}\ \epsilon\to 0 .\]
We deduce that
	\[ f^\star_\epsilon\to g\ \text{weakly and in convergence in Lebesgue measure as}\ \epsilon\to 0 .\]
Let us write, for all $u\in L^p(\plane,\lebesguemeasure)$, 
	\[ -\log\ast\ u : \plane\to\reals : -\log\ast\ u(y) = \int_{\plane}u(x)\log\frac{1}{|x-y|}\dif x .\]
We have $-\log\ast\ u\in W^{1,p}\big(B(0,R)\big)$ whenever $u\in L^p(\plane,\lebesguemeasure)$~\cite{GilbardTrudinger}*{Theorem~9.9}.
Relying on Rellich-Kondrashov theorem, and up to taking a subsequence,
we may abuse of notations and assume that
	\[ -\log\ast\ f^\Delta_\epsilon \to -\log\ast\ g
		\ \text{weakly in}\ W^{1,p}(B(0,R))\ \text{and strongly in}\ L^p\big(B(0,R),\lebesguemeasure\big) ,\]
and
	\[ -\log\ast\ f^\star_\epsilon \to -\log\ast\ g
		\ \text{weakly in}\ W^{1,p}(B(0,R))\ \text{and strongly in}\ L^p\big(B(0,R),\lebesguemeasure\big) .\]
The conclusion now follows directly by a strong-weak convergence argument.
\end{proof}
Using a theory developed by Buchard~\&~Guo~\cite{BurchardGuo}*{Lemma3.2}, one may now assert the following optimal shape theorem:
\begin{theorem}
Every accumulation point of the family $\big\{\scale{\positivepart{\vortex_\epsilon}}:\epsilon>0\big\}$
in the sense of Lebesgue measure in $\plane$ may be written as $f^\star\circ T$, where $T$ is a translation in $\plane$
and $f^\star$ is a radially nonincreasing function.
In particular, there exists $R>0$ independent of $\epsilon>0$, and a family $\{X_\epsilon\in\domain:\epsilon>0\}$, such that
	\[ \lim_{\epsilon\to 0}\bigg\{
		\frac{1}{\tau\vortexstrength_\epsilon}\int_{\domain\setminus B(X_\epsilon,R\epsilon)}\positivepart{\vortex_\epsilon}\mumeasure
	\bigg\} = 0 .\]
\end{theorem}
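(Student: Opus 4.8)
The plan is to recognise the rescaled positive part $f_\epsilon=\scale{\positivepart{\vortex_\epsilon}}$ as an \emph{asymptotic maximizer} of the logarithmic interaction energy over its own rearrangements, to feed this into the abstract shape theorem of Burchard~\&~Guo, and then to transport the resulting convergence in measure back to the original scale to read off the concentration estimate. The asymptotic optimality is already in hand: the lemma immediately preceding the theorem states precisely that, with $f^\Delta_\epsilon$ the symmetric radially nonincreasing Lebesgue rearrangement of $f_\epsilon$,
\begin{multline*}
\liminf_{\epsilon\to0}\Bigg\{\iint_{\plane\times\plane}\log\frac{1}{|x-y|}f^\Delta_\epsilon(x)f^\Delta_\epsilon(y)\dif(x,y)
\\-\iint_{\plane\times\plane}\log\frac{1}{|x-y|}f_\epsilon(x)f_\epsilon(y)\dif(x,y)\Bigg\}=0 .
\end{multline*}
Alongside this I would record the uniform bookkeeping that the abstract statement needs: $\int_\plane f_\epsilon\scalemumeasure=1$ by construction; $\{f_\epsilon\}$ is bounded in $L^p_{\scalemumeasure}$, $p>1$, by the scaling formula for the $L^p$ norm together with~\eqref{distribConstraint}; $\{f_\epsilon>0\}$ has Lebesgue measure bounded uniformly in $\epsilon$, because by \cref{strongConcentration} each $\positivepart{\vortex_\epsilon}$ lives in $B(X_\epsilon,\epsilon^\varsigma)$, a set of $\mu$-measure at most $\epsilon^2$; and, since everything concentrates at a point of maximal depth (\cref{weakConcentration}) where $\depth$ is continuous and positive, $\scalemumeasure$ restricted to $\{f_\epsilon>0\}$ coincides with Lebesgue measure up to a factor tending to $\sup_\domain\depth$. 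In particular $\{f_\epsilon\}$ is also bounded in $L^1(\plane,\lebesguemeasure)$ and in $L^p(\plane,\lebesguemeasure)$.

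I would then apply \cite{BurchardGuo}*{Lemma~3.2} to $(f_\epsilon)$ with these data: it produces translations $T_\epsilon$ of $\plane$, selected by a rule defined for each individual function, such that $\{f_\epsilon\circ T_\epsilon\}$ is precompact for convergence in Lebesgue measure, every accumulation point of it is radially nonincreasing, and no mass escapes to infinity. Hence any accumulation point $g$ of the original family $\{f_\epsilon\}$ is, after a further subsequence along which $T_\epsilon\to T$, of the form $g=f^\star\circ T$ with $T$ a translation and $f^\star\in L^p(\plane,\lebesguemeasure)$ radially nonincreasing; this is the first assertion of the theorem. Moreover, since $f^\star$ is radially nonincreasing and its positivity set has finite Lebesgue measure — inherited, through the convergence in measure, from the uniform bound on $\lebesguemeasure(\{f_\epsilon>0\})$ — that positivity set is a ball $B(0,\rho^\star)$ whose radius is dominated by a constant $R_0$ independent of the accumulation point.

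For the quantitative statement I would fix $R=R_0+1$ and, writing $\tilde X_\epsilon$ for the centre used to scale and $v_\epsilon$ for the translation vector of $T_\epsilon$, set $X_\epsilon=\tilde X_\epsilon+\epsilon\,v_\epsilon$ and $g_\epsilon=f_\epsilon\circ T_\epsilon$, which is exactly $\scale{\positivepart{\vortex_\epsilon}}$ recomputed with $X_\epsilon$ in place of $\tilde X_\epsilon$. Unwinding the scaling yields the identity $\int_{\plane\setminus B(0,R)}g_\epsilon\scalemumeasure=\frac{1}{\tau\vortexstrength_\epsilon}\int_{\domain\setminus B(X_\epsilon,R\epsilon)}\positivepart{\vortex_\epsilon}\mumeasure$, so it suffices to show $\int_{\plane\setminus B(0,R)}g_\epsilon\scalemumeasure\to0$. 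If not, then along some $\epsilon_n\to0$ this quantity stays bounded below; applying Helly's selection principle to the distribution functions $\distribution{g_{\epsilon_n}}$ together with the precompactness above (and \cref{criterionconvergence}), I would pass to a subsequence along which $g_{\epsilon_n}\to f^\star$ in measure, with $f^\star$ radially nonincreasing and vanishing outside $B(0,R_0)$. The uniform integrability coming from the bounded-measure supports and the $L^p$ bound, $p>1$, together with the absence of escaping mass, would upgrade this to convergence in $L^1(\plane,\lebesguemeasure)$; comparing $\scalemumeasure$ with $\lebesguemeasure$ on the supports then gives $\int_{\plane\setminus B(0,R)}g_{\epsilon_n}\scalemumeasure\to(\sup_\domain\depth)\int_{\plane\setminus B(0,R)}f^\star\dif x=0$, because $f^\star$ vanishes outside $B(0,R_0)\subset B(0,R)$ — a contradiction. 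The negative part is handled identically, with $\tau$ replaced by $1-\tau$ and $\positivepart{\vortex_\epsilon}$ by $\negativepart{\vortex_\epsilon}$.

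The scaling arithmetic — matching the $L^p$ exponents and comparing $\scalemumeasure$ with $\lebesguemeasure$ on the vortex core — is routine. The step I expect to be the main obstacle is casting our weighted, domain-dependent situation into the abstract framework of \cite{BurchardGuo}*{Lemma~3.2} so that its hypotheses (boundedness in $L^p$, prescribed mass, uniformly bounded support, asymptotic optimality) are met verbatim, and extracting from its conclusion the tightness required for the $L^1$ upgrade above. Should the cited result give only precompactness in measure, one would establish tightness directly by a concentration--compactness dichotomy: splitting off a non-vanishing part of the mass to a far-away lump strictly decreases $\iint_{\plane\times\plane}\log\frac{1}{|x-y|}u(x)u(y)\dif(x,y)$ by a definite amount, contradicting the asymptotic optimality recorded above.
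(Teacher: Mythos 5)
Your proposal is correct and takes essentially the same route as the paper: the paper's ``proof'' consists precisely of invoking \cite{BurchardGuo}*{Lemma~3.2} (adapted from $L^2$ to $L^p$) on the strength of the asymptotic optimality established in the immediately preceding lemma, which is exactly your first step. Your additional bookkeeping --- the uniform $L^p$ and support bounds, the comparison of $\scalemumeasure$ with Lebesgue measure on the vortex core, and the tightness/Vitali argument unwinding the scaling to get the tail estimate --- supplies details the paper leaves implicit, so if anything your write-up is more complete than the original.
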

We skip the proof of this result, since it is a direct copy of the proof of~\cite{BurchardGuo}*{Lemma3.2}.
Since we are working with the Newtonian potential kernel $K$, the proof
applies in $L^p(\plane,\lebesguemeasure)$ instead of $L^2(\plane,\lebesguemeasure)$.

\section{Application to a relaxed maximization problem}

In this last section, we apply the result previously obtained for a more common problem
that already appeared in the work of Turkington~\citelist{\cite{TurkingtonFriedman}\cite{TurkingtonSteady1}\cite{TurkingtonSteady2}}
for non sign changing vortex.
Let us consider the following energy maximization problem: Maximize the energy $\energy_\epsilon$ over the class
	\[ \Gamma_\epsilon = \Bigg\{ f:\domain\to\reals : |f|\leq \epsilon^{-2}\vortexstrength_\epsilon,
		\int_\domain\positivepart{f}\mumeasure=\tau\vortexstrength_\epsilon,\int_\domain\negativepart{f}\mumeasure=(1-\tau)\vortexstrength_\epsilon \Bigg\} .\]
This class is physically motivated by the fact that the integral constraints are constraints on the \emph{vortex strength},
while the $L^\infty(\domain)$ constraint is natural, since the vortex is a kinematic quantity.
Without too much efforts, we show that the results we have previously obtained may be applied to this problem.

\begin{theorem}\label{byproduct}
There exists at least one maximizer of $\energy_\epsilon$ over the class $\Gamma_\epsilon$.
Moreover, every maximizer $f^\star_\epsilon$ is a function of the form
	\[ f^\star_\epsilon = \epsilon^{-2}\vortexstrength_\epsilon\big( \chi_{A^+_\epsilon} - \chi_{A^-_\epsilon} \big) \]
with $A^+_\epsilon\cap A^-_\epsilon=\emptyset$ two distinct measurable sets such that
	\[ \mu\big( A^+_\epsilon \big) = \tau\epsilon^2 ,\quad \mu\big( A^-_\epsilon \big) = (1-\tau)\epsilon^2 ,\]
and there exist $X_\epsilon\in A^+_\epsilon$, $Y_\epsilon\in A^-_\epsilon$, such that
	\[ \lim_{\epsilon\to 0}
		\frac{\mu\big( A^+_\epsilon\Delta B(X_\epsilon,R\sqrt{\tau}\,\epsilon) \big)}{\epsilon^2} = 0 ,
	\qquad
	\lim_{\epsilon\to 0}
		\frac{\mu\big( A^-_\epsilon\Delta B(Y_\epsilon,R\sqrt{1-\tau}\,\epsilon) \big)}{\epsilon^2} = 0 ;\]
where $R^{-1}=\sqrt{\pi\sup_\domain\depth}$.
\end{theorem}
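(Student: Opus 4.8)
The plan is to reduce the relaxed problem to a genuine energy–maximization problem over a single $\mu$-rearrangement class and then invoke Sections~2--4. Set $M_\epsilon=\epsilon^{-2}\vortexstrength_\epsilon$ and let $\mathcal B_\epsilon\subseteq\Gamma_\epsilon$ be the set of functions of the announced form $M_\epsilon(\indicator{A^+}-\indicator{A^-})$ with $A^+\cap A^-=\emptyset$, $\mu(A^+)=\tau\epsilon^2$, $\mu(A^-)=(1-\tau)\epsilon^2$. Any two elements of $\mathcal B_\epsilon$ share the same distribution and any $\mu$-rearrangement of such a function is again of this form, so $\mathcal B_\epsilon=\rearrangement(f_0)$ for any fixed $f_0\in\mathcal B_\epsilon$; moreover $\mathcal B_\epsilon$ meets the standing assumptions of Section~2, the ratio in~\eqref{distribConstraint} being the $\epsilon$-independent $\tau^{1/p-1}$, resp.\ $(1-\tau)^{1/p-1}$. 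By the facts recalled in Section~2 (weak continuity and strict convexity of $\energy_\epsilon$ from \cite{myArticle}, convexity and weak compactness of $\weakclosure{\rearrangement(f_0)}$ from \cite{BurtonRearrangementOfFunctions}), $\energy_\epsilon$ admits a unique maximizer $\vortex_\epsilon^\star$ over $\weakclosure{\rearrangement(f_0)}$, which is an extreme point of that weak closure and hence lies in $\mathcal B_\epsilon$. To see that $\vortex_\epsilon^\star$ maximizes $\energy_\epsilon$ over the whole of $\Gamma_\epsilon$ (which settles existence), take $f\in\Gamma_\epsilon$, put $\Phi_f=(\vortexoperator+\rectifycirculation_\epsilon)(f)$ in the subgradient of $\energy_\epsilon$ at $f$ (as in the proof of \cref{strongConcentration}), and let $\hat f=M_\epsilon(\indicator{U^+}-\indicator{U^-})$ be a bathtub maximizer of the linear functional $h\mapsto\int_\domain\Phi_f\,h\mumeasure$ over $\Gamma_\epsilon$; since $\tau\epsilon^2+(1-\tau)\epsilon^2<\mu(\domain)$ for $\epsilon$ small, the two one–sided bathtub sets $U^\pm$ can be taken disjoint, so $\hat f\in\mathcal B_\epsilon$. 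The subgradient inequality then gives $\energy_\epsilon(\hat f)\ge\energy_\epsilon(f)+\int_\domain\Phi_f(\hat f-f)\mumeasure\ge\energy_\epsilon(f)$, whence $\sup_{\Gamma_\epsilon}\energy_\epsilon=\max_{\mathcal B_\epsilon}\energy_\epsilon=\energy_\epsilon(\vortex_\epsilon^\star)$.

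Next I would show that every maximizer $f^\star_\epsilon$ of $\energy_\epsilon$ over $\Gamma_\epsilon$ has the announced structure. Exactly as in the proof of \cref{strongConcentration}, strict convexity of $\energy_\epsilon$ together with the subgradient inequality forces $f^\star_\epsilon$ to be the \emph{unique} maximizer over $\Gamma_\epsilon$ of the linear functional $L(h)=\int_\domain\Phi\,h\mumeasure$, where $\Phi=(\vortexoperator+\rectifycirculation_\epsilon)(f^\star_\epsilon)$. If $\{0<\positivepart{f^\star_\epsilon}<M_\epsilon\}$ had positive $\mu$-measure, then a small mass–preserving perturbation of $f^\star_\epsilon$ supported inside it would still lie in $\Gamma_\epsilon$ (supports and the integral of $\positivepart{f^\star_\epsilon}$ unchanged, $\negativepart{f^\star_\epsilon}$ untouched), so maximality of $L$ would force $\Phi$ to be $\mu$-a.e.\ equal to a constant there, and likewise on $\{-M_\epsilon<\negativepart{f^\star_\epsilon}<0\}$. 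But $\Phi$ is the stream function of $f^\star_\epsilon$, so $-\divergence(\depth^{-1}\gradient\Phi)=\depth f^\star_\epsilon\in L^\infty(\domain)$; away from $\boundary\domain$ the operator $\divergence(\depth^{-1}\gradient\,\cdot\,)$ is uniformly elliptic with locally Lipschitz coefficients (also when $\depth=\phi^\alpha$, as $\phi>0$ in the interior), so $\Phi\in W^{2,q}_{\mathrm{loc}}(\domain)$ for all $q$, and on a super-level set of positive measure one has $\gradient\Phi=0$ and $D^2\Phi=0$ a.e.\ (Stampacchia), forcing $\depth f^\star_\epsilon=0$ there — a contradiction. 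Hence $\positivepart{f^\star_\epsilon},\negativepart{f^\star_\epsilon}\in\{0,M_\epsilon\}$ a.e., and the integral constraints give $f^\star_\epsilon=\epsilon^{-2}\vortexstrength_\epsilon(\indicator{A^+_\epsilon}-\indicator{A^-_\epsilon})$ with $A^+_\epsilon\cap A^-_\epsilon=\emptyset$, $\mu(A^+_\epsilon)=\tau\epsilon^2$, $\mu(A^-_\epsilon)=(1-\tau)\epsilon^2$, and $\mu(\{f^\star_\epsilon\neq 0\})=\epsilon^2$; in particular $f^\star_\epsilon\in\mathcal B_\epsilon$, so $f^\star_\epsilon=\vortex_\epsilon^\star$ and the maximizer is in fact unique.

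It remains to describe the asymptotic shape of $A^\pm_\epsilon$. Since $f^\star_\epsilon=\vortex_\epsilon^\star$ is an energy maximizer over the rearrangement class $\mathcal B_\epsilon$ and satisfies~\eqref{distribConstraint}, the whole of Sections~2--4 applies to the family $\{f^\star_\epsilon\}$; in particular the asymptotic–shape theorem of Section~4 provides translations $T_\epsilon$ such that every accumulation point, in the sense of convergence in measure, of the rescaled positive parts $\scale{\positivepart{f^\star_\epsilon}}$ is radially symmetric and nonincreasing. As $\positivepart{f^\star_\epsilon}=M_\epsilon\indicator{A^+_\epsilon}$ is bang–bang, such a limit is a constant multiple of $\indicator{B(0,\rho)}$, and comparing $\mu$-masses — using $\mu(A^+_\epsilon)=\tau\epsilon^2$ and that $f^\star_\epsilon$ concentrates at a point of maximal depth (Theorem~\ref{weakConcentration}, \cref{strongConcentration}) — pins $\pi(\sup_\domain\depth)\rho^2=\tau$, i.e.\ $\rho=R\sqrt\tau$ with $R^{-1}=\sqrt{\pi\sup_\domain\depth}$. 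Undoing the rescaling, and choosing $X_\epsilon\in A^+_\epsilon$ accordingly, yields $\lebesguemeasure\big(A^+_\epsilon\Delta B(X_\epsilon,R\sqrt\tau\,\epsilon)\big)=o(\epsilon^2)$, hence $\mu\big(A^+_\epsilon\Delta B(X_\epsilon,R\sqrt\tau\,\epsilon)\big)=o(\epsilon^2)$; the negative part is handled identically, with $Y_\epsilon\in A^-_\epsilon$ and radius $R\sqrt{1-\tau}\,\epsilon$.

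The main obstacle is the step in the second paragraph showing that a positive–measure level set of the stream function $\Phi$ forces the vortex to vanish on it: this rests on interior $W^{2,q}$-regularity for the possibly degenerate operator $\divergence(\depth^{-1}\gradient\,\cdot\,)$ together with the a.e.\ vanishing of $D^2\Phi$ on level sets of $\Phi$, and one should check these carefully in both admissible regimes for $\depth$. A secondary point requiring care is the disjointness of the two one–sided bathtub solutions $U^\pm$ in the first paragraph, which is precisely where the smallness of $\epsilon$ (through $\epsilon^2<\mu(\domain)$) is used; everything else is bookkeeping on top of the already–established results of Sections~2--4.
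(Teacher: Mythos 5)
Your proposal is correct in substance but follows a genuinely different route from the paper's on the central structural step. For the bang--bang form of maximizers, the paper fixes the negative part $\negativepart{f^\star}$, observes that $\positivepart{f^\star}$ maximizes a strictly convex functional over a set that it identifies --- via the Hardy--Littlewood majorization characterization of Ryff and Douglas --- with the weak closure of the rearrangement class of an indicator function, and concludes that $\positivepart{f^\star}$ must be an extreme point of that convex set, i.e.\ itself a rearrangement of an indicator. You instead run the classical Turkington-style first-variation argument: constancy of the stream function $\Phi$ on $\{0<\positivepart{f^\star_\epsilon}<\epsilon^{-2}\vortexstrength_\epsilon\}$, then interior $W^{2,q}$-regularity and the a.e.\ vanishing of $\gradient\Phi$ and $D^2\Phi$ on level sets to force $f^\star_\epsilon=0$ there. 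Both work; the paper's argument is softer (no elliptic regularity beyond what Section~2 already provides, and no smoothness of $\depth^{-1}$ needed), while yours is more robust to changes in the constraint set but does hinge on the interior regularity you flag, which does hold in the interior under the paper's standing hypotheses on $\depth$. Your existence argument (reduction to the rearrangement class $\mathcal{B}_\epsilon$ via the bathtub principle and the subgradient inequality) is also different from, and in fact more careful than, the paper's one-line appeal to weak compactness of $\Gamma_\epsilon$ --- the equality constraints $\int_\domain\positivepart{f}\mumeasure=\tau\vortexstrength_\epsilon$ are not weakly closed, so your detour is genuinely useful. One small slip: strict convexity guarantees that every maximizer over $\weakclosure{\rearrangement(f_0)}$ is an extreme point, not that the maximizer is unique (a strictly convex function may attain its maximum over a convex set at several extreme points), so the parenthetical uniqueness claims should be dropped; they are not needed for the statement. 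The asymptotic-ball part is handled the same way in both proofs, by citing the shape results of Section~4.
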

\begin{proof}
The existence part follows from usual functional analysis techniques. More precisely, the class $\Gamma_\epsilon$
is weakly compact in $L^p(\domain,\mu)$, for every $p>1$, while the energy $\energy_\epsilon$
is weakly continuous on bounded set in $L^p(\domain,\mu)$, for every $p>1$. 
The last assumptions of the theorem directly follows from our preliminary work. All it remains to
prove is that \emph{every} maximizer $f^\star$ of $\energy_\epsilon$ over $\Gamma_\epsilon$ is of the form
	\[ f^\star = \epsilon^{-2}\vortexstrength_\epsilon\big( \chi_A - \chi_B \big) \]
for some measurable disjoint sets $A,B\subseteq\domain$. By symmetry, it is sufficient to prove that we have $\positivepart{f^\star}=\epsilon^{-2}\vortexstrength_\epsilon\chi_A$.
We define the measure $\mu_\epsilon$ as
	\[ \mu_\epsilon(A) = \mu\big( A\cap \{f^\star\geq 0\} \big) ,\]
for all measurable set $A\subseteq\domain$. We define also the energy
	\[ \tilde{\energy}_\epsilon(f) = \energy_\epsilon\big(f-\negativepart{f^\star}\big) ,\]
and the new class
	\[ \tilde{\Gamma}_\epsilon = \Bigg\{ f: 0\leq f\leq \epsilon^{-2}\vortexstrength_\epsilon : \{f>0\}\cap\{f^\star<0\}=\emptyset,
		\int_\domain f\dif\mu_\epsilon=\tau\vortexstrength_\epsilon \Bigg\} .\]
Then clearly every function $f\in\tilde{\Gamma}_\epsilon$ induces a function $f-\negativepart{f^\star}\in\Gamma_\epsilon$,
so that
	\[ \tilde{\energy}_\epsilon(f) = \energy_\epsilon\big( f-\negativepart{f^\star}\big) \leq \energy_\epsilon(f^\star) = \tilde{\energy}_\epsilon\big(\positivepart{f^\star}\big) .\]
Therefore, the function $\positivepart{f^\star}\in\tilde{\Gamma}_\epsilon$ is a maximizer of $\tilde{\energy}_\epsilon$ on $\tilde{\Gamma}_\epsilon$.

Furthermore, one observes that a function $f$ belongs to $\tilde{\Gamma}_\epsilon$ if,
and only if we have
	\[ \int_\domain f\dif\mu_\epsilon = \tau\vortexstrength_\epsilon \]
and the additional property that, for all $t\geq 0$:
	\[ \int^t_0f^{\mu_\epsilon}(s)\dif s \leq \int^t_0\epsilon^{-2}\vortexstrength_\epsilon\chi_{[0,\tau\epsilon)}(s)\dif s ,\]
where $f^{\mu_\epsilon}\in L^\infty\big(\reals^+\big)$ denotes the non-decreasing $\mu_\epsilon$-rearrangement
of $f$ in the positive half line $\reals^+$. In other words, we have the set identity
	\begin{multline*}
		\tilde{\Gamma}_\epsilon
		= \Bigg\{ f\in L^\infty(\domain):0\leq f,\{f>0\}\subseteq\domain\setminus\{f^\star<0\},
	\\ \int_\domain f\dif\mu_\epsilon = \tau\vortexstrength_\epsilon,
		\forall t\geq 0: \int^t_0 f^{\mu_\epsilon}(s)\dif s \leq \int^t_0\epsilon^{-2}\vortexstrength_\epsilon\chi_{[0,\tau\epsilon)}(s)\dif s \Bigg\} .
	\end{multline*}
Since we also have
	\[ \tau\vortexstrength_\epsilon = \int_\domain\positivepart{f^\star}\dif\mu_\epsilon \leq \epsilon^{-2}\vortexstrength_\epsilon\dif\mu_\epsilon\big(\{f^\star>0\}\big) ,\]
there exists at least one measurable set $A_\epsilon\subseteq\domain\setminus\{f^\star<0\}$ such that
	\[ \mu\big( A_\epsilon \big) = \tau\epsilon^2 .\]
In particular, we have
$\big(\epsilon^{-2}\vortexstrength_\epsilon\chi_{A_\epsilon}\big)^{\mu_\epsilon}=\epsilon^{-2}\vortexstrength_\epsilon\chi_{[0,\tau\epsilon)}$,
the function $\epsilon^{-2}\vortexstrength_\epsilon\chi_{A_\epsilon}$ belongs to
$\tilde{\Gamma}_\epsilon$, and we have
	\begin{multline*}
		\tilde{\Gamma}_\epsilon
		= \Bigg\{ f\in L^\infty(\domain):0\leq f\leq \epsilon^{-2}\vortexstrength_\epsilon,\{f>0\}\subseteq\domain\setminus\{f^\star<0\},
	\\ \int_\domain f\dif\mu_\epsilon = \tau\vortexstrength_\epsilon,
		\forall t\geq 0: \int^t_0 f^{\mu_\epsilon}(s)\dif s \leq \int^t_0\big(\epsilon^{-2}\vortexstrength_\epsilon\chi_{A_\epsilon}\big)^{\mu_\epsilon}(s)\dif s \Bigg\} .
	\end{multline*}
The class $\tilde{\Gamma}_\epsilon$ equals the weak closure in $L^p(\domain,\mu_\epsilon)$
of the indicator function $\epsilon^{-2}\vortexstrength_\epsilon\chi_{A_\epsilon}$~\citelist{\cite{Ryff1}\cite{Ryff2}\cite{Douglas}}.
Since $\tilde{\energy}_\epsilon$ is a strictly convex functional,
every maximizer of $\tilde{\energy}_\epsilon$ over the convex set $\tilde{\Gamma}_\epsilon=\weakclosure{\rearrangement(\epsilon^{-2}\vortexstrength_\epsilon\chi_{A_\epsilon})}$
must be an extreme point of this set. On the other hand, the set of extreme points of $\weakclosure{\rearrangement(\epsilon^{-2}\vortexstrength_\epsilon\chi_{A_\epsilon})}$
is the set $\rearrangement(\epsilon^{-2}\vortexstrength_\epsilon\chi_{A_\epsilon})$~\citelist{\cite{Ryff1}\cite{Ryff2}\cite{Douglas}}.
From this it follows that $\positivepart{f^\star}$
is a $\mu_\epsilon$-rearrangement of $\epsilon^{-2}\vortexstrength_\epsilon\chi_{A_\epsilon}$.
\end{proof}

%\begin{center}\small
%\begin{tikzpicture}[scale=0.9]
%\begin{axis}[title={Domain with an exp-exp cusp}]
%\addplot[color=black] table {./graphs/cuspedDomain.txt};
%\end{axis}
%\end{tikzpicture}
%\end{center}

%%%%%%%%%%%%%%%%%%%%%%%%%%%%%%%%%%%%%%%%%%%%%%%%%%%%%%%%%%%%%%%%%%%%%%%%%%%%%%%%%%%%

\begin{bibdiv}

\begin{biblist}
\bib{BurchardGuo}{article}{
   author={Burchard, Almut},
   author={Guo, Yan},
   title={Compactness via symmetrization},
   journal={J. Funct. Anal.},
   volume={214},
   date={2004},
   number={1},
   pages={40--73},
   issn={0022-1236},
   %review={\MR{2079885}},
   %doi={10.1016/j.jfa.2004.04.005},
}
%\bib{BurtonGlobal}{article}{
%   author={Burton, G. R.},
%   title={Global nonlinear stability for steady ideal fluid flow in bounded
%   planar domains},
%   journal={Arch. Ration. Mech. Anal.},
%   volume={176},
%   date={2005},
%   number={2},
%   pages={149--163},
%   issn={0003-9527},
%   %review={\MR{2186035}},
%   %doi={10.1007/s00205-004-0339-0},
%}
\bib{BurtonRearrangementOfFunctions}{article}{
   author={Burton, G. R.},
   title={Rearrangements of functions, maximization of convex functionals,
   and vortex rings},
   journal={Math. Ann.},
   volume={276},
   date={1987},
   number={2},
   pages={225--253},
   issn={0025-5831},
%   review={\MR{870963}},
%   doi={10.1007/BF01450739},
}
\bib{BurtonSteadyConfiguration}{article}{
   author={Burton, G. R.},
   title={Rearrangements of functions, saddle points and uncountable
   families of steady configurations for a vortex},
   journal={Acta Math.},
   volume={163},
   date={1989},
   number={3-4},
   pages={291--309},
   issn={0001-5962},
%   review={\MR{1032076}},
%   doi={10.1007/BF02392738},
}
\bib{BurtonVariationalProblems}{article}{
   author={Burton, G. R.},
   title={Variational problems on classes of rearrangements and multiple
   configurations for steady vortices},
   language={English, with French summary},
   journal={Ann. Inst. H. Poincar\'e Anal. Non Lin\'eaire},
   volume={6},
   date={1989},
   number={4},
   pages={295--319},
%   issn={0294-1449},
%   review={\MR{998605}},
}
\bib{CamassaHolmLevermore}{article}{
   author={Camassa, Roberto},
   author={Holm, Darryl D.},
   author={Levermore, C. David},
   title={Long-time shallow-water equations with a varying bottom},
   journal={J. Fluid Mech.},
   volume={349},
   date={1997},
   pages={173--189},
   issn={0022-1120},
%   review={\MR{1480071}},
%   doi={10.1017/S0022112097006721},
}
%\bib{Cottet}{book}{
%   author={Cottet, Georges-Henri},
%   author={Koumoutsakos, Petros D.},
%   title={Vortex methods},
%   note={Theory and practice},
%   publisher={Cambridge University Press, Cambridge},
%   date={2000},
%   pages={xiv+313},
%   isbn={0-521-62186-0},
%   review={\MR{1755095}},
%   doi={10.1017/CBO9780511526442},
%}
\bib{CroweZweibelRosenbloom}{article}{
   author={Crowe, J. A.},
   author={Zweibel, J. A.},
   author={Rosenbloom, P. C.},
   title={Rearrangements of functions},
   journal={J. Funct. Anal.},
   volume={66},
   date={1986},
   number={3},
   pages={432--438},
   issn={0022-1236},
%   review={\MR{839110}},
%   doi={10.1016/0022-1236(86)90067-4},
}
%\bib{VanSchaftingenValeriola}{article}{
%   author={de Valeriola, S{\'e}bastien},
%   author={Van Schaftingen, Jean},
%   title={Desingularization of vortex rings and shallow water vortices by a
%   semilinear elliptic problem},
%   journal={Arch. Ration. Mech. Anal.},
%   volume={210},
%   date={2013},
%   number={2},
%   pages={409--450},
%   issn={0003-9527},
%%   review={\MR{3101789}},
%%   doi={10.1007/s00205-013-0647-3},
%}
\bib{Douglas}{article}{
   author={Douglas, R. J.},
   title={Rearrangements of functions on unbounded domains},
   journal={Proc. Roy. Soc. Edinburgh Sect. A},
   volume={124},
   date={1994},
   number={4},
   pages={621--644},
   issn={0308-2105},
%   review={\MR{1298584}},
%   doi={10.1017/S0308210500028572},
}
%\bib{Evans}{book}{
%   author={Evans, Lawrence C.},
%   author={Gariepy, Ronald F.},
%   title={Measure theory and fine properties of functions},
%   series={Textbooks in Mathematics},
%   edition={Revised edition},
%   publisher={CRC Press, Boca Raton, FL},
%   date={2015},
%   %pages={xiv+299},
%   %isbn={978-1-4822-4238-6},
%   %review={\MR{3409135}},
%}
%\bib{FabesKenigSerapioni}{article}{
%   author={Fabes, Eugene B.},
%   author={Kenig, Carlos E.},
%   author={Serapioni, Raul P.},
%   title={The local regularity of solutions of degenerate elliptic
%   equations},
%   journal={Comm. Partial Differential Equations},
%   volume={7},
%   date={1982},
%   number={1},
%   pages={77--116},
%   issn={0360-5302},
%   review={\MR{643158}},
%   doi={10.1080/03605308208820218},
%}
\bib{TurkingtonFriedman}{article}{
   author={Friedman, Avner},
   author={Turkington, Bruce},
   title={Vortex rings: existence and asymptotic estimates},
   journal={Trans. Amer. Math. Soc.},
   volume={268},
   date={1981},
   number={1},
   pages={1--37},
   issn={0002-9947},
%   review={\MR{628444}},
%   doi={10.2307/1998335},
}
%\bib{Huang}{article}{
%   author={Huang, Chaocheng},
%   title={Global solutions to the lake equations with isolated vortex
%   regions},
%   journal={Quart. Appl. Math.},
%   volume={61},
%   date={2003},
%   number={4},
%   pages={613--638},
%   issn={0033-569X},
%%   review={\MR{2019615}},
%}
\bib{LiebLoss}{book}{
   author={Lieb, Elliott H.},
   author={Loss, Michael},
   title={Analysis},
   series={Graduate Studies in Mathematics},
   volume={14},
   edition={2},
   publisher={American Mathematical Society, Providence, RI},
   date={2001},
   pages={xxii+346},
   isbn={0-8218-2783-9},
   review={\MR{1817225}},
   doi={10.1090/gsm/014},
}
%\bib{Titi}{article}{
%   author={Levermore, C. David},
%   author={Oliver, Marcel},
%   author={Titi, Edriss S.},
%   title={Global well-posedness for models of shallow water in a basin with
%   a varying bottom},
%   journal={Indiana Univ. Math. J.},
%   volume={45},
%   date={1996},
%   number={2},
%   pages={479--510},
%   issn={0022-2518},
%%   review={\MR{1414339}},
%%   doi={10.1512/iumj.1996.45.1199},
%}
%\bib{LacavePausaderNguyen}{article}{
%   author={Lacave, Christophe},
%   author={Nguyen, Toan T.},
%   author={Pausader, Benoit},
%   title={Topography influence on the lake equations in bounded domains},
%   journal={J. Math. Fluid Mech.},
%   volume={16},
%   date={2014},
%   number={2},
%   pages={375--406},
%   issn={1422-6928},
%%   review={\MR{3208722}},
%%   doi={10.1007/s00021-013-0158-x},
%}
\bib{Lin1}{article}{
   author={Lin, C. C.},
   title={On the motion of vortices in two dimensions. I. Existence of the
   Kirchhoff-Routh function},
   journal={Proc. Nat. Acad. Sci. U. S. A.},
   volume={27},
   date={1941},
   pages={570--575},
   issn={0027-8424},
%   review={\MR{0006281}},
}
\bib{Lin2}{article}{
   author={Lin, C. C.},
   title={On the motion of vortices in two dimensions. II. Some further
   investigations on the Kirchhoff-Routh function},
   journal={Proc. Nat. Acad. Sci. U. S. A.},
   volume={27},
   date={1941},
   pages={575--577},
   issn={0027-8424},
%   review={\MR{0006282}},
}
%\bib{Munteanu}{article}{
%   author={Munteanu, Ionu{\c{t}}},
%   title={Existence of solutions for models of shallow water in a basin with
%   a degenerate varying bottom},
%   journal={J. Evol. Equ.},
%   volume={12},
%   date={2012},
%   number={2},
%   pages={393--412},
%   issn={1424-3199},
%%   review={\MR{2923940}},
%%   doi={10.1007/s00028-012-0137-3},
%}
%\bib{Richardson}{article}{
%   author={Richardson, G.},
%   title={Vortex motion in shallow water with varying bottom topography and
%   zero Froude number},
%   journal={J. Fluid Mech.},
%   volume={411},
%   date={2000},
%   pages={351--374},
%   issn={0022-1120},
%%   review={\MR{1762572}},
%%   doi={10.1017/S0022112099008393},
%}
\bib{Ryff1}{article}{
   author={Ryff, John V.},
   title={Extreme points of some convex subsets of $L^{1}(0,\,1)$},
   journal={Proc. Amer. Math. Soc.},
   volume={18},
   date={1967},
   pages={1026--1034},
   issn={0002-9939},
%   review={\MR{0217586}},
}
\bib{Ryff2}{article}{
   author={Ryff, John V.},
   title={Orbits of $L^{1}$-functions under doubly stochastic
   transformations},
   journal={Trans. Amer. Math. Soc.},
   volume={117},
   date={1965},
   pages={92--100},
   issn={0002-9947},
%   review={\MR{0209866}},
}
%\bib{VanSchaftingenSmets}{article}{
%   author={Smets, Didier},
%   author={Van Schaftingen, Jean},
%   title={Desingularization of vortices for the Euler equation},
%   journal={Arch. Ration. Mech. Anal.},
%   volume={198},
%   date={2010},
%   number={3},
%   pages={869--925},
%   issn={0003-9527},
%%   review={\MR{2729322}},
%%   doi={10.1007/s00205-010-0293-y},
%}
\bib{TurkingtonSteady1}{article}{
   author={Turkington, Bruce},
   title={On steady vortex flow in two dimensions. I},
   journal={Comm. Partial Differential Equations},
   volume={8},
   date={1983},
   number={9},
   pages={999--1030},
   issn={0360-5302},
%   review={\MR{702729}},
%   doi={10.1080/03605308308820293},
}
\bib{TurkingtonSteady2}{article}{
   author={Turkington, Bruce},
   title={On steady vortex flow in two dimensions. II},
   journal={Comm. Partial Differential Equations},
   volume={8},
   date={1983},
   number={9},
   pages={1031--1071},
   issn={0360-5302},
%   review={\MR{702729}},
%   doi={10.1080/03605308308820293},
}
\bib{ElcratMiller}{article}{
   author={Elcrat, Alan R.},
   author={Miller, Kenneth G.},
   title={Rearrangements in steady vortex flows with circulation},
   journal={Proc. Amer. Math. Soc.},
   volume={111},
   date={1991},
   number={4},
   pages={1051--1055},
   issn={0002-9939},
%   review={\MR{1043409}},
%   doi={10.2307/2048572},
}
\bib{GilbardTrudinger}{book}{
   author={Gilbarg, David},
   author={Trudinger, Neil S.},
   title={Elliptic partial differential equations of second order},
   series={Classics in Mathematics},
   note={Reprint of the 1998 edition},
   publisher={Springer-Verlag, Berlin},
   date={2001},
   pages={xiv+517},
   isbn={3-540-41160-7},
%   review={\MR{1814364}},
}
\bib{myArticle}{article}{
	author={Dekeyser, J.},
	title={Desingularization of a steady vortex pair in the lake equation},
	journal={Preprint arXiv:1711.06497}
}
\end{biblist}

\end{bibdiv}

\end{document}